\numberwithin{equation}{section}
\newcommand{\nc}{\newcommand}
\nc{\parent}[1]{$[\![#1]\!]$}
\newtheorem{theorem}{Theorem}[section]
\newtheorem{lemma}{Lemma}[section]
\newtheorem{example}{Example}[section]
\newtheorem{corollary}{Corollary}[section]
\newtheorem{proposition}{Proposition}[section]
\newtheorem{remark}{Remark}[section]
\newtheorem{definition}{Definition}[section]
\newtheorem{assumption}{Assumption}[section]
\newcommand{\Implies}[2]{$\text{\ref{#1}}\implies\text{\ref{#2}}$}
\newenvironment{pf-main}{{\sc Proof of Theorem \ref{mainresult}.}\hspace{3mm}}{\qed}
\nc{\cadlag}{c\`{a}dl\`{a}g } \nc{\Ito}{It\^o} \nc{\ba}{\begin{array}}\nc{\elr}{(\ell,r)}
\nc{\ea}{\end{array}} \nc{\be}{\begin{equation}}
\nc{\ee}{\end{equation}} \nc{\bea}{\begin{eqnarray}}
\nc{\eea}{\end{eqnarray}} \nc{\bean}{\begin{eqnarray*}}
\nc{\eean}{\end{eqnarray*}} \nc{\bu}{\bullet} \nc{\nn}{\nonumber}
\nc{\cA}{{\mathcal A}} \nc{\cB}{{\mathcal B}} \nc{\cE}{\mathcal{E}}\nc{\cC}{{\mathcal
C}} \nc{\cD}{{\mathcal D}} \nc{\bbD}{\mathbb{D}}
\nc{\cG}{{\mathcal G}} \nc{\cF}{{\mathcal F}} \nc{\cS}{{\mathcal
S}} \nc{\cU}{{\mathcal U}} \nc{\cH}{{\mathcal H}}
\nc{\cK}{{\mathcal K}}\nc{\cL}{{\mathcal L}}  \nc{\cM}{{\mathcal
M}} \nc{\cO}{{\mathcal O}} \nc{\cP}{{\mathcal P}} \nc{\bfE}{\mathbf{E}}
\nc{\bbE}{\mathbb{E}} \nc{\tbA}{\tilde{\bbA}}\nc{\bbA}{\mathbb{A}}\nc{\bbF}{\mathbb{F}}
\nc{\bbEQ}{\mathbb{E}_{\mathbb{Q}}} \nc{\eps}{\varepsilon}
\nc{\bbEP}{\mathbb{E}_{\mathbb{P}}}\nc{\bbL}{\mathbb{L}}
\nc{\what}{\widehat} \nc{\bbP}{\mathbb{P}} \nc{\bbQ}{\mathbb{Q}}
\nc{\del}{\partial} \nc{\Om}{\Omega} \nc{\om}{\omega}
\nc{\bbR}{\mathbb{R}} \nc{\bbN}{\mathbb{N}} \nc{\fps}{$(\Om, \cF,
(\cF_t)_{t\geq 0}, \bbP)$} \nc{\bbC}{\mathbb{C}}
\nc{\bfr}{\begin{flushright}} \nc{\efr}{\end{flushright}}
\nc{\dXt}{\Delta X_{t}} \nc{\dXs}{\Delta X_{s}}
\nc{\bs}{\blacksquare} \nc{\dX}{\Delta X} \nc{\dY}{\Delta Y}
\nc{\dnkx}{\left(X(T^{n}_{k})-X(T^{n}_{k-1})\right)}
\nc{\esssup}{\mathrm{ess}\mbox{ }\mathrm{sup}}
\nc{\essinf}{\mathrm{ess}\mbox{ } \mathrm{inf}}
\nc{\dhats}{\widehat{\delta_s}} \nc{\half} {\frac{1}{2}}
\nc{\ol}{\overline}
\def\rar{\rightarrow}
\nc{\chf}{\mbox{$\mathbf1$}}
\begin{document}

\title[Positive subharmonic functions and optimal stopping]{Integral representation of subharmonic functions and optimal stopping with random discounting}
\author{Umut \c{C}et\.in}
\address{Department of Statistics, London School of Economics and Political Science, 10 Houghton st, London, WC2A 2AE, UK}
\email{u.cetin@lse.ac.uk}
\date{\today}
\begin{abstract}
An integral representation result for strictly positive subharmonic functions of a one-dimensional regular diffusion is established. More precisely, any such function can be written as a linear combination of an increasing and a decreasing subharmonic function that solve an integral equation 
\[
g(x)=a  + \int v(x,y)\mu_A(dy) + \kappa s(x),
\] 
where $a>0$, $\kappa \in \bbR$, $s$ is a scale function of the diffusion, $\mu_A$ is a Radon measure, and $v$ is a kernel that is explicitly determined by the scale function. This integral equation in turn allows one construct a pair $(g,A)$ such that $g$ is a subharmonic function, $A$ is a continuous additive functional with Revuz measure $\mu_A$ and $g(X)\exp(-A)$ is a local martingale. The changes of measures associated with such pairs are studied and shown to modify the long term behaviour of the original diffusion process to exhibit transience. Theory is illustrated via examples that in particular contain a sequence of measure transformations that render the diffusion irregular in the limit by breaking the state space into distinct regions with soft and hard borders. Finally, the theory is applied to find an ``explicit'' solution to an optimal stopping problem with random discounting. 
\end{abstract}
\maketitle

\section{Introduction}
One of the fundamental results in the potential theory of Markov processes is the Riesz representation of an excessive (non-negative superharmonic) function as the sum of a harmonic function and the potential of a measure (see, e.g., Section VI.2 in \cite{BG}, \cite{Duncan71} and \cite{ChungRao80} for proofs under various assumptions). In the particular setting of a regular transient one-dimensional diffusion this amounts to a finite excessive function $f$ having the following representation:
\[
f(x)=\int u(x,y)\mu(dy)+ h(x),
\]
where $h$ is a harmonic function, $u$ is the potential density, and $\mu$ is a Borel measure.

On the other hand, analogous representation results for non-negative subharmonic functions of a given Markov processes do not seem to exist in a general form. If the Markov process is transient and the subharmonic function $g$ is bounded by $K$, one can obtain a representation using the available theory for the excessive function $K-g$. However, this approach will fail when $g$ is unbounded or the Markov process is recurrent, which implies all excessive functions are constant. Non-negative subharmonic functions are also known as `defective' functions (see p.31 of Dellacherie and Meyer \cite{DM-C}) and play an important role in Rost's solution to the Skorokhod embedding problem \cite{rost_stopping71}. Despite their abundance relative to excessive functions and their use in the potential theory, as Dellacherie and Meyer point out in \cite{DM-C}, ``It is quite depressing to admit that one knows almost nothing about defective functions.'' 

The main purpose of this paper is to fill a gap in this direction by establishing an integral representation for strictly positive subharmonic functions of a regular one-dimensional diffusion $X$ on a given interval $\elr$. It is shown in Theorem \ref{t:representation} that any such subharmonic function can be written as a linear combination of monotone subharmonic functions that are solutions of 
\be \label{e:intro:IE}
g(x)=g(c)+ \kappa (s(x)-s(c))+ \int_{\ell}^r v_c(x,y)g(y)\mu_A(dy),
\ee
where $\kappa\in \bbR$, $c \in \elr$, $s$ is a scale function, $\mu_A$ is a Radon measure, and  
\[
v_c(x,y)= s(x\vee y)-s(c\vee y)\; \mbox{or } s(c\wedge y)-s(x\wedge y).
\]
Conversely, solutions of (\ref{e:intro:IE}) can be used to construct strictly positive subharmonic functions. A family of integral equations for which solutions exist and can be used to generate {\em all} strictly positive subharmonic functions are studied in Section \ref{s:existence}.

To every strictly positive subharmonic function one can associate a {\em continuous additive functional} (CAF) $A$ such that $g(X)\exp(-A)$ is a local martingale. Section \ref{s:transform} studies changes of measures (or {\em path transformations}) for diffusions via such local martingales. It is in particular shown that after these path transformations  the diffusion process ends up transient, thereby providing  a complete counterpart to {\em recurrent transformations} introduced in \cite{rectr} via $h(X)\exp(B)$, where $h$ is excessive and $B$ is a CAF. 

The theory developed in Sections \ref{s:IE}-\ref{s:transform} is illustrated via Examples in Section \ref{s:examples}. In particular a  connection with the fundamental solutions of ordinary differential equations is made when the measure $\mu_A$ in (\ref{e:intro:IE}) is absolutely continuous with respect to the Lebesgue measure. Furthermore, in a series of remarkable examples a sequence of measure transformations via $g(X)\exp(-A)$, where $A$ is a mixture of local time processes, are shown to break the state space into several regions in the limit with particular borders. Roughly speaking, one can identify two different types of border behaviour in the limit: i) a {\em soft border} that allows a one-way passage between two neighbouring regions and ii) a {\em hard border} not allowing any interaction between the neighbours. Although the limiting process is no longer regular in the sense that it is not possible to reach some sets starting from some other sets, each path transformation results in a regular diffusion. However, they display an almost reflective behaviour at certain points in the interior of the state space that will later correspond to soft and hard borders in the limit.  Albeit tempting, this intriguing asymptotic behaviour  deserves a treatment in its own right and is therefore left to future work for a detailed analysis.

The path transformations introduced in Section \ref{s:transform} is used to solve a version of the optimal stopping problem with random discounting studied earlier by \cite{BL} and \cite{dayanikRD}. The problem of interest is to solve
\[
\sup_{\tau}E^x[e^{-A_{\tau}}f(X_{\tau})],
\]
where $f$ is a reward function and $A$ is a CAF. The method presented here is very similar at heart to the approach first proposed by Beibel and Lerche \cite{BL} and later developed in further generality by Dayan\i k in \cite{dayanikRD} for one-dimensional diffusions. The main idea in all these works - including the one presented here - is to find a subharmonic function $g$ so that $g(X)\exp(-A)$ is a local martingale. This allows for the reduction of the above optimal stopping problem to one without discounting. The main contribution of the approach used here is that the function $g$ can be determined explicitly by solving an integral equation - thanks to the representation of strictly positive subharmonic functions established in Sections \ref{s:IE} and \ref{s:existence} - whereas \cite{BL} and \cite{dayanikRD} only give an abstract definition in terms of the expectation of a multiplicative functional. 

The outline of the paper is as follows: Section \ref{s:prelim} introduces the set up and basic terminology that will be used in the paper. Section \ref{s:IE} gives a complete characterisation  and uniqueness of solutions of the integral equations that are solved by semi-bounded subharmonic functions. Section \ref{s:existence} establishes the existence of solutions for the integral equations of Section \ref{s:IE} and contains the representation result for general strictly positive subharmonic functions. The path transformations via subharmonic functions and their associated continuous additive functionals are studied in Section \ref{s:transform}. Finally, the theory is illustrated via some examples in Section \ref{s:examples} and applied to solve an optimal stopping problem in Section \ref{s:OS}.
\section{Preliminaries} \label{s:prelim}
Let $X=(\Om, \cF, \cF_t, \theta_t, P^x)$ be a regular  diffusion on $\bfE:=(l,r)$, where $ -\infty \leq l <r \leq \infty$, and $\mathcal{E}^u$ stands for the $\sigma$-algebra of universally measurable subsets of $\bfE$. If any of the boundaries are reached in finite time, the process is killed and sent to the cemetery state $\Delta$. As usual, $P^x$ is the law of the process initiated at point $x$ at $t=0$ and $\zeta$ is its lifetime, i.e. $\zeta :=\inf\{t>0:X_{t} =\Delta \}$. The transition semigroup of $X$ will be given by the kernels $(P_t)_{t \geq 0}$ on $(\bfE, \cE^u)$ and $(\theta_t)_{t \geq 0}$ is the shift operator.  The filtration $(\cF_t)_{t \geq 0}$ will denote the universal completion of the natural filtration of $X$, $\cF:=\vee_{t\geq 0}\cF_t$ and $\cF^u$ is the $\sigma$-algebra generated by the maps $f(X_t)$ with $t\geq 0$ and $f$ universally measurable\footnote{The reader is referred to Chapter 1 of  \cite{GTMP}  for the details.}.  Since $X$ is strong Markov by definition, $(\cF_t)_{t \geq 0}$ is right continuous (cf. Theorem 4 in Section 2.3 in \cite{ChungWalsh}). 

For $y \in (l,r)$ the stopping time  $T_y:=\inf\{t> 0: X_t=y\}$, where the infimum of an empty set  equals $\zeta$ by convention, is the first hitting time of $y$. Likewise $T_{ab}$ will denote the exit time from the interval $(a,b)$.  One can extend the notion of `hitting time' to each of the boundary points. To this end the random variable $T_{\ell}:\Om\to [0,\infty]$ is defined by
\be \label{e:Tell}
T_{\ell}(\om):=\left\{\ba{ll}
\zeta, & \mbox{ if } X_{\zeta-} \mbox{ exists and equals } \ell;\\
\infty, & \mbox{ otherwise;}\ea\right.\ee
and $T_r$ is defined similarly. $T_{\ell}$ and $T_r$ can be interpreted as the first hitting times of $\ell$ and $r$ although $X$ never equals them in the strict sense.

Such a one-dimensional diffusion is completely characterised by its  strictly increasing and continuous scale function $s$, speed measure $m$, and killing measure $k$.  The reader is referred to Chapter II of \cite{BorSal} for a concise treatment of these characteristics. In particular if the killing measure is null the infinitesimal generator $\cA$ of the diffusion is given by $\cA=\frac{d}{dm}\frac{d}{ds}$. 

\begin{remark}
It is worth emphasising here that no assumption of absolute continuity with respect to the Lebesgue measure  is made for the scale function or the speed measure. That is, $X$ is not necessarily the solution of a stochastic differential equation. A notable example is {\em the skew Brownian motion} (see \cite{skewBM81}).
\end{remark}

The concept of a {\em continuous additive functional} will be playing a key role throughout the paper.
\begin{definition}
	\label{d:caf} A family $A=(A_t)_{t \geq 0}$ of functions from $\Omega$ to $[0,\infty]$ is called a {\em continuous additive functional} of $X$ if
	\begin{itemize}
		\item[i)] Almost surely the mapping $t \mapsto A_t$ is nondecreasing, (finite) continuous on $[0,\zeta)$, and $A_t = A_{\zeta-}$ for $t \geq \zeta$.
		\item[ii)] $A_t \in \cF_t$ for each $t\geq 0$.
		\item[iii)] For each $t$ and $s$ $A_{t+s}=A_t + A_s \circ \theta_t$, a.s..
	\end{itemize}
\end{definition}
To each CAF $A$ one can associate a {\em Revuz measure} $\mu_A$ defined on the Borel subsets of $\elr$  by
\be \label{d:Revmes}
\int_{\elr}f(y)\mu_A(dy)=\lim_{t \rar 0}t^{-1}E^m[\int_0^tf(X_s)dA_s],
\ee
where $f$ is a non-negative Borel function. It must be noted that the Revuz measure depends on the choice of the speed measure. Moreover, in this one-dimensional setting $\mu_A$ will be a Radon measure\footnote{This is proved when $X$ is a Brownian motion in Proposition X.2.7 in \cite{RY}. However, the proof extends verbatim to all regular linear diffusions since the right continuity of the mapping $t\mapsto f(X_t)$ is equivalent to $g$ being finely continuous (cf. Theorem II.4.8 in \cite{BG}). Moreover, the fine topology induced by $X$ coincides with the standard metric topology on $\elr$ (see, e.g., Exercise 10.22 in \cite{GTMP}).}. 

One possible use of continuous additive functionals is the construction of a diffusion with non-zero killing measure from a diffusion with the same scale and speed but no killing a described in Paragraph 22 of Chapter II in \cite{BorSal}. For this reason and given the nature of questions addressed in this paper the following will be assumed throughout:
\begin{assumption} \label{a:nokill}
	The killing measure $k\equiv 0$. That is, there is no killing in the interior of the state space.
\end{assumption}
Under Assumption \ref{a:nokill} the potential density with respect to $m$ of a transient diffusion is given by
\[
u(x,y)= \lim_{a \rar \ell}\lim_{b \rar r} \frac{(s(x\wedge y)-s(a))(s(b)-s(x \vee y))}{s(b)-s(a)}, \qquad x, y \mbox{ in } \elr.
\]
In this case (see, e.g., \cite{Revuz70}) for any non-negative Borel function $f$
\be \label{e:potentialA}
E^x\left[\int_0^{\zeta}f(X_t)dA_t\right]=\int_{\ell}^{r}u(x,y)f(y)\mu_A(dy).
\ee
Moreover, the finiteness of $A_{\zeta}$, or equivalently $A_{\infty}$, is completely determined in terms of $s$ and $\mu_A$. The following is a direct consequence of Lemma A1.7 in \cite{AS98} (see \cite{MU} for an analogous result and a different technique of proof in case of $dA_t =f(X_t)dt$ for some non-negative measurable $f$).
\begin{theorem}\label{t:Afinite} Let $A$ be a CAF of $X$ with Revuz measure $\mu_A$. 
	\begin{enumerate}
		\item If $X$ is recurrent and $\mu_A(\bfE)>0$, then $A_{\zeta}=\infty$, a.s..
		\item If $s(\ell)>-\infty$, then on $[X_{\zeta-}=\ell]$, $A_{\zeta}=\infty$  a.s.  or $A_{\zeta}<\infty$  a.s. whether 
		\[
		\int_{\ell}^c(s(x)-s(\ell))\mu_A(dx)  
		\]
		is infinite or not for some $c \in \elr$.
		\item If $s(r)<\infty$, then on $[X_{\zeta-}=r]$ $A_{\zeta}=\infty$  a.s.  or $A_{\zeta}<\infty$  a.s.  whether 
		\[
		\int_c^r(s(r)-s(x))\mu_A(dx)
		\]
		is infinite or not for some $c \in \elr$.
	\end{enumerate}
\end{theorem}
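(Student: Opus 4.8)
The plan is to exploit the local-time representation of the additive functional. By the standard occupation-density representation of a CAF of a one-dimensional diffusion (see, e.g., Chapter II of \cite{BorSal}), one can write $A_t=\int_{\bfE}L_t^y\,\mu_A(dy)$, where $L_t^y$ is the diffusion local time at $y$ normalised consistently with the speed measure, so that in particular $A_\zeta=\int_{\bfE}L_\zeta^y\,\mu_A(dy)$ with $L_\zeta^y=L_\infty^y$ the total local time at $y$. Part (1) then follows immediately: if $X$ is recurrent then $L_\infty^y=\infty$ almost surely for every $y$, so $\mu_A(\bfE)>0$ forces $A_\zeta=\infty$ a.s. For the boundary statements I would treat (2); statement (3) is entirely symmetric under the exchange $\ell\leftrightarrow r$, $x\wedge y\leftrightarrow x\vee y$, and $s(y)-s(\ell)\leftrightarrow s(r)-s(y)$.

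Next I would localise near $\ell$. Fix $c\in\elr$ and split $A_\zeta=\int_\ell^c L_\zeta^y\,\mu_A(dy)+\int_c^r L_\zeta^y\,\mu_A(dy)$. The second term is the total mass of the CAF $A^c$ whose Revuz measure is $\chf_{[c,r)}\mu_A$. On the event $[X_{\zeta-}=\ell]$ the paths converge to $\ell$, so there is a last time $\gamma<\zeta$ at which $X\geq c$; after $\gamma$ the process stays in $(\ell,c)$, where $A^c$ does not grow, whence $\int_c^r L_\zeta^y\,\mu_A(dy)=A^c_\gamma<\infty$ a.s., since every CAF is finite on $[0,\zeta)$ by Definition \ref{d:caf}. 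Thus finiteness of $A_\zeta$ on $[X_{\zeta-}=\ell]$ is decided solely by the near-$\ell$ contribution $\int_\ell^c L_\zeta^y\,\mu_A(dy)$. To identify the relevant weight I use the potential density: taking the CAF to be local time at a single point in (\ref{e:potentialA}) gives $E^x[L_\zeta^y]=u(x,y)$, and the stated formula for $u$ yields, for $y<x$, $u(x,y)\asymp s(y)-s(\ell)$ as $y\to\ell$. Since a path visiting a level close to $\ell$ is overwhelmingly likely to exit at $\ell$ (quantitatively, $E^x[L_\zeta^y\,;\,X_{\zeta-}=\ell]=u(x,y)h_\ell(y)$ with $h_\ell(y)=(s(r)-s(y))/(s(r)-s(\ell))\to1$ as $y\to\ell$), one also gets $E^x[L_\zeta^y\,;\,X_{\zeta-}=\ell]\asymp s(y)-s(\ell)$. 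Integrating against $\mu_A$ gives $E^x[\int_\ell^c L_\zeta^y\,\mu_A(dy)\,;\,X_{\zeta-}=\ell]\asymp\int_\ell^c(s(y)-s(\ell))\mu_A(dy)$, so the near-$\ell$ contribution has finite conditional expectation, hence is finite a.s., whenever this integral converges.

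It remains to upgrade this to the a.s.\ dichotomy and to handle the divergent case, and this is where I expect the main difficulty. The idea is to descend to $\ell$ through a sequence $c=c_0>c_1>c_2>\cdots\downarrow\ell$ and to let $\tau_n$ be the last exit time of $X$ from level $c_n$; on $[X_{\zeta-}=\ell]$ one has $\tau_n\uparrow\zeta$ and $A_\zeta-A_{\tau_0}=\sum_{n\geq0}(A_{\tau_{n+1}}-A_{\tau_n})$. By the last-exit (Markov) decomposition at the times $\tau_n$, the increments are conditionally independent given $[X_{\zeta-}=\ell]$, the $n$-th being the mass $A$ accumulates during the final descent from $c_n$ to $c_{n+1}$, with conditional mean $\asymp\int_{c_{n+1}}^{c_n}(s(y)-s(\ell))\mu_A(dy)$ by the computation above. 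Kolmogorov's three-series theorem for sums of independent non-negative random variables then gives that the series converges a.s.\ or diverges a.s.\ according as $\sum_n\int_{c_{n+1}}^{c_n}(s(y)-s(\ell))\mu_A(dy)=\int_\ell^c(s(y)-s(\ell))\mu_A(dy)$ is finite or infinite, which is precisely the asserted dichotomy. Conditioning on $[X_{\zeta-}=\ell]$ is a Doob transform by $h_\ell$, and since $h_\ell$ is bounded and bounded away from $0$ as $x\to\ell$ it leaves the leading near-$\ell$ behaviour of the local-time weights unchanged; the principal technical obstacle is making the independence of the increments rigorous and supplying the truncation estimates that let the three-series criterion be read off from $\sum_n E[\,\cdot\,]$ rather than from the truncated means.
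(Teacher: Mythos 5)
A preliminary remark: the paper contains no proof of Theorem \ref{t:Afinite} at all --- it is quoted as a direct consequence of Lemma A1.7 in \cite{AS98} --- so your attempt must stand on its own as a from-scratch argument. A good part of it does: part (1) is fine; the localisation showing that $\int_c^r L^y_\zeta\,\mu_A(dy)$ is a.s.\ finite on $[X_{\zeta-}=\ell]$ (the CAF with Revuz measure $\chf_{[c,r)}\mu_A$ stops growing after the a.s.\ finite last exit time from $[c,r)$) is correct; and the identity $E^x[L^y_\zeta;X_{\zeta-}=\ell]=u(x,y)h_\ell(y)$ is true (cleanest via projecting $\chf_{[X_{\zeta-}=\ell]}$ under the $dL^y_t$-integral, using that $dL^y_t$ is carried by $\{X_t=y\}$, so the factor $h_\ell(X_t)$ freezes at $h_\ell(y)$), giving the implication ``$\int_\ell^c(s(y)-s(\ell))\mu_A(dy)<\infty \Rightarrow A_\zeta<\infty$ a.s.\ on $[X_{\zeta-}=\ell]$''. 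Incidentally, this direction together with part (1) is all that the paper ever uses of Theorem \ref{t:Afinite} (in Theorem \ref{t:semiinf_s} it is invoked precisely in the contrapositive form: $A_\zeta=\infty$ a.s.\ on $[\zeta=T_\ell]$ forces the integral to diverge).

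The genuine gap is the divergent half: $\int_\ell^c(s(y)-s(\ell))\mu_A(dy)=\infty \Rightarrow A_\zeta=\infty$ a.s.\ on $[X_{\zeta-}=\ell]$. Your plan --- increments $Y_n=A_{\tau_{n+1}}-A_{\tau_n}$ between successive last-exit times, independence, then Kolmogorov's three-series theorem --- cannot be completed as stated, because the three-series criterion for independent non-negative variables is phrased in terms of $\sum_n P(Y_n>1)$ and the truncated means $\sum_n E[Y_n\chf_{[Y_n\le 1]}]$, \emph{not} the raw means: divergence of $\sum_n E[Y_n]$ is perfectly compatible with $\sum_n Y_n<\infty$ a.s.\ (take $Y_n=n^2$ with probability $n^{-2}$ and $0$ otherwise). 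So the step you defer as a ``technical obstacle'' is the mathematical heart of the theorem, and it is missing. Closing it needs genuinely new input: (i) a rigorous last-exit decomposition --- the $\tau_n$ are not stopping times, so independence of the $Y_n$ is not a strong Markov consequence but requires an excursion-theoretic or $h$-transform-plus-time-reversal argument; and (ii) uniform distributional control of the increments, e.g.\ choosing $c_n$ so that $\int_{c_{n+1}}^{c_n}(s(y)-s(\ell))\mu_A(dy)\ge\delta$ for fixed $\delta>0$ and proving a uniform anti-concentration bound $P(Y_n\ge\eps)\ge\eps$ (via Paley--Zygmund with a second-moment/Kac formula estimate, or a Khasminskii-type exponential bound), after which the second Borel--Cantelli lemma finishes. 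Alternatively, one can bypass independence altogether, which is the standard route for dichotomies of this type: reduce to natural scale, time-change into Brownian motion killed at $s(\ell)$ (conditioning on $[X_{\zeta-}=\ell]$ being an $h$-transform that does not affect the near-$\ell$ asymptotics), note that by the Ray--Knight theorem the total local times near the absorbing boundary form a squared Bessel process of dimension $2$ in the space variable, so that $L^z_\zeta$ is, in law, $(s(z)-s(\ell))$ times a fixed exponential variable, and then invoke Jeulin's lemma, which converts exactly this identical-in-law scaling into the a.s.\ finite/infinite dichotomy without any independence.
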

\section{Integral equations for positive subharmonic functions}\label{s:IE}
\begin{definition}
A Borel function $g:\bfE \to \bbR$ is {\em subharmonic} if  for any  $x \in (\ell,r)$  $X^{T_{ab}}$ is a uniformly integrable $P^x$-submartingale whenever $\ell<a<b<r$. The class of non-negative subharmonic functions is denoted by $\cS$. Similarly, $\cS^+$ will be the set of elements of $\cS$ that are strictly positive on $(\ell,r)$. 
\end{definition}
Since $E^x[g(X_{T_{ab}})]=g(a)P^x(T_a<T_b)+g(b)P^x(T_b<T_a)=g(a)\frac{s(b)-s(x)}{s(b)-s(a)}+g(b)\frac{s(x)-s(a)}{s(b)-s(a)}$, one immediately deduces that $g$ must be a convex function of $s$ on the open interval $(\ell,r)$ to be subharmonic, which in particular entails that $g$ is absolutely continuous on $(\ell,r)$.
\begin{remark}
	In the sequel whenever a convex function is considered on some open interval $(a,b)$ it will be automatically extended to $[a,b]$ by continuity.
\end{remark}
By definition given any subharmonic function $g$, $g(X)$ is a local submartingale and, therefore, there exists a unique CAF $B$ with $B_0=0$ such that $g(X) -B$ is a $P^x$-local martingale for any $x \in (\ell,r)$ by a Markovian version of the Doob-Meyer decomposition (see Theorem 51.7 in \cite{GTMP}).  If $g$ is further supposed to be in $\cS^+$, then a simple integration by parts argument yields a unique $A$ such that $g(X)\exp(-A)$ is a local martingale, where $A$ is an adapted,  continuous and increasing process with $A_0=0$. Clearly, this $A$ is defined by its initial condition and $g(X_t)dA_t=dB_t$. 

The above argument gives a multiplicative decomposition for $g\in \cS^+$ as a product of a local martingale and an increasing process. The strict positivity is essential and the following summarises the above discussion.
\begin{theorem} \label{t:submvanish}
	Let $g \in \cS$, then $g$ is $s$-convex. Suppose further that $g$ is not identically $0$. There exists a CAF $A$  such that $g(X)\exp(-A)$ is a $P^x$-local martingale for every $ x\in (\ell,r)$ if and only if $g$ never vanishes on $(\ell,r)$.
\end{theorem}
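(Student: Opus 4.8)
The plan is to handle the three assertions in order, observing that the ``if'' direction largely repeats the construction sketched just before the statement, while the ``only if'' direction carries the new content.

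For the $s$-convexity I would simply record the computation already displayed: since the stopped process is a $P^x$-submartingale one has $g(x)\le E^x[g(X_{T_{ab}})]=g(a)\frac{s(b)-s(x)}{s(b)-s(a)}+g(b)\frac{s(x)-s(a)}{s(b)-s(a)}$ for all $\ell<a<x<b<r$, which is precisely the statement that $g$ is convex as a function of $s$; in particular $g$ is continuous on $\elr$. For the ``if'' direction, assume $g$ never vanishes, so that $g\in\cS^+$. Subharmonicity makes $g(X)$ a local submartingale on $[0,\zeta)$, and the Markovian Doob--Meyer decomposition (Theorem 51.7 in \cite{GTMP}) yields a unique CAF $B$ with $B_0=0$ for which $g(X)-B$ is a $P^x$-local martingale for every $x$. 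I would then set
\[
A_t:=\int_0^t\frac{dB_u}{g(X_u)},
\]
and check that $A$ is a genuine CAF: it is adapted and nondecreasing, and since $g$ is continuous and strictly positive, $1/g(X_{\cdot})$ is locally bounded on $[0,\zeta)$, so $A$ is finite and continuous there; additivity $A_{t+s}=A_t+A_s\circ\theta_t$ is inherited from that of $B$ through the identity $\int_t^{t+s}dB_u/g(X_u)=A_s\circ\theta_t$. A pathwise integration by parts, legitimate because $A$ has finite variation so no covariation term appears, gives $d(g(X_t)e^{-A_t})=e^{-A_t}(dB_t-g(X_t)dA_t)+e^{-A_t}dM_t=e^{-A_t}dM_t$, where $M$ is the local martingale part of $g(X)$; hence $g(X)e^{-A}$ is a $P^x$-local martingale for every $x$.

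For the ``only if'' direction I would argue by contradiction. Suppose $A$ is a CAF with $g(X)e^{-A}$ a $P^x$-local martingale for all $x$, yet $g(x_0)=0$ for some $x_0\in\elr$. Additivity of $A$ forces $A_0=0$, so under $P^{x_0}$ the process $Y:=g(X)e^{-A}$ starts at $Y_0=g(x_0)=0$; being nonnegative it is a supermartingale by Fatou, whence $0\le E^{x_0}[Y_t]\le Y_0=0$ and therefore $Y_t=0$ for every $t$, $P^{x_0}$-a.s. As $e^{-A}>0$ this forces $g(X_t)=0$ for all $t$, $P^{x_0}$-a.s. Finally I would invoke regularity of $X$: for any $y\in\elr$ one has $P^{x_0}(T_y<\zeta)>0$, and on that event $g(y)=g(X_{T_y})=0$, so $g$ vanishes identically, contradicting the hypothesis that $g$ is not identically $0$.

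The routine ingredients are the $s$-convexity and the integration by parts. The only point in the ``if'' direction needing care is verifying that $A=\int_0^{\cdot}dB_u/g(X_u)$ is a bona fide CAF up to the boundaries, and this is exactly where strict positivity of $g$ on the open interval is used, guaranteeing local boundedness of $1/g(X)$ on $[0,\zeta)$. The conceptual heart is the ``only if'' direction, but once one notices that a nonnegative local martingale started at $0$ is forced to vanish, regularity of $X$ closes the argument at once.
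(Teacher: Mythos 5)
Your proof is correct and takes essentially the same approach as the paper: the $s$-convexity computation and the construction of $A$ via $g(X_t)\,dA_t=dB_t$ (Markovian Doob--Meyer plus integration by parts) reproduce the discussion preceding the theorem, and your ``only if'' direction rests on the same two ingredients as the paper's proof, namely that a nonnegative local martingale is a supermartingale that stays at zero once it vanishes, combined with regularity of $X$. The only deviation is cosmetic: you start the process at a zero $x_0$ of $g$ and derive $g\equiv 0$, contradicting the standing hypothesis, whereas the paper starts at an arbitrary point, hits the zero set $Z$ at time $T_Z$, and then contradicts regularity via the strong Markov property.
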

\begin{proof}
	What remains to be proven is the implication that the existence of an $A$ with above properties implies strict positivity of $g$. To this end suppose the closed set $Z:=\{x\in (\ell,r):g(x)=0\}$ is not empty. By the regularity of $X$ $P^x(T_Z<\zeta)>0$ for any $x \in (\ell,r)$. Since $g(X)\exp(-A)$ is a supermartingale being a non-negative local martingale, it will remain zero on $[T_Z, \zeta)$, which in turn implies $X$ does not leave the set $Z$ on $[T_Z, \zeta)$ since $\exp(-A_t)> 0$ on $[t<\zeta]$. One then deduces via the strong Markov property of $X$ that  $P^z(T_y <\zeta)=0$ for any $z \in Z$ and $y \in Z^c$. However, this contradicts the regularity of $X$.
\end{proof}

Although the strict positivity is essential for the above argument, one does not lose any generality by considering only functions in $\cS^+$ since $1+g \in \cS^+$ for any $g \in \cS$.

Also note that one can turn the above arguments backwards and show the existence of an $g \in \cS^+$ such that $g(X)\exp(-A)$ is a local martingale for a given $A$. Consequently, the local submartingale $g(X)$ has a multiplicative decomposition as a product of a local martingale and an increasing process. Such multiplicative decompositions in the context of Markov process goes back to the work of \Ito { and}  Watanabe \cite{IWmult} who studied  multiplicative decompositions of supermartingales and their use in the study of subprocesses. This historical note motivates the following definition. 
\begin{definition}
	$(g,A)$ is called an {\em \Ito-Watanabe pair} if $A$ is a CAF, $g\in \cS^+$,and $g(X)\exp(-A)$ is a $P^x$-local martingale for all $x \in \elr$.
\end{definition}

 The main purpose of this section is the construction of non-negative subharmonic functions appearing in \Ito-Watanabe pairs given a CAF  of $X$. Note that in general there is no uniqueness for such $g$. For instance, if $A_t=t$, then the increasing and decreasing solutions of $Ag =g$ belong to $\cS^+$ and $g(X)\exp(-A)$ are local martingales.   

\begin{definition}
A function $g$ is said to be {\em uniformly integrable near $\ell$ (resp. $r$)} if the family $\{g(X^{T_b}_{\tau}):\tau \mbox{ is a stopping time}\}$ is $P^x$-uniformly integrable for any $x <b$ (resp. $x>b$).  $g$ is said to be {\em semi-uniformly integrable} if it is uniformly integrable near $\ell$ or $r$. 
\end{definition}
\begin{proposition} \label{p:semiui}
	Consider an \Ito-Watanabe pair $(g,A)$, where $g$ is semi-uniformly integrable, and let $\mu_A$ be the Revuz measure associated to $A$. Then,  the following hold:
	\begin{enumerate}
		\item If $g$ is uniformly integrable near $\ell$, for any $b \in \elr$ and $x <b$ 
		\be \label{e:huil}
		E^x\int_0^{T_{b}}g(X_t)dA_t=\int_{\ell}^b\lim_{a \rar \ell}\frac{\left(s(x\wedge y)-s(a)\right)\left(s(b)-s(x\vee y\right)}{s(b)-s(a)}g(y)\mu_A(dy)<\infty. 
		\ee
		Moreover, $g(\ell)<\infty$ if $s(\ell)>-\infty$ and 
		\be \label{e:huill}
		\lim_{x \rar \ell} \frac{g(x)}{s(x)}=0 \mbox{ if } s(\ell)=-\infty.
		\ee
		Furthermore, given $s(\ell)>-\infty$ and $\int_{\ell}^c g(y)\mu_A(dy)<\infty$ for some $c$,
		\be \label{g:uilder}
		\frac{d^+g(\ell)}{ds}=\frac{g(b)-g(\ell)}{s(b)-s(\ell)}-\int_{\ell}^b\frac{s(b)-s(z)}{s(b)-s(\ell)}g(z)\mu_A(dz),
		\ee
		for any $b \in \elr$.
			\item  If $g$ is uniformly integrable near $r$, for any $b \in \elr$ and $x >b$
		 \be \label{e:huir}
		E^x\int_0^{T_{b}}g(X_t)dA_t=\int_{b}^r\lim_{c \rar r}\frac{\left(s(x\wedge y)-s(b)\right)\left(s(c)-s(x\vee y\right)}{s(c)-s(b)}g(y)\mu_A(dy)<\infty. 
		\ee
			Moreover,  $g(r)<\infty$ if $s(r)<\infty$ and 
		\be \label{e:huirl}
		\lim_{x \rar r} \frac{g(x)}{s(x)}=0 \mbox{ if } s(r)=\infty.
		\ee
		Furthermore, given $s(r)<\infty$ and $\int_c^r g(y)\mu_A(dy)<\infty$ for some $c$,
		\be \label{g:uirder}
		\frac{d^-g(r)}{ds}=\frac{g(r)-g(b)}{s(r)-s(b)}+\int_{b}^r\frac{s(z)-s(b)}{s(r)-s(b)}g(z)\mu_A(dz),
		\ee
		for any $b \in \elr$.
	\end{enumerate}
\end{proposition}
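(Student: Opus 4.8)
The plan is to reduce everything to a single optional-sampling identity on the intervals $(a,b)$ with $\ell<a<x<b<r$, and then let $a\downarrow\ell$. Since $g\in\cS^+$ is subharmonic, $g(X^{T_{ab}})$ is a uniformly integrable $P^x$-submartingale whose Doob--Meyer compensator is the CAF $B$ with $dB_t=g(X_t)\,dA_t$ (the $B$ appearing in the discussion preceding Theorem \ref{t:submvanish}). Optional sampling at $T_{ab}$ then gives
\[
E^x\!\int_0^{T_{ab}}\! g(X_s)\,dA_s=E^x[g(X_{T_{ab}})]-g(x)=g(a)\frac{s(b)-s(x)}{s(b)-s(a)}+g(b)\frac{s(x)-s(a)}{s(b)-s(a)}-g(x),
\]
using the scale-function exit probabilities. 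Applying the killed-process analogue of the potential formula (\ref{e:potentialA}) to $X$ stopped on leaving $(a,b)$, whose potential density is $u_{ab}(x,y)=\frac{(s(x\wedge y)-s(a))(s(b)-s(x\vee y))}{s(b)-s(a)}$, rewrites the left-hand side as $\int_a^b u_{ab}(x,y)g(y)\mu_A(dy)$.

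Next I would let $a\downarrow\ell$. Since $T_{ab}\uparrow T_b$ (with the empty-set convention $T_b=\zeta$) and, by a direct computation, $u_{ab}(x,\cdot)$ is decreasing in $s(a)$ and hence increases monotonically to $u_\ell(x,\cdot):=\lim_{a\to\ell}u_{ab}(x,\cdot)$, monotone convergence on both sides yields the equality in (\ref{e:huil}). Finiteness---the nontrivial half---is exactly where semi-uniform integrability enters: because $\{g(X^{T_b}_\tau):\tau\text{ a stopping time}\}$ is $P^x$-uniformly integrable, the quantities $E^x[g(X_{T_{ab}})]=g(a)P^x(T_a<T_b)+g(b)P^x(T_b<T_a)$ are bounded uniformly in $a$, so the identity forces $E^x\int_0^{T_b}g(X_s)\,dA_s<\infty$.

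The boundary statements then follow by analysing the term $g(a)P^x(T_a<T_b)=E^x[g(X_{T_{ab}})\mathbf 1_{\{T_a<T_b\}}]$ as $a\downarrow\ell$. When $s(\ell)>-\infty$ the factor $P^x(T_a<T_b)\to\frac{s(b)-s(x)}{s(b)-s(\ell)}>0$, so the uniform $L^1$-bound on $g(X_{T_{ab}})$ forces $g(a)$ to stay bounded, and $s$-convexity gives $g(\ell)=\lim_{a\downarrow\ell}g(a)<\infty$. When $s(\ell)=-\infty$ one has $P^x(T_a<T_b)\to0$, so $g(X_{T_{ab}})\mathbf 1_{\{T_a<T_b\}}\to0$ in probability; uniform integrability upgrades this to $g(a)P^x(T_a<T_b)\to0$, and after rewriting $g(a)P^x(T_a<T_b)=-(s(b)-s(x))\frac{g(a)}{s(a)}(1+o(1))$ this is precisely (\ref{e:huill}). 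Finally, for (\ref{g:uilder}) I would take $s(\ell)>-\infty$, solve the limiting identity for $g(x)$, and compute $\lim_{x\downarrow\ell}\frac{g(x)-g(\ell)}{s(x)-s(\ell)}$: splitting $\int_\ell^b u_\ell(x,y)g(y)\mu_A(dy)$ at $y=x$, the lower piece is $o(s(x)-s(\ell))$ under the hypothesis $\int_\ell^c g\,\mu_A<\infty$, while the upper piece converges to $\int_\ell^b(s(b)-s(z))g(z)\mu_A(dz)$, producing the stated formula. Part (2) is the mirror image under the symmetry $\ell\leftrightarrow r$, $s\leftrightarrow -s$.

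The main obstacle I anticipate is the careful justification of the two interchanges of limits as $a\downarrow\ell$, together with the uniform-integrability passage: one must confirm that $T_{ab}\uparrow T_b$ under the empty-set convention, that $X_{T_{ab}}\to X_{T_b}$ $P^x$-a.s. so that the UI family converges, and---most delicately---that in the case $s(\ell)=-\infty$ the boundary mass genuinely vanishes rather than merely staying bounded. This last point is what distinguishes the sharp conclusion (\ref{e:huill}) from the weaker bound that $s$-convexity alone would supply.
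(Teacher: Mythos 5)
Your proposal is correct and follows essentially the same route as the paper's proof: the identity $E^x[g(X_{T_{ab}})]=g(x)+E^x\int_0^{T_{ab}}g(X_t)\,dA_t$ obtained from the compensator $dB_t=g(X_t)\,dA_t$, the killed potential kernel $u_{ab}$, monotone convergence as $a\downarrow\ell$ with semi-uniform integrability controlling the left-hand side, and the same splitting of $\int_\ell^b u(b;x,y)g(y)\mu_A(dy)$ at $y=x$ for the derivative formula. The only cosmetic difference is that for (\ref{e:huill}) you argue via convergence in probability plus uniform integrability of the boundary mass $g(X_{T_{ab}})\chf_{[T_a<T_b]}$, whereas the paper equivalently notes $E^xg(X_{T_b})=g(b)$ when $s(\ell)=-\infty$ and reads off that $g(a)\frac{s(b)-s(x)}{s(b)-s(a)}\rar 0$; these are the same argument.
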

\begin{proof}
	A straightforward integration by parts yields $N:=g(X)-\int_0^{\cdot}g(X_s)dA_s$ is a $P^x$-local martingale for any $x\in \elr$.  First, pick $a,x$ and $b$ so that $\ell<a<x <b$ and observe that $N$ is $P^x$-uniformly integrable when stopped at $T_{ab}$ since $g$ is bounded on $(a,b)$ and $E^x[A_{T_{ab}}]<\infty$ in view of (\ref{e:potentialA}) and that $\mu_A$ is a Radon measure. Thus,
	\be \label{e:Hab}
	g(a)\frac{s(b)-s(x)}{s(b)-s(a)}+g(b)\frac{s(x)-s(a)}{s(b)-s(a)}=E^x g(X_{T_{ab}})=g(x)+E^x\int_0^{T_{ab}}g(X_t)dA_t.
	\ee
	Next suppose that $g$ is uniformly integrable near $\ell$. Then, by letting $a \rar \ell$, the right hand side of the above increases to
	\[
	g(x)+E^x\int_0^{T_{b}}g(X_t)dA_t
	\]
	while the left hand side converges to $	E^x g(X_{T_{b}})<\infty$ by the assumed form of semi-uniform integrability. This establishes (\ref{e:huil}) in view of (\ref{e:potentialA}) since the potential kernel $u(b;\cdot,\cdot)$ of $X$ killed when exiting $(l,b)$ is given by
	\be \label{e:ulb}
	u(b;x,y)=\lim_{a \rar \ell}\frac{\left(s(x\wedge y)-s(a)\right)\left(s(b)-s(x\vee y\right)}{s(b)-s(a)}.
	\ee
	Finiteness of $E^x g(X_{T_{b}})$ implies that of 
	\[
	\limsup_{a \rar  \ell} g(a)\frac{s(b)-s(x)}{s(b)-s(a)},
	\]
	which in turn yields the finiteness of $g(\ell)$ when $s(\ell)>-\infty$.
	
	Moreover, since $g(X_{T_b})=g(b)$ when $s(\ell)=-\infty$, one obtains
	\[
	0=\lim_{a \rar \ell}g(a)\frac{s(b)-s(x)}{s(b)-s(a)},
	\]
	which proves (\ref{e:huill}).
	
	To find the right derivative of $g$ at $\ell$, first observe that (\ref{e:huil}) yields for any $x<b$
	\[
	\begin{split}
		E^x[g(X_{T_b})]=&g(\ell)\frac{s(b)-s(x)}{s(b)-s(\ell)}+ g(b)\frac{s(x)-s(\ell)}{s(b)-s(\ell)}\\
		=& g(x) +\int_{\ell}^b  \frac{(s(x\wedge z)-s(\ell))(s(b)-s(x \vee z)}{s(b)-s(\ell)})g(z)\mu_A(dz).
	\end{split}
	\]
	Consequently,
	\[
	\frac{g(x)-g(\ell)}{s(x)-s(\ell)}=\frac{g(b)-g(\ell)}{s(b)-s(\ell)} -\int_{\ell}^x\frac{(s(z)-s(\ell))(s(b)-s(x))}{(s(x)-s(\ell))(s(b)-s(\ell))}g(z)\mu_A(dz)-\int_{x+}^b \frac{s(b)-s(z)}{s(b)-s(\ell)}g(z)\mu_A(dz).
	\]
	Taking limits as $x \rar \ell$ and utilising $\int_{\ell}^c g(y)\mu_A(dy)<\infty$ for some $c$ yield the desired result. 
	
	Similar arguments apply when $g$ is uniformly integrable near $r$.
\end{proof}
\begin{theorem}\label{t:main1}
	Let $g$ be a  Borel measurable function on $\bfE$ and $A$ a  CAF with Revuz measure $\mu_A$. Then the following are equivalent:
	\begin{enumerate}[label=(\arabic*),ref=(\arabic*)]
		\item \label{t:eq1} $(g,A)$ is an \Ito-Watanabe pair, $g$ is uniformly integrable near $\ell$ (resp. near $r$) and $s(\ell)=-\infty$ (resp. $s(r)=\infty$).
		\item \label{t:eq2} $s(\ell)=-\infty$ (resp. $s(r)=\infty$), $g$ solves the integral equation\footnote{Any solution is implicitly assumed to be integrable in the sense that $\int_{\ell}^r |v_c(x,y)g(y)|\mu_A(dy)<\infty$ for all $x \in \elr$.}
		\be \label{e:IEh}
		g(x)=g(c)+ \int_{\ell}^r v_c(x,y)g(y)\mu_A(dy),
		\ee
		where 
		\[
		v_c(x,y)= s(x\vee y)-s(c\vee y)\; \left(\mbox{resp. }s(c\wedge y)-s(x\wedge y)\right),
		\]
		and $g(x)>0$ for some $x \in \elr$.
	\end{enumerate}
\end{theorem}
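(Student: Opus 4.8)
The plan is to treat the case where $g$ is uniformly integrable near $\ell$ and $s(\ell)=-\infty$; the statement near $r$ follows by reflecting the state space (replacing $x$ by $-x$), which interchanges the roles of $\ell$ and $r$. The device underlying both implications is that when $s(\ell)=-\infty$ the potential kernel (\ref{e:ulb}) degenerates to $u(b;x,y)=s(b)-s(x\vee y)$, so that (\ref{e:huil}) reads $E^x\int_0^{T_b}g(X_t)\,dA_t=\int_{\ell}^b\big(s(b)-s(x\vee y)\big)g(y)\mu_A(dy)$. On the analytic side, differentiating the right-hand side of (\ref{e:IEh}) in the scale variable shows that (\ref{e:IEh}) is equivalent to the two conditions $\frac{d^+g}{ds}(x)=\int_{(\ell,x]}g(y)\mu_A(dy)$ and $\lim_{x\rar\ell}\frac{d^+g}{ds}(x)=0$; that is, the increments of the $s$-derivative of $g$ are $g\,\mu_A$, with the derivative vanishing at $\ell$. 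I would record this reformulation at the outset, as both directions reduce to comparing it with the degenerate potential formula.

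For \Implies{t:eq1}{t:eq2}, note that $s(\ell)=-\infty$ makes $\ell$ inaccessible: the scale ratio $P^x(T_b<T_a)\to 1$ as $a\rar\ell$, so from any $x<b$ the process reaches $b$ before absorption and $E^x[g(X_{T_b})]=g(b)$. Combining this with (\ref{e:huil}) and the local-martingale identity $E^x[g(X_{T_b})]=g(x)+E^x\int_0^{T_b}g(X_t)\,dA_t$ established in the proof of Proposition \ref{p:semiui} gives $g(b)=g(x)+\int_{\ell}^b\big(s(b)-s(x\vee y)\big)g(y)\mu_A(dy)$ for every $x<b$. Writing this for two base points $x$ and $c$ and subtracting eliminates both $g(b)$ and $s(b)$, leaving $g(x)-g(c)=\int_{\ell}^b\big(s(x\vee y)-s(c\vee y)\big)g(y)\mu_A(dy)$; since the integrand vanishes for $y>x\vee c$ the upper limit may be raised to $r$, which is exactly (\ref{e:IEh}). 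Finiteness of the integral is the finiteness already asserted in (\ref{e:huil}), and $g>0$ at some point is immediate from $g\in\cS^+$.

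For \Implies{t:eq2}{t:eq1} I would argue in three steps, of which \emph{positivity} is the main obstacle. By the reformulation, on any open interval where $g>0$ the measure $d(d^+g/ds)=g\,\mu_A$ is non-negative so $g$ is $s$-convex there, whereas on any interval where $g<0$ it is $s$-concave. A bounded component of $\{g<0\}$ would then be an $s$-concave function with zero boundary values, hence non-negative, a contradiction; the component meeting $\ell$ is excluded because $g(\ell+)$ exists, is finite (dominated convergence in (\ref{e:IEh})) and is non-negative, and the component meeting $r$ is excluded because the adjacent positive region, being $s$-convex between two zeros, must vanish, whereupon forward uniqueness of the Volterra equation from a point of zero Cauchy data forces $g\equiv 0$ and contradicts $g(x_0)>0$. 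Hence $g\ge 0$; the same zero-Cauchy-data argument removes interior zeros, so $g>0$ on $\elr$, and then $\frac{d^+g}{ds}=\int_{(\ell,\cdot]}g\,d\mu_A\ge0$ is non-decreasing, giving $g\in\cS^+$. I expect this step to be genuinely delicate because the kernel $v_c$ is not sign-definite and the natural boundary $\ell$ makes the naive Gronwall weight $s(x)-s(\ell)$ blow up; the convex/concave dichotomy on the nodal sets of $g$ is the device that circumvents this.

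With $g\in\cS^+$, $g(X)$ is a local submartingale carrying a unique Doob--Meyer CAF $B$. Setting $\tilde B_t=\int_0^t g(X_s)\,dA_s$, I would identify $B=\tilde B$ through their Revuz measures: $\mu_{\tilde B}=g\,\mu_A$ directly from (\ref{d:Revmes}), while comparing $E^x[B_{T_{ab}}]=\int_a^b u(a,b;x,y)\mu_B(dy)$ with the deterministic identity that expresses the harmonic interpolation of $g$ on $(a,b)$ minus $g$ as the Dirichlet Green potential of $d(d^+g/ds)$ yields $\mu_B=d(d^+g/ds)=g\,\mu_A$. The bijection between CAFs and their Revuz measures then gives $B=\tilde B$, and an integration by parts converts ``$g(X)-\int_0^\cdot g(X_s)\,dA_s$ is a local martingale'' into ``$g(X)\exp(-A)$ is a local martingale'', so $(g,A)$ is an \Ito-Watanabe pair. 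Uniform integrability near $\ell$ is now free: $g$ is non-decreasing and the stopped process $X^{T_b}$ takes values in $(\ell,b]$, whence $g(X^{T_b}_\tau)\le g(b)<\infty$ for every stopping time $\tau$. The one secondary point requiring care here is the clean identification $\mu_B=d(d^+g/ds)$, which rests on the fact that the Dirichlet Green functions $u(a,b;\cdot,\cdot)$ separate Radon measures on $\elr$.
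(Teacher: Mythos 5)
Your direction \Implies{t:eq1}{t:eq2} is exactly the paper's argument (use $s(\ell)=-\infty$ and uniform integrability to let $a\rar\ell$ in the exit-time identity, then eliminate $g(b)$ and $s(b)$ by comparing base points $x$ and $c$), and your concluding step for \Implies{t:eq2}{t:eq1} --- identifying the Doob--Meyer CAF $B$ of $g(X)$ with $\int_0^{\cdot}g(X_s)\,dA_s$ by matching Green potentials/Revuz measures and then integrating by parts --- is also the paper's (the paper matches potentials and invokes Theorem IV.2.13 of \cite{BG}, you match Revuz measures; both work). The genuine gap is in the positivity step, precisely where you flag the difficulty. (a) To exclude a component $(\ell,z)$ of $\{g<0\}$ you claim $g(\ell+)$ exists, is finite ``by dominated convergence in (\ref{e:IEh})'', and is non-negative. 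Dominated convergence gives none of this: writing (\ref{e:IEh}) for $x<c$ as $g(x)=g(c)+(s(x)-s(c))\int_{(\ell,x]}g\,d\mu_A+\int_{(x,c]}(s(y)-s(c))g\,d\mu_A$, the first term is an $\infty\cdot 0$ form as $x\rar\ell$ (the integrability footnote only gives finiteness for each \emph{fixed} $x$), and the second term has no integrable dominant either, since $\int_{(\ell,c]}|s(y)-s(c)|\,|g(y)|\mu_A(dy)<\infty$ is not part of the hypothesis. Worse, the non-negativity of $g(\ell+)$ is asserted without any argument, and it is exactly the point at issue: on such a component $g$ is $s$-concave, negative, and vanishes at $z$, which forces $g$ to be non-decreasing there and hence $g(\ell+)\in[-\infty,0)$, so the property you invoke is precisely what must be disproved. (b) For the component of $\{g<0\}$ meeting $r$, the ``forward uniqueness from zero Cauchy data'' is not available at the right endpoint $z$ of a positive component touching $\ell$: there $g(z)=0$ but $\frac{d^+g}{ds}(z)=\int_{(\ell,z]}g\,d\mu_A$ need not vanish, and since the equation is non-local (the derivative at $x$ involves all of $g$ on $(\ell,x]$), Cauchy data at the single point $z$ does not launch a forward uniqueness argument.

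Both gaps are closed by a one-line consequence of the reformulation you yourself set up, and this is how the paper argues in Lemma \ref{l:mainR}(4): on a component of $\{g<0\}$ (resp.\ $\{g>0\}$) whose left edge is $\ell$, the identity $\frac{d^+g}{ds}(x)=\int_{(\ell,x]}g\,d\mu_A$ makes $g$ non-increasing (resp.\ non-decreasing) on that component, while continuity forces $g=0$ at its right endpoint; hence $g\geq 0$ (resp.\ $g\leq 0$) on the component, a contradiction. In other words, monotonicity on one-sided components, not boundary limits or Cauchy data, is the correct device. Once $g\geq 0$ is known this way, your Gronwall/Volterra argument does become valid for upgrading to strict positivity: an interior zero at $z$ forces $g\equiv 0$ on $(\ell,z]$ by the same monotonicity, so the memory term $\int_{(\ell,z]}g\,d\mu_A$ genuinely vanishes and forward uniqueness gives $g\equiv 0$, contradicting $g(x_0)>0$. (The paper instead gets strict positivity for free from Theorem \ref{t:submvanish} after the local-martingale property is established, so your analytic route here is a legitimate, slightly different, alternative once repaired.)
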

The following key lemma, whose proof is delegated to the Appendix, will be useful in proving the above theorem and some subsequent results.
\begin{lemma}\label{l:mainR}
	Let $g$ be a solution of 
	\be \label{e:genelIE}
	g(x)=g(c)+ \kappa (s(x)-s(c))+ \int_{\ell}^r v_c(x,y)g(y)\mu_A(dy),
	\ee
	 and define $O^+:=\{x\in \elr:g(x)>0\}$  and $O^-:=\{x\in \elr:g(x)<0\}$. Then, following statements are valid:
	 \begin{enumerate}
	 	\item For any $\ell <a<x<b<r$
	 	\be 
	 	E^x[g(X_{T_{ab}})]=g(x)+ \int_a^b \frac{(s(x\wedge y)-s(a))(s(b)-s(x\vee y))}{s(b)-s(a)}g(y)\mu_A(dy)\label{e:EhabR}
	 	\ee
	 	\item $g$ is $s$-convex on $O^+$ and $s$-concave on $O^-$.
	 	\item If  $v_c(x,y)= s(x\vee y)-s(c\vee y)\, \left(\mbox{resp. }s(c\wedge y)-s(x\wedge y)\right)$ and  $s(\ell)=-\infty$ (resp. $s(r)=\infty$), then $\kappa=-\lim_{x\rar \ell}\frac{g(x)}{s(b)-s(x)}$ for any $b \in \elr$. In particular, $\kappa=0$ if $g$ is uniformly integrable near $\ell$ (resp. $r$).
	 	\item  If $\kappa= 0$, $g$ does not change sign in $\elr$.
	 	\item If $	v_c(x,y)= s(x\vee y)-s(c\vee y)\; \left(\mbox{resp. }s(c\wedge y)-s(x\wedge y)\right)$ and $g\geq 0$, $g-\kappa s$ is increasing (resp. decreasing).
	 	\item $g$ is differentiable with respect to $s$ from  left and  right with following derivatives:
	 	\be \label{e:gsder}
	 	\begin{split}
	 		\frac{d^+g(x)}{ds}&=\left\{\ba{ll}\kappa +\int_{\ell}^x g(y)\mu_A(dy), & \mbox{if }v_c(x,y)= s(x\vee y)-s(c\vee y); \\
	 		\kappa - \int_{x+}^r g(y)\mu_A(dy), & \mbox{if }v_c(x,y)= s(c\wedge y)-s(x\wedge y).
	 		\ea\right.
	 		\\
	 		\frac{d^-g(x)}{ds}&=\left\{\ba{ll}\kappa +\int_{\ell}^{x-} g(y)\mu_A(dy), & \mbox{if }v_c(x,y)= s(x\vee y)-s(c\vee y); \\
	 		\kappa - \int_{x}^r g(y)\mu_A(dy), & \mbox{if }v_c(x,y)= s(c\wedge y)-s(x\wedge y).
	 		\ea  \right.
	 		\end{split}
 		\ee
 		Consequently, $g$ is differentiable with respect to $s$ at $x$ if $\mu_A(\{x\})=0$ or $g(x)=0$. Moreover, $\frac{d^+g(\ell)}{ds}$ (resp. $\frac{d^-g(r)}{ds}$) exists and satisfies the above formula whenever $g(\ell)<\infty$ (resp. $g(r)<\infty$).
	 \end{enumerate} 
\end{lemma}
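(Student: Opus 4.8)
The plan is to make the derivative identity (\ref{e:gsder}) and the exit identity (\ref{e:EhabR}) the computational backbone, and to read off the qualitative statements (2)--(5) from them. Throughout I would work with the increasing kernel $v_c(x,y)=s(x\vee y)-s(c\vee y)$, the decreasing case being the mirror image (swap $\ell\leftrightarrow r$ and reflect $s$). For (\ref{e:gsder}) I would fix $x$ and form the $s$-difference quotient of (\ref{e:genelIE}). Since $v_c(x+h,y)-v_c(x,y)=s((x+h)\vee y)-s(x\vee y)$ equals $s(x+h)-s(x)$ for $y\le x$, vanishes for $y\ge x+h$, and equals $s(x+h)-s(y)$ for $x<y<x+h$, dividing by $s(x+h)-s(x)$ gives
\[
\frac{g(x+h)-g(x)}{s(x+h)-s(x)}=\kappa+\int_{\ell}^{x}g\,d\mu_A+\frac{1}{s(x+h)-s(x)}\int_{(x,x+h)}\big(s(x+h)-s(y)\big)g(y)\mu_A(dy),
\]
and the last term is dominated by $\int_{(x,x+h)}|g|\,d\mu_A\to0$ because $\mu_A$ is Radon and $(x,x+h)\downarrow\emptyset$. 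This yields the right derivative; the left derivative is the same computation with $(x-h,x)$, and the jump $\frac{d^+g}{ds}-\frac{d^-g}{ds}=g(x)\mu_A(\{x\})$ gives the differentiability criterion. The boundary statement follows by letting $x\to\ell$ once $g(\ell)<\infty$.

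For (\ref{e:EhabR}) I would substitute (\ref{e:genelIE}) into the elementary exit identity $E^x[g(X_{T_{ab}})]=g(a)\frac{s(b)-s(x)}{s(b)-s(a)}+g(b)\frac{s(x)-s(a)}{s(b)-s(a)}$ recorded after the definition of $\cS$. The constant $g(c)$ and the $s$-affine term $\kappa(s(\cdot)-s(c))$ are reproduced exactly by the harmonic weights (the weighted combination of $s(a),s(b)$ returns $s(x)$), so they cancel against the corresponding parts of $g(x)$, leaving $\int_{\ell}^{r}J(y)g(y)\mu_A(dy)$ with
\[
J(y)=\frac{(s(b)-s(x))v_c(a,y)+(s(x)-s(a))v_c(b,y)}{s(b)-s(a)}-v_c(x,y).
\]
The terms $s(c\vee y)$ drop out (their weights sum to zero), and a four-case inspection ($y\le a$, $a<y\le x$, $x<y<b$, $y\ge b$) shows $J(y)=u_{(a,b)}(x,y)\mathbf 1_{(a,b)}(y)$, where $u_{(a,b)}(x,y):=\frac{(s(x\wedge y)-s(a))(s(b)-s(x\vee y))}{s(b)-s(a)}$; this is exactly (\ref{e:EhabR}). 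All interchanges are legitimate because each of $v_c(a,\cdot),v_c(b,\cdot),v_c(x,\cdot)$ is $g\mu_A$-integrable by the standing assumption on solutions.

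Statements (2) and (5) then drop out of (\ref{e:gsder}): in both kernels one has $\frac{d^+g}{ds}(x_2)-\frac{d^+g}{ds}(x_1)=\int_{(x_1,x_2]}g\,d\mu_A$, so the $s$-derivative is nondecreasing on every component of $O^+$ and nonincreasing on every component of $O^-$ (giving (2)), while for $g\ge0$ the $s$-derivative of $g-\kappa s$ is $\int_{(\ell,x]}g\,d\mu_A\ge0$ (giving (5)). For the identification of $\kappa$ in (3) the key observation is that the standing assumption already forces $\int_{(\ell,x]}|g|\,d\mu_A<\infty$ near $\ell$ (for $x<c$ divide $\int|v_c(x,\cdot)g|\,d\mu_A$ by the constant $s(c)-s(x)$), whence $\int_{(\ell,x]}g\,d\mu_A\to0$ and $\frac{d^+g}{ds}(\ell+)=\kappa$. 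Since $g$ is the $s$-integral of its $s$-derivative and $s(b)-s(x)\to+\infty$ as $x\to\ell$, a Stolz/L'H\^opital argument gives $\lim_{x\to\ell}\frac{g(x)}{s(b)-s(x)}=-\frac{d^+g}{ds}(\ell+)=-\kappa$. The ``in particular'' is then immediate: uniform integrability near $\ell$ together with $s(\ell)=-\infty$ forces $X_{T_b}=b$ a.s. and $E^x[g(X_{T_{ab}})]\to g(b)$ in the exit identity, so $g(a)\frac{s(b)-s(x)}{s(b)-s(a)}\to0$ and the displayed limit vanishes.

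The one genuinely non-computational point is (4), and this is where I expect the work to concentrate. Assuming $\kappa=0$, the previous paragraph gives $\frac{d^+g}{ds}(\ell+)=0$. I would then argue by propagation: $g$ is continuous, and on the connected component of $O^+$ (resp.\ $O^-$) that abuts $\ell$ one has $\frac{d^+g}{ds}(x)=\int_{(\ell,x]}g\,d\mu_A\ge0$ (resp.\ $\le0$) with value $0$ at $\ell+$ and monotone by (\ref{e:gsder}); hence $g$ is $s$-monotone there and cannot return to $0$, so by continuity the component must extend all the way to $r$, which precludes a sign change. A possible flat stretch $g\equiv0$ near $\ell$ is harmless, since $g$ retains one sign once it leaves $0$. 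The decreasing case is the reflection, run from $r$ using $\frac{d^-g}{ds}(r-)=\kappa=0$. I would flag this monotone-propagation step as the crux; everything else reduces to the two explicit formulas (\ref{e:EhabR}) and (\ref{e:gsder}) together with routine dominated-convergence bookkeeping.
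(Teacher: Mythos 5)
Your treatment of parts (1), (3) and (6) is correct and essentially reproduces the paper's own proof: the paper also obtains (\ref{e:EhabR}) by substituting (\ref{e:genelIE}) into the two-point exit formula and regrouping the kernel, obtains (\ref{e:gsder}) by the same difference-quotient computation, and proves (3) by first identifying $\kappa=-\lim_{x\to\ell}g(x)/(s(b)-s(x))$ and then using uniform integrability to make that limit vanish (the paper does the identification via dominated convergence in (\ref{e:EhabR}) rather than your Stolz argument built on $\frac{d^+g}{ds}(\ell+)=\kappa$; both are sound, and your preliminary observation that the standing integrability assumption forces $\int_{(\ell,x]}|g|\,\mu_A(dy)<\infty$ is exactly the paper's first step). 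For (2) and (5) you genuinely deviate: the paper proves (2) probabilistically from (1) (on a component of $O^+$ the integral term in (\ref{e:EhabR}) is nonnegative, so $E^x[g(X_{T_{ab}})]\geq g(x)$, which is $s$-convexity) and (5) directly from the equation, whereas you read both off the monotonicity of $\frac{d^+g}{ds}$ given by (\ref{e:gsder}). Your derivative-first route is equally valid and arguably cleaner, since it makes (2), (5) and the boundary behaviour in (3) all corollaries of a single formula.

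The genuine gap is in (4), which you yourself flag as the crux. Your propagation argument presupposes that $O^+$ or $O^-$ has a connected component abutting $\ell$, i.e.\ that $g$ has a definite sign on some interval $(\ell,\beta)$, or, after a flat stretch $g\equiv 0$ on $(\ell,u]$, on some $(u,\beta)$. This is not automatic: a priori the zeros of $g$, with sign changes between them, could accumulate at $\ell$ (or at $u$), in which case there is no component on which to run the propagation, and on a component $(\alpha',\beta')$ with $\alpha'>u$ the quantity $\frac{d^+g}{ds}(x)=\int_{(\ell,x]}g\,\mu_A(dy)$ has no definite sign because the integral picks up contributions of both signs from $(u,\alpha')$. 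Your sentence that $g$ ``retains one sign once it leaves $0$'' is precisely the assertion that needs proof. The fix is short and uses (2), which you have already established: a component $(\alpha,\beta)$ of $O^+$ with $\ell<\alpha<\beta<r$ would have $g(\alpha)=g(\beta)=0$ and $g>0$ inside, contradicting $s$-convexity on that component (and symmetrically for $O^-$); hence every component of $O^+\cup O^-$ abuts $\ell$ or $r$, oscillation is impossible, at most one of $O^{\pm}$ can abut $\ell$, and your monotone propagation then goes through (in the flat-stretch case it even shows there is at most one nonzero component, abutting $r$). To be fair, the paper's proof is also terse here: it asserts without argument that a sign change forces $g$ to be, say, nonnegative on $(\ell,c^*)$ and nonpositive beyond, which is exactly the consequence of the same no-interior-bump observation, before deriving its contradiction from the integral equation. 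So your route to the contradiction differs only cosmetically from the paper's; what is genuinely missing from your write-up is the exclusion, via (2), of sign-change components interior to $\elr$.
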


\begin{remark} \label{r:gsign}
	If $\kappa\neq 0$, $g$ can change sign. Indeed, suppose $\elr=(-1,1), \, \mu_A(dy)=dy$, and $s(x)=x$. Then, $g(x)=\sinh(x)$ solves
	\[
	g(x)=\cosh(-1) \sinh(x) +\int_{\ell}^x(x-y^+)g(y)\mu_A(dy).
	\]
	Clearly, this is linked to a Brownian motion on $(-1,1)$. $\sinh(B_t)\exp(-\frac{t}{2})$ is a local martingale that hits $0$ infinitely many times. 
\end{remark}
\begin{proof}[Proof of Theorem \ref{t:main1}]
	
	\Implies{t:eq1}{t:eq2}: As in the proof of Proposition \ref{p:semiui} consider $\ell <a <x <b <r$ and assume further that $b>c$. Suppose that $g$ is uniformly integrable near $\ell$, which in particular implies (\ref{e:huill}). Then, (\ref{e:Hab}), (\ref{e:huil}) and (\ref{e:huill})   yield  
	\[
	g(x)=g(b)-\int_{\ell}^b u(b;x,y)g(y)\mu_A(dy),
	\]
where 
\[
u(b;x,y)=s(b)-s(x\vee y)
\]
in view of  (\ref{e:ulb}). 	That is,
\[
g(x)=g(c)+ \int_{\ell}^b\left(u(b;c,y)-u(b;x,y)\right)g(y)\mu_A(dy)=g(c)+ \int_{\ell}^b\left(s(x\vee y) -s(c\vee y)\right)g(y)\mu_A(dy).
\]
Since
\[
\int_{x \vee c}^b\left(s(x\vee y) -s(c\vee y)\right)g(y)\mu_A(dy)=0,
\]
the claim follows by the arbitrariness of $b$. The case of uniform integrability near $r$ is handled similarly. 

\Implies{t:eq2}{t:eq1} The proof will be given for $v_c(x,y)=s(x\vee y)-s(c\vee y)$. The other case can be done similarly. 

 It follows from Lemma \ref{l:mainR} that $g$ is non-negative and $s$-convex on $\elr$. Thus, $g$ is subharmonic and there exists a CAF $B$ by Theorem 51.7 in \cite{GTMP} that $g(X)-B$ is a $P^x$-local martingale for any $x \in \elr$. In particular, for any $\ell <a<x<b<r$,
\[
E^x[g(X_{T_{ab}})]=g(x)+ E^x[B_{T_{ab}}]=g(x)+ \int_a^b \frac{(s(x\wedge y)-s(a))(s(b)-s(x\vee y))}{s(b)-s(a)}\mu_B(dy)
\]
due to (\ref{e:potentialA}), where $\mu_B$ is the Revuz measure associated with $B$. On the other hand, (\ref{e:EhabR}) yields
\[
E^x[g(X_{T_{ab}})]=g(x)+  \int_a^b \frac{(s(x\wedge y)-s(a))(s(b)-s(x\vee y))}{s(b)-s(a)}g(y)\mu_A(dy).
\]
Since 
\[
\int_a^b \frac{(s(x\wedge y)-s(a))(s(b)-s(x\vee y))}{s(b)-s(a)}g(y)\mu_A(dy)=E^x\int_0^{T_{ab}} g(X_t)dA_t,
\]
one deduces easily that $E^x[B_{T_{ab}}]=E^x\int_0^{T_{ab}} g(X_t)dA_t$ for all $a<x<b$. That is, the potentials of $g \cdot A$ and $B$ coincide when $X$ is killed at $T_{ab}$, which in turn leads to the fact that $B$ and $g \cdot A$  are indistinguishable by Theorem IV.2.13 in \cite{BG} since $a$ and $b$ are arbitrary. Thus, $g(X)-\int_0^{\cdot}g(X_t)dA_t$ is a local martingale. A simple integration by parts and the fact that $g$ is bounded on the compact intervals of $\elr$ show that $g(X)\exp(-A)$ is a local martingale. Since $g$ is not identically $0$, $(g,A)$ is an \Ito-Watanabe pair in view of Theorem \ref{t:submvanish}.

Uniform integrability near $\ell$ is obvious since $g$ is bounded on $(\ell, b)$ for any $b<r$ in view of Lemma \ref{l:mainR} and the fact that $g\geq 0$. 
\end{proof}
Since $-s(\ell)=s(r)=\infty$, when $X$ is recurrent, the following corollary is immediate.
\begin{corollary}\label{c:mainR}
	Let $g$ be a  Borel measurable function on $\bfE$ and $A$ a  CAF with Revuz measure $\mu_A$. Suppose further that $X$ is recurrent. Then the following are equivalent:
	\begin{enumerate}
		\item $(g,A)$ is an \Ito-Watanabe pair, $g$ is uniformly integrable near $\ell$ (resp. near $r$).
		\item  $g$ solves the integral equation
		\be \label{e:IEhR}
		g(x)=g(c)+ \int_{\ell}^r v_c(x,y)g(y)\mu_A(dy),
		\ee
		where 
		\[
		v_c(x,y)= s(x\vee y)-s(c\vee y)\; \left(\mbox{resp. }s(c\wedge y)-s(x\wedge y)\right),
		\]
		and $g(x)>0$ for some $x \in \elr$.
	\end{enumerate}
\end{corollary}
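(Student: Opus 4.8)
The plan is to deduce the corollary directly from Theorem \ref{t:main1}, the point being that the recurrence hypothesis makes the scale-function conditions appearing there automatic rather than extra assumptions. So I would not prove anything new about Itô--Watanabe pairs or the integral equation; instead I would only argue that the two theorems have literally the same content once recurrence is imposed.

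First I would invoke the classical boundary classification for one-dimensional regular diffusions: $X$ is recurrent if and only if $s(\ell+)=-\infty$ and $s(r-)=+\infty$ (see, e.g., Chapter II of \cite{BorSal}). The mechanism is transparent from the exit probabilities already recorded in the text: for $\ell<a<x<b<r$ one has $P^x(T_b<T_a)=\frac{s(x)-s(a)}{s(b)-s(a)}$, and if, say, $s(\ell)>-\infty$, then letting $a\rar\ell$ keeps this quantity strictly less than $1$, so the process reaches a neighbourhood of $\ell$ before returning to $b$ with positive probability and recurrence fails; divergence of $s$ at both ends is exactly what forces both boundaries to be inaccessible and the process to keep returning to every point. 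Thus under the recurrence assumption the conditions $s(\ell)=-\infty$ and $s(r)=\infty$ hold \emph{simultaneously}.

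With both divergences in force, the clause ``$s(\ell)=-\infty$ (resp.\ $s(r)=\infty$)'' in statement \ref{t:eq1} of Theorem \ref{t:main1} is vacuously true, so \ref{t:eq1} collapses to exactly statement (1) of the corollary, namely that $(g,A)$ is an Itô--Watanabe pair with $g$ uniformly integrable near $\ell$ (resp.\ near $r$). The same clause drops out of statement \ref{t:eq2}, leaving precisely statement (2) of the corollary, with the two admissible kernels $v_c(x,y)=s(x\vee y)-s(c\vee y)$ and $v_c(x,y)=s(c\wedge y)-s(x\wedge y)$ corresponding respectively to uniform integrability near $\ell$ and near $r$. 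Applying Theorem \ref{t:main1} once for each of the two ``resp.'' versions then yields the asserted equivalence verbatim. The only step demanding any care is the boundary classification fact, but since that is entirely standard there is no genuine obstacle here; the corollary is immediate once the scale behaviour at recurrent boundaries is observed.
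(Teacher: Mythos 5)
Your proposal is correct and is exactly the paper's own argument: the paper dispatches this corollary with the single remark that recurrence forces $-s(\ell)=s(r)=\infty$, after which Theorem \ref{t:main1} applies verbatim with its scale-function hypotheses vacuously satisfied. Your additional justification of the boundary classification via the exit probabilities $P^x(T_b<T_a)=\frac{s(x)-s(a)}{s(b)-s(a)}$ is the standard (and correct) reason why recurrence implies the scale function diverges at both endpoints.
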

That $s(\ell)=-\infty$ is not a necessary condition for a subharmonic function that is uniformly integrable near $\ell$ to satisfy (\ref{e:IEh}). However, the situation is rather delicate since non-negative {\em harmonic} functions can appear in the decomposition of subharmonic functions. The following special case will  be instrumental for the integral equations satisfied by general semi-uniformly integrable $g \in \cS^+$. Note that if $g(\ell)=0$, $g$ is uniformly integrable near $\ell$. 
\begin{proposition} \label{p:hhp0}
		Let $g$ be a  Borel measurable function on $\bfE$ with $g(\ell)=0$ (resp. $g(r)=0)$ and $A$ a CAF with Revuz measure $\mu_A$.  Then the following are equivalent:
		\begin{enumerate}[label=(\arabic*),ref=(\arabic*)]
			\item \label{p:eq1} $(g,A)$ is an \Ito-Watanabe pair,  $s(\ell)>-\infty$ (resp. $s(r)<\infty$) and $\frac{d^+g(\ell+)}{ds}=0$ (resp. $\frac{d^-g(r-)}{ds}=0$).
			\item \label{p:eq2} $g$ solves the integral equation
			\be \label{e:IEg_tr_fin}
			g(x)=g(c)+ \int_{\ell}^r v_c(x,y)g(y)\mu_A(dy),
			\ee
			where 
			\[
			v_c(x,y)= s(x\vee y)-s(c\vee y)\; \left(\mbox{resp. }s(c\wedge y)-s(x\wedge y)\right),
			\]
			and $g(x)>0$ for some $x \in \elr$.
		\end{enumerate}
	\end{proposition}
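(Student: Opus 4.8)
The plan is to mirror the proof of Theorem \ref{t:main1}, letting the two boundary data $g(\ell)=0$ and $\frac{d^+g(\ell+)}{ds}=0$ play the role that $s(\ell)=-\infty$ (together with \eqref{e:huill}) played there. I will treat only the non-resp.\ case, $v_c(x,y)=s(x\vee y)-s(c\vee y)$ attached to the boundary $\ell$; the statement for $r$ is symmetric. The structural fact used throughout is the one isolated in the proof of Proposition \ref{p:semiui}: for an \Ito-Watanabe pair, $N:=g(X)-\int_0^\cdot g(X_s)\,dA_s$ is a local martingale, and the Revuz measure of the increasing part $g\cdot A$ of the submartingale $g(X)$ is $g\,\mu_A$, which for an $s$-convex $g$ is exactly the second $s$-derivative measure $d\!\left(\frac{d^+g}{ds}\right)$ (as in \eqref{e:gsder}).

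For \Implies{p:eq1}{p:eq2}: since $g(\ell)=0$, $g$ is uniformly integrable near $\ell$, so Proposition \ref{p:semiui}(1) applies and \eqref{e:huil} holds with $g(\ell)<\infty$. I would first record that $\int_\ell^c g\,\mu_A<\infty$: because $\frac{d^+g(\ell+)}{ds}=0$ exists and $g$ is $s$-convex, the $g\mu_A$-mass of $(\ell,c)$ equals the increment $\frac{d^+g(c)}{ds}-\frac{d^+g(\ell+)}{ds}=\frac{d^+g(c)}{ds}<\infty$. With this and the two vanishing boundary data I may invoke \eqref{g:uilder}, which collapses to
$g(b)=\int_\ell^b\!\big(s(b)-s(y)\big)g(y)\,\mu_A(dy)$ for every $b\in\elr$. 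The remaining step is purely algebraic: forming $g(x)-g(c)$ from this identity and splitting the range at $c$ (the cases $y<c$, $c\le y<x$, $y\ge x$ for $x>c$, and symmetrically for $x<c$) reproduces $\int_\ell^r\big(s(x\vee y)-s(c\vee y)\big)g(y)\,\mu_A(dy)$, i.e.\ \eqref{e:IEg_tr_fin}. Positivity somewhere is automatic from $g\in\cS^+$.

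For \Implies{p:eq2}{p:eq1}: equation \eqref{e:IEg_tr_fin} is \eqref{e:genelIE} with $\kappa=0$, so Lemma \ref{l:mainR} immediately yields that $g$ does not change sign (part (4)), hence $g\ge 0$ as $g>0$ somewhere; that $g$ is $s$-convex (part (2)); and that $g$ is increasing (part (5)). The construction of the \Ito-Watanabe pair is then verbatim the argument of Theorem \ref{t:main1}, \Implies{t:eq2}{t:eq1}: the potentials of $g\cdot A$ and of the Doob--Meyer functional $B$ of $g(X)$ agree on every $(a,b)$ by \eqref{e:EhabR} and \eqref{e:potentialA}, so $B$ and $g\cdot A$ are indistinguishable, $g(X)\exp(-A)$ is a local martingale, and Theorem \ref{t:submvanish} upgrades this to $g\in\cS^+$. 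The derivative condition then follows from Lemma \ref{l:mainR}(6), which applies since $g(\ell)=0<\infty$ and gives $\frac{d^+g(\ell+)}{ds}=\kappa=0$.

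I expect the genuinely delicate point to be the assertion $s(\ell)>-\infty$ in \Implies{p:eq2}{p:eq1}, because the same integral equation \eqref{e:IEg_tr_fin} with $g(\ell)=0$ also admits strictly positive solutions when $s(\ell)=-\infty$ — precisely the objects of Theorem \ref{t:main1} — so $g(\ell)=0$ must be read as an imposed datum at a finite-scale boundary rather than as the limit at a natural one. I would isolate $s(\ell)>-\infty$ as follows: the implicit integrability $\int_\ell^r|v_c(x,y)g(y)|\,\mu_A(dy)<\infty$ forces $\int_\ell^c g\,\mu_A<\infty$ (take $x>c$, where $v_c(x,\cdot)\equiv s(x)-s(c)$ on $(\ell,c)$), so that $\frac{d^+g}{ds}(b)=\int_\ell^b g\,\mu_A$ is finite and the recovery of $g$ by a double integration against $s$ is proper exactly when $s(\ell)>-\infty$; the complementary regime $s(\ell)=-\infty$ is the one absorbed by \eqref{e:huill} and Theorem \ref{t:main1}. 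Apart from this dichotomy, the only technical care needed is the justification, by monotone convergence as in Proposition \ref{p:semiui}, of the passage $a\to\ell$ in \eqref{e:Hab} and of the interchange of limit and integral leading to \eqref{g:uilder}.
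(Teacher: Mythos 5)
Your direction \Implies{p:eq2}{p:eq1} coincides with the paper's (reduce to the argument of Theorem \ref{t:main1} via Lemma \ref{l:mainR}, then read off the derivative condition from (\ref{e:gsder})). Your direction \Implies{p:eq1}{p:eq2}, however, takes a genuinely different route, and it contains one genuine gap. The paper does not argue through (\ref{g:uilder}): it performs an $h$-transform with $h=s-s(\ell)$, under which the new scale satisfies $s^h(\ell)=-\infty$, applies Theorem \ref{t:main1} to $(g/h,A)$ under $P^h$, and then undoes the transform by a limiting argument as $c\rar\ell$ that consumes the hypotheses $g(\ell)=0$ and $\frac{d^+g(\ell+)}{ds}=0$. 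The reason for this detour is precisely the step you treat as free: to invoke (\ref{g:uilder}) you must first know $\int_\ell^c g(y)\mu_A(dy)<\infty$, and your justification --- that $g\mu_A$ is the second $s$-derivative measure of $g$, ``as in (\ref{e:gsder})'' --- is circular, because (\ref{e:gsder}) is proved in Lemma \ref{l:mainR} only for solutions of the integral equation, which is exactly what you are trying to show $g$ is. What is actually available at this stage, from (\ref{e:huil}), is only the weighted bound $\int_\ell^c (s(y)-s(\ell))g(y)\mu_A(dy)<\infty$, whose weight vanishes at $\ell$ and therefore does not yield the unweighted integrability. The identification you need (Revuz measure of the increasing part of $g(X)$ equals the second $s$-derivative measure, for an arbitrary \Ito-Watanabe pair) is true and provable --- it is the correspondence the paper itself establishes for $s^2$ in Theorem \ref{t:ibp} via the argument of Theorem VII.3.12 in \cite{RY}, and it can also be extracted from (\ref{e:Hab}) by letting $b\downarrow x$ and a Fubini argument --- but you must supply this proof; once supplied, your route is shorter and more elementary than the paper's $h$-transform.

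On the point you flag as delicate in \Implies{p:eq2}{p:eq1}, you are right to be suspicious, and in fact more careful than the paper: statement \ref{p:eq2} genuinely cannot imply $s(\ell)>-\infty$. For Brownian motion on $\bbR$ with $A_t=t/2$ (so $\mu_A(dy)=dy$), the function $g(x)=e^x$ has $g(\ell)=0$, is positive, and solves (\ref{e:IEg_tr_fin}), yet $s(\ell)=-\infty$. The same example shows that your proposed resolution (``the recovery of $g$ by double integration against $s$ is proper exactly when $s(\ell)>-\infty$'') cannot work: there too $\int_\ell^c g\,d\mu_A<\infty$, $\frac{d^+g}{ds}$ is finite, and $g(b)=\int_\ell^b(s(b)-s(y))g(y)\mu_A(dy)$ holds. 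The paper's own proof of this direction is equally silent on $s(\ell)>-\infty$; the proposition has to be read with $s(\ell)>-\infty$ as standing data rather than as a conclusion to be derived from the integral equation. So you lose nothing relative to the paper here, but the dichotomy you sketch should be deleted rather than repaired.
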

\begin{proof} Suppose that $g(\ell)=0$ and $s(\ell)$ is finite. The other case can be handled similarly. 
	
	\Implies{p:eq1}{p:eq2}: First observe that $g$ is $s$-convex and therefore increasing as $g(\ell)=0$. Idea of the proof is to pass to an absolutely continuous measure via an $h$-transform so that the scale function of the diffusion becomes infinite and $\frac{g}{h}$ remains u.i. near $\ell$. To this end consider $h(x)=s(x)-s(\ell)$ and let $P^h$ denote the law of the $h$-transformed process defined by $h(x)P^{h,x}(C)= E^x[h(X_t)\chf_{\{t<\zeta\}};C]$ for any $C \in \cF_t$ (see Section 62 of \cite{GTMP} or Paragraphs 31 and 32 in Chap. II of \cite{BorSal}). Then
	\[
	s^h(x):=\int_c^x \frac{1}{(s(z)-s(\ell))^2}ds(z)=\frac{1}{s(c)-s(\ell)}- \frac{1}{s(x)-s(\ell)}
	\]
	is a scale function under $P^h$ (see Paragraph 31 in Chap. II of \cite{BorSal}). Clearly, $s^h(\ell)=-\infty$. Moreover, $s^h(r)<\infty$. That is, $X$ converges to $r$ with probability $1$ under $P^h$. 
	
	Let us next see that $\frac{g}{h}$ is  u.i near $\ell$ under $P^h$. Indeed, for any $\ell<a<x<b<r$,
	\[
	\lim_{a \rar \ell}h(x)E^{h,x}\left[\frac{g(X_{T_{ab}})}{h(X_{T_{ab}})}\right]=\lim_{a \rar \ell}E^x[g(X_{T_{ab}})]=E^x[g(X_{T_{b}})]=g(b)P^x(T_b<\zeta),
	\]
	where the last equality follows from the hypothesis that $g(l)=0$. However, 
	\[
	P^x(T_b<\zeta)= \frac{E^x[h(X_{T_b}); T_b<\zeta]}{h(b)}=\frac{h(x)}{h(b)}P^{h,x}(T_b<\zeta)=\frac{h(x)}{h(b)}
	\]
	since under $P^h$ $X$ converges to $r$ and $x<b$. Therefore, 
	\[
	\lim_{a \rar \ell}E^{h,x}\left[\frac{g(X_{T_{ab}})}{h(X_{T_{ab}})}\right]=\frac{g(b)}{h(b)},
	\]
	which establishes the desired semi-uniform integrability. 
	
	Moreover, due to the above absolute continuity relationship, $\frac{g(X)}{h(X)}\exp(-A)$ is a $P^{h,x}$-local martingale. Therefore, the conditions of Theorem \ref{t:main1} are satisfied and one has
	\[
	\frac{g(x)}{h(x)}=\frac{g(c)}{h(c)}+ \int_{\ell}^r (s^h(x \vee y)-s^h(c \vee y)) \frac{g(y)}{h(y)} \mu^h_A(dy),
	\]
	where $c\in \elr$ is arbitrary and $\mu^h_A$ is the Revuz measure of $A$ after the $h$-transform. Since the speed measure of the $h$-transformed process $m^h$ is given by $m^h(dy)= h^2(y)m(dy)$ (see, once more, Paragraph 31 in Chap. II of \cite{BorSal}), it follows that  $\mu^h_A(dy)=h^2(y) \mu_A(dy)$. Therefore, assuming without loss of generality that $x>c$ and using the explicit form of $s^h$, one obtains
	\be \label{e:goverh}
	\begin{split}
\frac{g(x)}{s(x)-s(\ell)}&=\frac{g(c)}{s(c)-s(\ell)}
+\int_{\ell}^x \frac{s(x)-s(c \vee y)}{(s(x)-s(\ell))(s(c\vee y)-s(\ell))}g(y)(s(y)-s(\ell))\mu_A(dy)\\
&=\frac{g(c)}{s(c)-s(\ell)}
+\int_{\ell}^c \frac{(s(x)-s(c))(s(y)-s(\ell))}{(s(x)-s(\ell))(s(c)-s(\ell))}g(y)\mu_A(dy)\\
&+\int_c^x \frac{s(x)-s(y)}{s(x)-s(\ell)}g(y)\mu_A(dy)
	\end{split}
	\ee
If one considers the limit of the right hand side of the above, $\lim_{c \rar \ell}\frac{g(c)}{s(c)-s(\ell)}=0$ since $g(\ell)= \frac{d^+g(\ell+)}{ds}=0$ and $\frac{g(c)}{s(c)-s(\ell)}\leq  \frac{d^+g(c)}{ds}$.  To understand the remaining limit consider $E^{\frac{c+\ell}{2}}[g(X_{T_c}) ]$. Since $g$ is uniformly integrable near $\ell$ and $g(\ell)=0$,
\bean
g(c)\frac{s(\frac{c+\ell}{2})-s(\ell)}{s(c)-s(\ell)}&=& g\left(\frac{c+\ell}{2}\right)+\int_{\ell}^c\frac{\left(s(\frac{c+\ell}{2}\wedge y)-s(\ell)\right)\left(s(c)-s(\frac{c+\ell}{2}\vee y)\right)}{s(c)-s(\ell)}g(y)\mu_A(dy)\\
&\geq&g\left(\frac{c+\ell}{2}\right)+\int_{\ell}^{\frac{c+\ell}{2}}\frac{\left(s(y)-s(\ell)\right)\left(s(c)-s(\frac{c+\ell}{2})\right)}{s(c)-s(\ell)}g(y)\mu_A(dy).
\eean

Thus,
\bean
\int_{\ell}^{\frac{c+\ell}{2}}\frac{\left(s(y)-s(\ell)\right)}{s(c)-s(\ell)}g(y)\mu_A(dy)&\leq &-\frac{g(c)}{s(c)-s(\ell)}+ \frac{g(c)-g(\frac{c+\ell}{2})}{s(c)-s(\frac{c+\ell}{2})}\\
&\leq&-\frac{g(c)}{s(c)-s(\ell)}+ \frac{d^+g(c)}{ds},
\eean
where the last inequality follows from the fact that $g$ is $s$-convex as well as increasing. The right hand side converges to $0$ as $c \rar \ell$ by the hypothesis. Therefore,
\[
\lim_{c \rar \ell}\int_{\ell}^c \frac{(s(x)-s(c))(s(y)-s(\ell))}{(s(x)-s(\ell))(s(c)-s(\ell))}g(y)\mu_A(dy)=0
\]
and, consequently,
\[
g(x)= \int_{\ell}^x \frac{s(x)-s(y)}{s(x)-s(\ell)}g(y)\mu_A(dy).
\]
This implies (\ref{e:IEg_tr_fin}).

\Implies{p:eq2}{p:eq1}: This follows exactly the same lines of the proof of the corresponding statement in Theorem \ref{t:main1}. That $\frac{d^+g(\ell+)}{ds}=0$ is a consequence of (\ref{e:gsder}) since $\mu_A$ does not charge $\{\ell\}$.
\end{proof}

Note that if $s(\ell)>-\infty$ and $g$ is $s$-convex with $0<g(\ell)<\infty$ and $\frac{d^+g(\ell+)}{ds}<\infty$, one can consider $\tilde{g}(x):=g(x)-g(\ell)-\frac{d^+g(\ell+)}{ds}(s(x)-s(\ell))$. Then, $\tilde{g}$ is $s$-convex with $\tilde{g}(\ell)=\frac{d^+g(\ell+)}{ds}=0$. This observation leads to the following theorem, whose proof being similar to that of Theorem \ref{t:main1} is delegated to the Appendix.
\begin{theorem}\label{t:main2}
	Let $g$ be a  Borel measurable function on $\bfE$ and $A$ a CAF with Revuz measure $\mu_A$. Then the following are equivalent:
	\begin{enumerate}[label=(\arabic*),ref=(\arabic*)]
		\item \label{t:main:eq1} $(g,A)$ is an \Ito-Watanabe pair and $g$ is uniformly integrable near $\ell$ (resp. near $r$) with $\frac{d^+g(\ell+)}{ds}<\infty$ (resp.  $\frac{d^-g(r-)}{ds}<\infty$) and $s(\ell)>-\infty$ (resp.  $s(r)<\infty$).
		\item \label{t:main:eq2} $s(\ell)>-\infty$ (resp. $s(r)<\infty$),  $g$ solves the integral equation\footnote{Any solution is implicitly assumed to be integrable in the sense that $\int_{\ell}^r |v_c(x,y)g(y)|\mu_A(dy)<\infty$ for all $x \in \elr$.}
		\be \label{e:IEh_general}
		g(x)=g(c)+ \kappa (s(x)-s(c))+ \int_{\ell}^r v_c(x,y)g(y)\mu_A(dy),
		\ee
		where $\kappa= \frac{d^+g(\ell+)}{ds}$ (resp. $\kappa=\frac{d^-g(r-)}{ds}$),
		\[
		v_c(x,y)= s(x\vee y)-s(c\vee y)\; \left(\mbox{resp. }s(c\wedge y)-s(x\wedge y)\right)
		\]
		and $g(x)\geq 0$ for all $x \in \elr$.
	\end{enumerate}
\end{theorem}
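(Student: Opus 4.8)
The plan is to follow the proof of Theorem \ref{t:main1} essentially line by line; the only structural change is that $s(\ell)$ is now finite, so the killed potential kernel $u(b;x,y)$ no longer collapses to $s(b)-s(x\vee y)$ and the boundary slope $\kappa=\frac{d^+g(\ell+)}{ds}$ is no longer forced to vanish. The motivating remark preceding the theorem suggests passing to $\tilde g(x):=g(x)-g(\ell)-\kappa(s(x)-s(\ell))$, which is $s$-convex with $\tilde g(\ell)=\frac{d^+\tilde g(\ell+)}{ds}=0$ and is thus a candidate for Proposition \ref{p:hhp0}; however $\tilde g$ may vanish on an interval abutting $\ell$ (exactly when $\mu_A$ charges no neighbourhood of $\ell$) and then fails to lie in $\cS^+$, so I would argue by direct computation rather than reduction.

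\Implies{t:main:eq1}{t:main:eq2}: Since $(g,A)$ is an \Ito--Watanabe pair, $g\in\cS^+$ is $s$-convex and non-negative; as $g$ is u.i.\ near $\ell$ with $s(\ell)>-\infty$, Proposition \ref{p:semiui} gives $g(\ell)<\infty$ together with (\ref{e:huil}), in which $u(b;x,y)=\frac{(s(x\wedge y)-s(\ell))(s(b)-s(x\vee y))}{s(b)-s(\ell)}$. The crucial preliminary is $\int_\ell^c g\,d\mu_A<\infty$. Writing $B$ for the CAF with $g(X)-B$ a local martingale, one has $dB_t=g(X_t)\,dA_t$ and so $\mu_B=g\,\mu_A$; comparing $E^x[B_{T_{ab}}]$ with the Green's-function form of $E^x[g(X_{T_{ab}})]-g(x)$ identifies $\mu_B$ with the second-$s$-derivative measure of $g$, whence $\frac{d^+g(x)}{ds}=\kappa+\int_{(\ell,x]}g\,d\mu_A$; finiteness of the left side and of $\kappa$ then forces the integrability. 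With this in hand (\ref{g:uilder}) applies and rearranges to $g(b)=g(\ell)+\kappa(s(b)-s(\ell))+\int_\ell^b(s(b)-s(z))g(z)\mu_A(dz)$. Substituting this into the representation
\[
g(x)=g(\ell)\tfrac{s(b)-s(x)}{s(b)-s(\ell)}+g(b)\tfrac{s(x)-s(\ell)}{s(b)-s(\ell)}-\int_{\ell}^{b}u(b;x,y)g(y)\mu_A(dy)
\]
and simplifying, the $g(\ell)$-terms collapse to $g(\ell)$, the $\kappa$-terms to $\kappa(s(x)-s(\ell))$, and the combined integral kernel has numerator $(s(b)-s(\ell))(s(x)-s(y))$ for $y\le x$ and $0$ for $y>x$; this leaves $g(x)=g(\ell)+\kappa(s(x)-s(\ell))+\int_\ell^x(s(x)-s(y))g(y)\mu_A(dy)$. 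Subtracting the same identity at $c$ turns the kernel into $v_c(x,y)=s(x\vee y)-s(c\vee y)$ and produces (\ref{e:IEh_general}); convergence holds because $\int_\ell^c g\,d\mu_A<\infty$ controls $y<x\wedge c$ while $v_c(x,\cdot)$ vanishes for $y>x\vee c$.

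\Implies{t:main:eq2}{t:main:eq1}: This repeats the corresponding argument of Theorem \ref{t:main1}. By Lemma \ref{l:mainR}, (\ref{e:EhabR}) exhibits $g$ as $s$-convex with non-negative second-$s$-derivative measure $g\,\mu_A$ (here $g\ge0$ excludes $O^-$), so $g(X^{T_{ab}})$ is a bounded, hence u.i., submartingale and $g$ is subharmonic. Comparing the potential of $g\cdot A$ with that of the Doob--Meyer CAF $B$ through (\ref{e:EhabR}), (\ref{e:potentialA}) and Theorem IV.2.13 in \cite{BG} shows $B$ and $g\cdot A$ are indistinguishable, so after integration by parts $g(X)\exp(-A)$ is a local martingale; provided $g\not\equiv0$, Theorem \ref{t:submvanish} upgrades this to $g\in\cS^+$, making $(g,A)$ an \Ito--Watanabe pair. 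Uniform integrability near $\ell$ is immediate since $g$ is bounded on every $(\ell,b)$, and $\frac{d^+g(\ell+)}{ds}=\kappa$ follows from (\ref{e:gsder}) because $\mu_A$ does not charge $\{\ell\}$.

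The principal obstacle is the integrability step $\int_\ell^c g\,d\mu_A<\infty$ in \Implies{t:main:eq1}{t:main:eq2}: this is where the finiteness of the boundary slope $\kappa$ is converted into an integrability statement, and the validity of (\ref{g:uilder}), the convergence of the defining integral, and the very presence of the linear term $\kappa(s(x)-s(c))$ all depend on it. Everything else is the same bookkeeping as in Theorem \ref{t:main1}, carried out with the full kernel $u(b;x,y)$ in place of its $s(\ell)=-\infty$ simplification.
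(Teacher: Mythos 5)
Your proof is correct, and in the direction \Implies{t:main:eq1}{t:main:eq2} it takes a genuinely different route from the paper's. The paper argues by reduction to Proposition \ref{p:hhp0}: it tilts $g$ to $\tilde g(x)=g(x)-g(\ell)-\kappa(s(x)-s(\ell))$, introduces the time-changed functional $B$ with $dB_t=\frac{g(X_t)}{\tilde g(X_t)}dA_t$ (so $\mu_B=\frac{g}{\tilde g}\mu_A$), applies Proposition \ref{p:hhp0} to the pair $(\tilde g,B)$, and translates the resulting equation back to $g$. You instead extract the integrability $\int_\ell^c g(y)\mu_A(dy)<\infty$ from the finiteness of $\kappa$ (by identifying the Revuz measure $g\,\mu_A$ of the Doob--Meyer functional of $g(X)$ with the second-$s$-derivative measure of $g$), then feed (\ref{g:uilder}) of Proposition \ref{p:semiui} into the killed-potential representation and close with kernel algebra, which indeed collapses to $g(x)=g(\ell)+\kappa(s(x)-s(\ell))+\int_\ell^x(s(x)-s(y))g(y)\mu_A(dy)$ exactly as you state. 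Your route buys something real: the reservation you raise about the reduction is precisely a gap in the paper's own argument, because when $\mu_A((\ell,x_0))=0$ for some $x_0>\ell$ the function $\tilde g$ vanishes identically on $(\ell,x_0]$, so $\tilde g\notin\cS^+$, $(\tilde g,B)$ is not an \Ito-Watanabe pair, and $B$ itself is infinite as soon as $A$ increases on $\{\tilde g(X)=0\}$ (take $\mu_A=\epsilon_{x_0}$, so that $dA$ is carried by $\{X=x_0\}$ where $\tilde g=0$); Proposition \ref{p:hhp0} is then inapplicable, whereas your direct computation is insensitive to this degeneracy. In the converse direction your argument is essentially the paper's Theorem \ref{t:main1} template applied to $g$ itself (using $g\ge0$ together with (\ref{e:EhabR}) for $s$-convexity, and Theorem IV.2.13 of \cite{BG} to get $\mu_B=g\,\mu_A$), whereas the paper detours once more through $\tilde g$; both work, yours with one fewer step. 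One caveat you share with the paper, and handle more carefully: condition \ref{t:main:eq2} as written admits $g\equiv 0$, which is not in $\cS^+$, so your explicit proviso $g\not\equiv 0$ before invoking Theorem \ref{t:submvanish} is needed.
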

\begin{remark}
Note that the non-negativity assumption is needed since solutions of (\ref{e:genelIE}) can hit $0$ and change sign when $\kappa \neq 0$ in view of Remark \ref{r:gsign}.
\end{remark}

The integral equation (\ref{e:genelIE}) typically needs two independent boundary or initial conditions to admit a unique solution. Fixing the value of $g(c)$ in (\ref{e:genelIE}) handles one of these conditions. However, (\ref{e:gsder}) also shows that $\frac{d^+g(\ell)}{ds}=\kappa$ when $g$ is uniformly integrable near $\ell$ and $s(\ell)$ is finite. That is, there is a second initial boundary condition implicit in the equation and one should expect uniqueness by fixing the value of $g(c)$. 
\begin{theorem}
	\label{t:uniqueIE}
	Let $c\in \elr$ be fixed and $a \in (0,\infty)$. Then there exists at most one solution to 
	\be \label{e:ieunique}
	g(x)=a+ \kappa (s(x)-s(c)) +\int_{\ell}^r v_c(x,y)g(y)\mu_A(dy),
	\ee
	where $\mu_A$ is the Revuz measure associated with a CAF $A$ and $v_c(x,y)$ is either $s(x\vee y)-s(c\vee y)$ for all $(x,y) \in \elr \times \elr$ or $s(c\wedge y)-s(x \wedge y)$ for all $(x,y)\in \elr \times \elr$ such that $g$ is uniformly integrable near $\ell$ (resp. $r$) whenever $v_c(x,y)=s(x\vee y)-s(c\vee y)$ (resp. $v_c(x,y)=s(c\wedge y)-s(x \wedge y))$ and $s(\ell)=-\infty$ (resp. $s(r)=\infty)$.
\end{theorem}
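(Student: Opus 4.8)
The plan is to argue by contradiction through the difference of two solutions, reducing uniqueness to the assertion that the only sign-definite solution of the homogeneous equation that vanishes at $c$ is the trivial one. First I would suppose $g_1$ and $g_2$ both solve \eqref{e:ieunique} with the given $a$ and $\kappa$ and (in the case $v_c(x,y)=s(x\vee y)-s(c\vee y)$ with $s(\ell)=-\infty$; the other case is symmetric) are both uniformly integrable near $\ell$, and set $h:=g_1-g_2$. Subtracting the two copies of \eqref{e:ieunique}, the constant $a$ and the term $\kappa(s(x)-s(c))$ cancel, so $h$ solves the homogeneous equation, i.e. \eqref{e:genelIE} with $\kappa=0$, which is precisely \eqref{e:IEh} for the constant $h(c)=g_1(c)-g_2(c)=0$. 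Integrability of $h$ against the kernel is inherited from that of $g_1$ and $g_2$ via the triangle inequality, so $h$ is a bona fide solution in the sense of the footnote to \eqref{e:IEh}.

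The key structural input is Lemma \ref{l:mainR}(4): since $h$ solves \eqref{e:genelIE} with $\kappa=0$, $h$ does not change sign on $\elr$. Replacing $h$ by $-h$ if necessary (which again solves the homogeneous equation and still vanishes at $c$), I may assume $h\ge 0$ throughout $\elr$. Now suppose, for contradiction, that $h\not\equiv 0$; then $h(x)>0$ for some $x\in\elr$. At this point I would invoke the implication \Implies{t:eq2}{t:eq1} of Theorem \ref{t:main1}: $h$ solves \eqref{e:IEh}, we are in the regime $s(\ell)=-\infty$, and $h$ is strictly positive somewhere, so $(h,A)$ is an \Ito-Watanabe pair and in particular $h\in\cS^+$. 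By definition of $\cS^+$ this forces $h>0$ on all of $(\ell,r)$, contradicting $h(c)=0$. Hence $h\equiv 0$ and $g_1=g_2$.

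The only place the hypotheses $s(\ell)=-\infty$ and uniform integrability near $\ell$ really bite is in making Theorem \ref{t:main1} available: they guarantee that a nonnegative nonzero homogeneous solution is genuinely an element of $\cS^+$, hence strictly positive, which is what produces the contradiction with $h(c)=0$. The step I expect to require the most care is verifying that $h$ meets every hypothesis of Theorem \ref{t:main1} after the subtraction — in particular that the $\kappa$-term cancels exactly and that the integrability condition survives — rather than any hard estimate. As a self-contained alternative avoiding Theorem \ref{t:main1}, one can instead note from the derivative formula \eqref{e:gsder} (with $\kappa=0$) that a nonnegative $h$ is $s$-nondecreasing, so $h(c)=0$ forces $h\equiv 0$ on $[\ell,c]$; then for $x\ge c$ the equation collapses to the Volterra form $h(x)=\int_c^x(s(x)-s(y))h(y)\mu_A(dy)$, and a Gronwall iteration on subintervals $[c,b]$ with $(s(b)-s(c))\mu_A([c,b])<1$ propagates $h\equiv 0$ rightward over all of $[c,r)$, the only delicate point being atoms of $\mu_A$, controlled by shrinking $b$ toward the left endpoint of each step using the continuity of $s$.
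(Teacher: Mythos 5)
For the regime you actually treat --- $v_c(x,y)=s(x\vee y)-s(c\vee y)$ with $s(\ell)=-\infty$, and its mirror image --- your argument is correct and genuinely different from the paper's. The paper stops the local martingale $(f-g)(X)\exp(-A)$ at $T_c$, using $P^x(T_c<T_\ell)=1$ when $s(\ell)=-\infty$ to get $f=g$ on $(\ell,c)$, and then stops at $T_{ay}$ to cover $(c,r)$; you instead feed the difference $h$ into part (4) of Lemma \ref{l:mainR} and then into Theorem \ref{t:main1}, whose conclusion that $(h,A)$ is an \Ito-Watanabe pair forces $h\in\cS^+$, i.e.\ strict positivity on all of $\elr$, contradicting $h(c)=0$. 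This is sound (the integrability footnote is indeed inherited by $h$), and it actually proves something slightly stronger, because statement \ref{t:eq2} of Theorem \ref{t:main1} never invokes uniform integrability of $g_1,g_2$: contrary to your closing remark, that hypothesis does not bite anywhere in your argument --- only $s(\ell)=-\infty$ does.

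The genuine gap is one of scope. The clause ``$g$ is uniformly integrable near $\ell$ \emph{whenever} $v_c(x,y)=s(x\vee y)-s(c\vee y)$ \emph{and} $s(\ell)=-\infty$'' is a conditional restricting the solution class, not a standing hypothesis that $s(\ell)=-\infty$: when $s(\ell)>-\infty$ the theorem still asserts uniqueness, now among \emph{all} integrable solutions, and the paper devotes the entire second half of its proof to this case (there $\kappa=\frac{d^+f(\ell)}{ds}=\frac{d^+g(\ell)}{ds}$ by (\ref{e:gsder}), so $h$ is a homogeneous solution with $\frac{d^+h(\ell)}{ds}=0$ and $h(c)=0$, whence $h\equiv 0$ on $[\ell,c]$ by $s$-convexity, and the martingale argument finishes on $(c,r)$). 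Your ``the other case is symmetric'' only swaps the kernel and the endpoint, so the sub-case $s(\ell)>-\infty$ is simply absent, and your main route cannot reach it as written, because Theorem \ref{t:main1} requires $s(\ell)=-\infty$; the natural patch is Theorem \ref{t:main2}, which does apply to $h$ (after your sign flip $h\ge 0$, and with $\frac{d^+h(\ell+)}{ds}=0=\kappa$ by (\ref{e:gsder})) and yields the same contradiction with $h(c)=0$. Ironically, your ``self-contained alternative'' --- monotonicity of a nonnegative homogeneous solution from (\ref{e:gsder}) forcing $h\equiv 0$ on $(\ell,c]$, then the Volterra--Gronwall iteration rightward --- uses neither $s(\ell)=-\infty$ nor uniform integrability, so promoting that sketch (with the propagation-to-$r$ step spelled out) to be the main proof would cover every case of the theorem at once, by an argument more elementary than the paper's.
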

\begin{proof}
Proof will be given when $v_c(x,y)=s(x\vee y)-s(c\vee y)$, the other case being analogous. First consider the case $s(\ell)=-\infty$.  Let $f$ and $g$ be two solutions of (\ref{e:ieunique}) that are uniformly integrable near $\ell$. Then, for any $x \in (\ell,c)$,
	\[
	f(x)-g(x)=E^x\left[(f(X_{T_c})-g(X_{T_c}))\exp(-A_{T_c})\right]=0,
	\]
	since $P^x(T_c<T_{\ell})=1$ when $s(\ell)=-\infty$. This shows that $f$ and $g$ coincide for any $x <c$. 
	
	Next consider $a<x<c<y<r$. Using the semi-uniform integrability of $f-g$, one can then conclude
	\[
	0=f(x)-g(x)=E^x\left[(f(X_{T_{ay}})-g(X_{T_{ay}}))\exp(-A_{T_{ay}})\right]=E^x\left[\chf_{[T_y<T_a]}(f(y)-g(y))\exp(-A_{T_{ay}})\right].
	\]
	Hence, $f$ and $g$ coincide on $(c,r)$, too.
	
	Now, suppose $s(\ell)>-\infty$ and $f$ and $g$ are two solutions of (\ref{e:ieunique}). Then, (\ref{e:gsder}) yields \[
	\kappa= \frac{d^+g(\ell)}{ds}= \frac{d^+f(\ell)}{ds}.
	\]
	Define $h=f-g$ and observe that $h$ satisfies
	\[
	h(x)= \int_{\ell}^rv_c(x,y) h(y)\mu_A(dy), \qquad h(c)=0.
	\]
	Then, Lemma \ref{l:mainR} shows that $h$ does not change its sign on $(\ell,c)$. Without loss of generality suppose $h\geq 0$ on $(\ell,c)$.  Another application of Lemma \ref{l:mainR} now yields $h$ is $s$-convex. Moreover,
	$\frac{d^+h(\ell)}{ds}=\frac{d^+f(\ell)}{ds}-\frac{d^+g(\ell)}{ds}=0$. However, together with the condition that $h(c)=0$, this implies $h$ must be identically $0$ on $[\ell,c]$. That is, $f$ and $g$ coincide on $[\ell,c]$. The same martingale argument above  shows that they coincide on $[\ell,r)$.
\end{proof}
The following integration-by-parts type result regarding the solutions of (\ref{e:ieunique}) will be instrumental in Section \ref{s:transform}.
\begin{theorem} \label{t:ibp}
Suppose  $g$ solves (\ref{e:ieunique}). Then
\[
dg(X_t)s(X_t)=s(X_t)dg(X_t)+g(X_t)ds(X_t)+ \frac{d^-g(X_t)}{ds}dB_t,
\]
where $B$ is a CAF whose Revuz measure is $2s(dy)$.
\end{theorem}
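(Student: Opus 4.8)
The plan is to read the asserted identity as the integration-by-parts (It\^o product) formula for the two continuous semimartingales $Y_t:=g(X_t)$ and $Z_t:=s(X_t)$, the left-hand side being understood as $d\big(g(X_t)s(X_t)\big)$. Since $X$ is continuous and $s$ is continuous on $\elr$, $Z$ is a continuous local martingale; that $Y=g(X)$ is a continuous semimartingale will follow from the regularity of $g$ recorded below. The continuous product rule then gives
\[
d\big(g(X_t)s(X_t)\big)=s(X_t)\,dg(X_t)+g(X_t)\,ds(X_t)+d[g(X),s(X)]_t,
\]
so everything reduces to identifying the covariation $[g(X),s(X)]$ with $\int_0^{\cdot}\frac{d^-g}{ds}(X_u)\,dB_u$, where $B$ is a CAF with Revuz measure $2s(dy)$.

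First I would record that, because $g$ solves (\ref{e:ieunique}), Lemma \ref{l:mainR} shows that $\frac{d^-g}{ds}$ exists everywhere and, by (\ref{e:gsder}), is the difference of two nondecreasing functions; hence $g$ is locally a difference of $s$-convex functions and can be written $g=\phi\circ s$ with $\phi$ a difference of convex functions. Applying the It\^o--Tanaka (It\^o--Meyer) formula to $\phi(s(X))$ identifies the martingale part of $g(X)$ as $\int_0^{\cdot}\phi'_-(s(X_u))\,ds(X_u)=\int_0^{\cdot}\frac{d^-g}{ds}(X_u)\,ds(X_u)$, the finite-variation part being the compensating CAF. Since $s(X)$ is a continuous local martingale and covariation ignores finite-variation parts, with $B:=[s(X)]$ one obtains
\[
[g(X),s(X)]_t=\Big[\int_0^{\cdot}\tfrac{d^-g}{ds}(X_u)\,ds(X_u),\,s(X)\Big]_t=\int_0^t\frac{d^-g}{ds}(X_u)\,dB_u.
\]

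The heart of the matter is then to show that $B=[s(X)]$ is a CAF whose Revuz measure is $2s(dy)$. That $B$ is a CAF follows from the continuity of $s(X)$ together with the additivity of quadratic variation and the Markov property; equivalently, $s(X)^2-B$ is a local martingale, so $B$ is the compensating CAF of the $s$-convex function $x\mapsto s(x)^2$, which exists by Theorem 51.7 in \cite{GTMP}. To compute $\mu_B$ I would localize at $T_{ab}$ for $\ell<a<x<b<r$: on $[0,T_{ab}]$ the process $s(X)$ is bounded and $E^x[B_{T_{ab}}]<\infty$, so $s(X)^2-B$ is a uniformly integrable martingale there, and using the exit law $P^x(T_a<T_b)=\frac{s(b)-s(x)}{s(b)-s(a)}$,
\[
E^x[B_{T_{ab}}]=E^x\big[s(X_{T_{ab}})^2\big]-s(x)^2=(s(x)-s(a))(s(b)-s(x)).
\]
On the other hand, (\ref{e:potentialA}) expresses $E^x[B_{T_{ab}}]$ as $\int_a^b\frac{(s(x\wedge y)-s(a))(s(b)-s(x\vee y))}{s(b)-s(a)}\mu_B(dy)$, and a direct evaluation (using $\int_a^x(s(y)-s(a))\,s(dy)=\tfrac12(s(x)-s(a))^2$ and its mirror image on $[x,b]$) shows the two expressions agree for all such $a,x,b$ precisely when $\mu_B(dy)=2s(dy)$. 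By the uniqueness of the Revuz measure this forces $\mu_B=2s(dy)$, and combining with the product rule above yields the claim.

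I expect the main obstacle to be this last identification of $\mu_B$. Two points require care: justifying the It\^o--Tanaka decomposition of $g(X)$, so that its martingale part is genuinely $\int\frac{d^-g}{ds}(X)\,ds(X)$ (this rests on the regularity of $g$ from Lemma \ref{l:mainR}, which makes $g$ a difference of $s$-convex functions), and carrying out the matching computation that pins down $\mu_B=2s(dy)$ via (\ref{e:potentialA}). The remaining ingredients—the continuous product rule and the vanishing of the covariation against the finite-variation part—are standard, and the whole identity being local, no global integrability of the solution $g$ is needed beyond what holds on each $[0,T_{ab}]$.
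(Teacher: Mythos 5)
Your proof is correct, and it rests on the same two pillars as the paper's argument: (i) the quadratic variation of $s(X)$ is a CAF $B$ whose Revuz measure is $2s(dy)$, and (ii) an It\^o--Tanaka computation that produces the cross term $\frac{d^-g}{ds}(X)\,dB$. The execution differs in both steps, in ways worth noting. For (i), the paper obtains $\mu_B(dy)=2s(dy)$ by repeating the differentiation argument of Theorem VII.3.12 in \cite{RY} (the second $s$-derivative of the potential recovers the measure); you instead guess $2s(dy)$ and verify that its $u_{ab}$-potential reproduces $E^x[B_{T_{ab}}]=(s(x)-s(a))(s(b)-s(x))$ --- a correct and more explicit route, but be aware that your final appeal to ``uniqueness of the Revuz measure'' is really the statement that equality of $u_{ab}$-potentials for all $a<x<b$ forces equality of Radon measures, which is proved by exactly the \cite{RY} differentiation (or by Theorem IV.2.13 in \cite{BG}, as the paper uses elsewhere); so this ingredient is implicit in your proof rather than avoided. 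For (ii), the paper never It\^o-expands $g(X)$ itself: it applies It\^o--Tanaka to each kernel $s(X_t\vee y)$ for fixed $y$, multiplies by $g(y)\mu_A(dy)$, integrates using the integral equation (an implicit stochastic Fubini interchange), and then invokes (\ref{e:gsder}). You instead apply the Meyer--It\^o formula directly to $g=\phi\circ s$, which is legitimate because (\ref{e:gsder}) together with the integrability assumption $\int_{\ell}^x|g(y)|\mu_A(dy)<\infty$ makes $\frac{d^{\pm}g}{ds}$ locally of bounded variation, hence $\phi$ locally a difference of convex functions; the covariation identity then follows from bilinearity and the vanishing of covariation against finite-variation parts. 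Your organisation is arguably cleaner --- it isolates the product rule and avoids interchanging stochastic integration with the $\mu_A$-integral --- at the cost of invoking the difference-of-convex-functions form of It\^o--Tanaka rather than the plain convex case the paper uses.
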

\begin{proof}
	First note that there exists a CAF $B^0$ such that $s^2(X)-B^0$ is a local martingale by Theorem 51.2 in \cite{GTMP}. Thus, if $\ell <a<b<r$ then
	\[
	E^x[s^2(X_{T_{ab}})]=s^2(x)+E^x[B^0_{T_{ab}}]=s^2(x)+\int_a^b u_{ab}(x,y)\mu^0(dy),
	\]
	where $\mu^0$ is the Revuz measure of $B^0$ and $u_{ab}(x,y)= \frac{(s(x\wedge y)-s(a))(s(b)-s(x \vee y))}{s(b)-s(a)}$. Then repeating the same calculations in the proof of Theorem VII.3.12 in \cite{RY}, after replacing $Af(y)m(dy)$ therein by $\mu^0(dy)$, one obtains
	\[
	\frac{ds^2(x)}{ds}-\frac{ds^2(y)}{ds}=\int_x^y \mu^0(dy).
	\]
	That is, $\mu^0(dy)= 2s(dy)$. 
	
	Moreover, for any $y \in (l,r)$, \Ito-Tanaka formula (see, e.g., Theorem 68 in Chap. IV of \cite{Pro}) in conjuntion with $d[s(X),s(X)]_t=dB^0_t$ yields
	\[
	ds(X_t) s(X_t \vee y)= s(X_t \vee y)ds(X_t)+ s(X_t) ds(X_t \vee y) +  \chf_{[X_t> y]}dB^0_t.
	\]
	Thus, if $v_c(x,y)=s(x\vee y)-s(c\vee y)$,
	\[
	dg(X_t)s(X_t)= s(X_t)dg(X_t)+ g(X_t)ds(X_t)+ \kappa dB^0_t+\int_{\ell}^{X_t-} g(y)\mu_A(dy) dB_t^0.
	\]
However, $\int_{\ell}^{x-} g(y)\mu_A(dy)=\frac{d^-g(x)}{ds}$ by (\ref{e:gsder}), which	establishes the claim. The case of $v_c(x,y)=s(c \wedge y)-s(c\wedge x)$ is treated similarly. 
	\end{proof}
\section{Existence of solutions and further properties} \label{s:existence}
In view of Lemma \ref{l:mainR}  any solution of (\ref{e:IEhR}) is monotone and bounded on either $(\ell,c)$ or $(c,r)$.  The next result finds explicit monotone and semi-bounded solutions of (\ref{e:genelIE}) that go beyond the recurrent setting.
\begin{theorem} \label{t:semiinf_s}
	Suppose that  $A$ is a CAF. Then, for any $a \in (0,\infty)$ and $c \in \elr$ the following hold:
	\begin{enumerate}
		\item \label{i:semiinf:case1} The increasing function
		\[
		g_r(x):=\left\{\ba{ll}
		aE^x[\chf_{[T_c<T_{\ell}]}\exp(-A_{T_c})],& x\leq c,\\
		\frac{a}{E^c[\chf_{[T_x<T_{\ell}]}\exp(-A_{T_x})]}, & x>c,
		\ea\right.
		\]
		is the unique solution of 
	 \[
	 g(x)=a+ \kappa_r (s(x)-s(c))+\int_{\ell}^r (s(x \vee y)-s(c\vee y))g(y)\mu_A(dy),
	 \]
	 where $\kappa_r=\frac{d^+g_r(\ell+)}{ds}$. Moreover, $\kappa_r=0$ if $s(\ell)=-\infty$ or $A_{\zeta}=\infty$ a.s. on $[\zeta =T_{\ell}]$.
	 \item  The decreasing function
	 \[
	 g_{\ell}(x):=\left\{\ba{ll}
	 a E^x[\chf_{[T_c<T_{r}]}\exp(-A_{T_c})],& c< x,\\
	 \frac{a}{E^c[\chf_{[T_x<T_{r}]}\exp(-A_{T_x})]}, & x\leq c,
	 \ea
	 \right.
	 \]
	 is the unique solution of 
	 \[
	 g(x)=a+ \kappa_{\ell} (s(x)-s(c))+ \int_{\ell}^r (s(c \wedge y)-s(x\wedge y))g(y)\mu_A(dy).
	 \]
	 where $\kappa_{\ell}=\frac{d^-g_{\ell}(r-)}{ds}$. Moreover, $\kappa_{\ell}=0$ if $s(r)=-\infty$ or $A_{\zeta}=\infty$ a.s. on $[\zeta =T_{r}]$.
	\end{enumerate}
\end{theorem}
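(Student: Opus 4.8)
The plan is to build $g_r$ from the discounted hitting functional
\[
L(x,y):=E^x[\chf_{[T_y<T_\ell]}\exp(-A_{T_y})],
\]
and then recognise $(g_r,A)$ as an \Ito-Watanabe pair, after which Theorems \ref{t:main1}, \ref{t:main2} and \ref{t:uniqueIE} deliver the integral equation and uniqueness. First I would record the multiplicative identity $L(x,z)=L(x,w)L(w,z)$ for $\ell<x<w<z<r$: to reach $z$ before $\ell$ a continuous path must first reach the intermediate point $w$ before $\ell$, and the additivity of $A$ together with the strong Markov property at $T_w$ factorises the discount. Since $L(c,c)=1$ and, by regularity of $X$ and positivity of $\exp(-A)$ on $[t<\zeta]$, one has $0<L(x,y)\le P^x(T_y<T_\ell)\le 1$, the function $g_r$ is well defined, strictly positive, continuous at $c$ with $g_r(c)=a$, and increasing. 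The multiplicative identity then yields the single clean relation
\[
g_r(x)=L(x,z)\,g_r(z),\qquad \ell<x<z<r,
\]
checked case by case according to the position of $x,z$ relative to $c$.

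Next I would prove that $g_r(X)\exp(-A)$ is a local martingale. Fix $\ell<\alpha<x<\beta<r$. Splitting $[T_\beta<T_\ell]$ according to whether the exit of $(\alpha,\beta)$ occurs at $\beta$ or at $\alpha$ and applying the strong Markov property at $T_\alpha$ gives $L(x,\beta)=E^x[\chf_{[T_\beta<T_\alpha]}\exp(-A_{T_\beta})]+E^x[\chf_{[T_\alpha<T_\beta]}\exp(-A_{T_\alpha})]\,L(\alpha,\beta)$; multiplying by $g_r(\beta)$ and using $g_r(x)=L(x,\beta)g_r(\beta)$ together with $g_r(\alpha)=L(\alpha,\beta)g_r(\beta)$ turns this into $g_r(x)=E^x[g_r(X_{T_{\alpha\beta}})\exp(-A_{T_{\alpha\beta}})]$. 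As $g_r$ is bounded on every $(\ell,b)$ and $A$ is increasing, this identity upgrades to the statement that $g_r(X)\exp(-A)$ is a $P^x$-local martingale, so $(g_r,A)$ is an \Ito-Watanabe pair. Boundedness of $g_r$ on $(\ell,b)$ also gives uniform integrability near $\ell$, and $s$-convexity of $g_r$ with $g_r(\ell+)\le a<\infty$ forces $0\le \frac{d^+g_r(\ell+)}{ds}\le \frac{g_r(c)-g_r(\ell+)}{s(c)-s(\ell)}<\infty$ when $s(\ell)>-\infty$. Applying Theorem \ref{t:main1} when $s(\ell)=-\infty$ and Theorem \ref{t:main2} when $s(\ell)>-\infty$ then shows $g_r$ solves the stated equation with $\kappa_r=\frac{d^+g_r(\ell+)}{ds}$, and the at-most-one-solution statement of Theorem \ref{t:uniqueIE} (with $g(c)=a$) yields uniqueness.

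It remains to locate $\kappa_r$. When $s(\ell)=-\infty$ Theorem \ref{t:main1} already produces the equation with no linear term, so $\kappa_r=0$ (equivalently Lemma \ref{l:mainR}(3)). When $s(\ell)>-\infty$ the bound $L(x,c)\le P^x(T_c<T_\ell)=\frac{s(x)-s(\ell)}{s(c)-s(\ell)}\rar 0$ gives $g_r(\ell+)=0$, whence $\kappa_r=\lim_{x\rar\ell}\frac{g_r(x)}{s(x)-s(\ell)}$. Here I would pass to the $h$-transform with $h(x)=s(x)-s(\ell)$ exactly as in the proof of Proposition \ref{p:hhp0}: under $P^h$ one has $s^h(\ell)=-\infty$ (so $\ell$ is inaccessible), $\mu_A^h(dy)=h^2(y)\mu_A(dy)$, and the relation $\frac{g_r(x)}{h(x)}=\frac{a}{s(c)-s(\ell)}E^{h,x}[\exp(-A_{T_c})]$, so that $\kappa_r=\frac{a}{s(c)-s(\ell)}\lim_{x\rar\ell}E^{h,x}[\exp(-A_{T_c})]$. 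The multiplicative identity under $P^h$ shows $x\mapsto E^{h,x}[\exp(-A_{T_c})]$ is monotone, so the limit exists and vanishes precisely when the $A$ accumulated before reaching $c$ from the inaccessible boundary $\ell$ is a.s. infinite. Computing $\lim_{x\rar\ell}E^{h,x}[A_{T_c}]=\int_\ell^c(s^h(c)-s^h(y))h^2(y)\mu_A(dy)$ and simplifying with the explicit $s^h$ shows this integral is finite iff $\int_\ell^c(s(y)-s(\ell))\mu_A(dy)<\infty$, which by Theorem \ref{t:Afinite} is exactly the dichotomy for $A_\zeta$ on $[\zeta=T_\ell]$. Thus $A_\zeta=\infty$ a.s. on $[\zeta=T_\ell]$ forces the limit to be $0$, hence $\kappa_r=0$.

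The main obstacle is this last step: transferring the integral divergence into $\lim_{x\rar\ell}E^{h,x}[\exp(-A_{T_c})]=0$, since the blow-up $E^{h,x}[A_{T_c}]\rar\infty$ is not by itself sufficient. I would close it either by invoking the a.s. dichotomy of Theorem \ref{t:Afinite} (to conclude $A_{T_c}=\infty$ a.s. for the $P^h$-entrance law and then applying dominated convergence, using $\exp(-A_{T_c})\le 1$), or by arguing directly from the multiplicative structure that the product of the successive discounted arrival factors collapses to $0$ when the relevant integral diverges. The decreasing companion $g_\ell$ is handled by the mirror-image argument, interchanging $\ell$ with $r$, $T_\ell$ with $T_r$, and the two forms of $v_c$.
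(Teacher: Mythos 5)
Your first two paragraphs track the paper's proof almost exactly: the multiplicative identity $L(x,z)=L(x,w)L(w,z)$ from the strong Markov property, the representation $g_r(x)=L(x,z)g_r(z)$, the conclusion that $(g_r,A)$ is an \Ito-Watanabe pair with $g_r$ uniformly integrable near $\ell$, and the appeal to Theorems \ref{t:main1}, \ref{t:main2} and \ref{t:uniqueIE} for the equation and uniqueness. (Your two-sided exit argument for the local martingale property is a more detailed version of the paper's one-sided stopping at $T_y$, and your bound $g_r(x)\le aP^x(T_c<T_\ell)$ giving $g_r(\ell+)=0$ is the very argument the paper uses in the theorem following Theorem \ref{t:representation}.) The problem is the last claim, $\kappa_r=0$ when $s(\ell)>-\infty$ and $A_\zeta=\infty$ a.s.\ on $[\zeta=T_\ell]$, where you take a genuinely different, probabilistic route via the $h$-transform and leave a gap that you flag but do not close — and neither of your proposed closures works. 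Closure (a) misapplies Theorem \ref{t:Afinite}: that theorem describes $A_\zeta$ on the exit events $[X_{\zeta-}=\ell]$, $[X_{\zeta-}=r]$ for the process started at \emph{interior} points, whereas under $P^h$ the process drifts to $r$ a.s., so $[X_{\zeta-}=\ell]$ is null and the theorem says nothing about accumulation of $A$ near $\ell$ under $P^h$; making sense of ``$A_{T_c}=\infty$ a.s.\ for the $P^h$-entrance law'' would require constructing that entrance law and proving a zero-one dichotomy for it, which is a substantive argument not available in the paper. Closure (b) runs into exactly the obstruction you yourself name: divergence of the means $E^{h,x}[A_{T_c}]$ does not force $E^{h,x}[\exp(-A_{T_c})]\rar 0$ (Jensen bounds the Laplace transform from \emph{below}), and the multiplicative structure alone cannot rule out that each factor in the infinite product stays close to $1$ while the truncated expectations remain summable.

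The paper closes this step analytically, and you already have every ingredient needed to do the same. Since $g_r$ solves (\ref{e:IEh_general}) (by Theorem \ref{t:main2}, with $x>c$ and $y<c$ so that $v_c(x,y)=s(x)-s(c)$ is a positive constant), the integrability implicit in the equation gives $\int_\ell^c g_r(z)\mu_A(dz)<\infty$; equivalently one can quote (\ref{g:uilder}). On the other hand, Theorem \ref{t:Afinite} applied under the \emph{original} measure — where the hypothesis $A_\zeta=\infty$ on $[\zeta=T_\ell]$ actually lives — yields $\int_\ell^c(s(z)-s(\ell))\mu_A(dz)=\infty$. Now your own monotonicity observation finishes the proof: $z\mapsto g_r(z)/(s(z)-s(\ell))$ is nondecreasing (by $s$-convexity and $g_r(\ell)=0$) with limit $\kappa_r$ at $\ell$, hence $g_r(z)\ge\kappa_r(s(z)-s(\ell))$ on $(\ell,c)$, and if $\kappa_r>0$ then
\[
\infty>\int_\ell^c g_r(z)\,\mu_A(dz)\;\ge\;\kappa_r\int_\ell^c (s(z)-s(\ell))\,\mu_A(dz)=\infty,
\]
a contradiction. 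This substitution — replacing your limit computation of $E^{h,x}[\exp(-A_{T_c})]$ by the integrability-versus-divergence clash — is precisely the paper's argument (stated there through (\ref{g:uilder}) and (\ref{e:gsder})) and closes the gap; the mirror argument handles $g_\ell$.
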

\begin{proof} Without loss of generality assume $a=1$.
	\begin{enumerate}
		\item  Let $y >x \vee c$. Suppose $c <x$. Then, 
		\[
		E^c[\chf_{[T_y<T_{\ell}]}\exp(-A_{T_y})]=E^c[\chf_{[T_x<T_{\ell}]}\exp(-A_{T_x})]E^x[\chf_{[T_y<T_{\ell}]}\exp(-A_{T_y})]
		\]
		by the strong Markov property. Via similar considerations when $x \leq c$, one thus arrives at
		\[
		g_r(x)=\frac{E^x[\chf_{[T_y<T_{\ell}]}\exp(-A_{T_y})]}{E^c[\chf_{[T_y<T_{\ell}]}\exp(-A_{T_y})]}.
		\]
		In particular, $g_r(X)\exp(-A)$ is a bounded martingale when stopped at $T_y$. Since $T_y$ increases to $\zeta$ as $y \rar r$, this shows $(g_r,A)$ is an \Ito-Watanabe pair bounded at $\ell$. Thus, it follows from Theorems \ref{t:main1}, \ref{t:main2} and \ref{t:uniqueIE} that $g_r$ is the unique solution of the stated equation and $\kappa_r=0$ when $s(\ell)=-\infty$. Moreover, $\frac{d^+g_r(\ell+)}{ds}=\frac{d^+g_r(\ell)}{ds}$ in view of (\ref{e:gsder}) and that $\mu_A$ does not charge $\{\ell\}$. 
		
		To prove the remaining claim  suppose $s(\ell)>-\infty$ but $A_{\zeta}=\infty$ a.s. on $[\zeta =T_{\ell}]$.  Then, (\ref{e:huil}) yields
		\be \label{e:expforder2}
		\begin{split}
			E^x[g(X_{T_b})]=&g(\ell)\frac{s(b)-s(x)}{s(b)-s(\ell)}+ \frac{s(x)-s(\ell)}{s(c)-s(\ell)}\\
			=& g(x) +\int_{\ell}^c  \frac{(s(x\wedge z)-s(\ell))(s(c)-s(x \vee z)}{s(c)-s(\ell)})g(z)\mu_A(dz).
		\end{split}
		\ee
		The above in particular implies $g(\ell)=0$ since 
		\be \label{e:Ainfatl}
		\int_{\ell}^x(s(z)-s(\ell))\mu_A(dz)=\infty
		\ee
		in view of Theorem \ref{t:Afinite} as $A_{\zeta}=\infty$ on $[\zeta=T_{\ell}]$.
		
		Moreover, (\ref{g:uilder}) leads to 
\[
\frac{d^+g_r(\ell)}{ds}=\frac{1}{s(c)-s(\ell)}-\int_{\ell}^c \frac{s(c)-s(z)}{s(c)-s(\ell)}g(z)\mu_A(dz).
\]	In particular,
\[
\infty>\int_{\ell}^c \frac{s(c)-s(z)}{s(c)-s(\ell)}g(z)\mu_A(dz)=\int_{\ell}^c \frac{(s(c)-s(z))(s(z)-s(\ell))}{s(c)-s(\ell)}\frac{g(z)}{s(z)-s(\ell)}\mu_A(dz).
\]
However, this implies  $\frac{d^+g(\ell)}{ds}=\lim_{z \rar \ell}\frac{g(z)}{s(z)-s(\ell)}=0$ in view of (\ref{e:Ainfatl}). 
		\item Repeat the above  starting with  $y<x\wedge c$ and observing
		\[
		g_{\ell}(x)=\frac{E^x[\chf_{[T_y<T_{r}]}\exp(-A_{T_y})]}{E^c[\chf_{[T_y<T_{r}]}\exp(-A_{T_y})]}.
		\]
	\end{enumerate}
\end{proof}
\begin{corollary} \label{c:Ainfinite}
	Suppose $P^x(A_{\zeta}=\infty)=1$ for all $x \in \elr$. Then, for any $a \in (0,\infty)$ and $c \in \elr$ the following hold:
	\begin{enumerate}
		\item The function
		\[
		g(x):=\left\{\ba{ll}
		aE^x[\exp(-A_{T_c})],& x\leq c,\\
		\frac{a}{E^c[\exp(-A_{T_x})]}, & x>c,
		\ea\right.
		\]
		is the unique solution of 
		\[
		g(x)=a+ \int_{\ell}^r (s(x \vee y)-s(c\vee y))g(y)\mu_A(dy).
		\]
		\item The function
		\[
		g(x):=\left\{\ba{ll}
		a E^x[\exp(-A_{T_c})],& c< x,\\
		\frac{a}{E^c[\exp(-A_{T_x})]}, & x\leq c,
		\ea
		\right.
		\]
		is the unique solution of 
		\[
		g(x)=a+ \int_{\ell}^r (s(c \wedge y)-s(x\wedge y))g(y)\mu_A(dy).
		\]
	\end{enumerate}
\end{corollary}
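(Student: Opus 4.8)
The plan is to read the statement off Theorem~\ref{t:semiinf_s} by specialising its hypotheses and then simplifying the two probabilistic expressions. First I would note that the blanket assumption $P^x(A_{\zeta}=\infty)=1$ for every $x\in\elr$ forces $A_{\zeta}=\infty$ a.s.\ in particular on $[\zeta=T_{\ell}]$ and on $[\zeta=T_r]$. Consequently the two ``Moreover'' clauses of Theorem~\ref{t:semiinf_s} apply, yielding $\kappa_r=\frac{d^+g_r(\ell+)}{ds}=0$ for the increasing solution and $\kappa_{\ell}=\frac{d^-g_{\ell}(r-)}{ds}=0$ for the decreasing one. Hence both integral equations of Theorem~\ref{t:semiinf_s} shed their $\kappa(s(x)-s(c))$ terms and become \emph{verbatim} the two displayed equations in the corollary; uniqueness is then inherited directly from Theorem~\ref{t:semiinf_s}, so no new uniqueness argument is required.

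The only substantive point left is to match the closed forms, i.e.\ to show that the indicators $\chf_{[T_c<T_{\ell}]}$ and $\chf_{[T_x<T_{\ell}]}$ appearing in $g_r$ may simply be deleted under the present hypothesis (and symmetrically for $g_{\ell}$ with $T_r$). Here I would argue directly from the killing convention rather than from path geometry. By Definition~\ref{d:caf} the CAF $A$ is finite on $[0,\zeta)$ and $A_{T_c}=A_{\zeta-}=A_{\zeta}$ when $T_c=\zeta$; since $A_{\zeta}=\infty$ a.s., we get $\exp(-A_{T_c})=0$ a.s.\ on $[T_c=\zeta]$ and $\exp(-A_{T_c})>0$ exactly on $[T_c<\zeta]$. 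Because $T_{\ell}\in\{\zeta,\infty\}$ always satisfies $T_{\ell}\ge\zeta$, one has the inclusion $[T_c<\zeta]\subseteq[T_c<T_{\ell}]$. Therefore, on the set where $\exp(-A_{T_c})>0$ the indicator equals $1$, while on its complement the factor $\exp(-A_{T_c})$ vanishes, giving
\[
E^x\big[\chf_{[T_c<T_{\ell}]}\exp(-A_{T_c})\big]=E^x\big[\exp(-A_{T_c})\big],\qquad x\le c.
\]
The identical reasoning applied to the hitting of $x$ from $c$ (for $x>c$) removes the indicator from the denominator, and the mirror-image argument with $T_r$ in place of $T_{\ell}$ disposes of part~(2). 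This produces precisely the two closed forms claimed.

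The step needing the most care is this last simplification, and even there the only delicate ingredient is being explicit about the convention $\exp(-A_{T_c})=\exp(-A_{\zeta})=0$ on $[T_c=\zeta]$, which is legitimate because $A_t=A_{\zeta-}$ for $t\ge\zeta$ and $A_{\zeta-}=A_{\zeta}=\infty$ a.s.\ under the hypothesis. Once the inclusion $[T_c<\zeta]\subseteq[T_c<T_{\ell}]$ is recorded, everything else is a direct transcription of Theorem~\ref{t:semiinf_s}, so I anticipate no genuine obstacle beyond this bookkeeping.
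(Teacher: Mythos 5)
Your proof is correct and takes essentially the same route as the paper: the paper's one-line proof invokes Theorem \ref{t:semiinf_s} together with the identity $E^x[\exp(-A_{T_c})]=E^x[\chf_{[T_c<T_{\ell}]}\exp(-A_{T_c})]=E^x[\chf_{[T_c<T_{r}]}\exp(-A_{T_c})]$, which is exactly the indicator-removal argument you spell out via $A_{T_c}=A_{\zeta-}=\infty$ on $[T_c=\zeta]$ and $[T_c<\zeta]\subseteq[T_c<T_{\ell}]$. Your explicit appeal to the ``Moreover'' clauses of Theorem \ref{t:semiinf_s} to kill the $\kappa$ terms is likewise what the paper leaves implicit, so there is no gap.
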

\begin{proof}
	This is a direct consequence of Theorem \ref{t:semiinf_s} since $E^x[\exp(-A_{T_c})]=E^x[\chf_{[T_c<T_{\ell}]}\exp(-A_{T_c})]=E^x[\chf_{[T_c<T_{r}]}\exp(-A_{T_c})]$.
\end{proof}
\begin{corollary}  \label{c:Anotinf}
	Suppose  $A$ is a  CAF, both $s(r)$ and $s(\ell)$ are finite, and $P^x(A_{\infty}<\infty)=1$. Let $g_r$ and $g_{\ell}$ be as in Therorem \ref{t:semiinf_s}. Then
		\[
		g_i(x):= a\frac{E^x[\chf_{[X_{\zeta}=i]}\exp(-A_{\zeta})]}{E^c[\chf_{[X_{\zeta}=i]}\exp(-A_{\zeta})]}, \quad i\in \{\ell,r\}.
		\]		
\end{corollary}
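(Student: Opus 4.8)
The plan is to pass to the limit in the ratio representation established within the proof of Theorem \ref{t:semiinf_s}. Recall that for $y>x\vee c$ the strong Markov property there yields
\[
g_r(x)=a\,\frac{E^x[\chf_{[T_y<T_{\ell}]}\exp(-A_{T_y})]}{E^c[\chf_{[T_y<T_{\ell}]}\exp(-A_{T_y})]},
\]
the factor $a$ coming from the normalisation $g_r(c)=a$ and linearity of (\ref{e:ieunique}), and symmetrically $g_{\ell}(x)=a\,E^x[\chf_{[T_y<T_r]}\exp(-A_{T_y})]/E^c[\chf_{[T_y<T_r]}\exp(-A_{T_y})]$ for $y<x\wedge c$. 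I would let $y\uparrow r$ in the first identity and $y\downarrow\ell$ in the second. Since both $s(\ell)$ and $s(r)$ are finite, $s(X_{\cdot\wedge\zeta})$ is a bounded martingale, hence converges, and the usual exit-probability computation shows $X_{\zeta}\in\{\ell,r\}$ almost surely with $P^c(X_{\zeta}=i)>0$ for $i\in\{\ell,r\}$; together with $A_{\infty}=A_{\zeta}<\infty$ a.s.\ this guarantees that the denominators $E^c[\chf_{[X_{\zeta}=i]}\exp(-A_{\zeta})]$ are strictly positive, so the asserted quotients are well defined.

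Next I would identify the limiting events. Since $T_y$ is nondecreasing in $y$ and hitting $y$ before $\ell$ becomes harder as $y$ grows, the sets $[T_y<T_{\ell}]$ decrease as $y\uparrow r$; I claim $\bigcap_{y<r}[T_y<T_{\ell}]=[X_{\zeta}=r]$ pathwise. The inclusion $\supseteq$ is clear because on $[X_{\zeta}=r]$ one has $T_{\ell}=\infty$ while every level $y<r$ is hit before $\zeta$. For $\subseteq$, suppose $T_y<T_{\ell}$ for all $y<r$ but $X_{\zeta}=\ell$; then $T_{\ell}=\zeta$, so $X_{T_y}=y\to r$ while $T_y\uparrow\tau\le\zeta$, and continuity of $X$ on $[0,\zeta)$ forces $X_{\tau}=r$ if $\tau<\zeta$ or $X_{\zeta-}=r$ if $\tau=\zeta$, either way contradicting $X_{\zeta}=\ell$. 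Hence the two events coincide, and symmetrically $\bigcap_{y>\ell}[T_y<T_r]=[X_{\zeta}=\ell]$.

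It then remains to pass the limit through the expectation. On $[X_{\zeta}=r]$ the same continuity argument gives $T_y\uparrow\zeta$, so by continuity of $A$ and the convention $A_{\zeta}=A_{\zeta-}$ one has $A_{T_y}\uparrow A_{\zeta}<\infty$ and thus $\chf_{[T_y<T_{\ell}]}\exp(-A_{T_y})\to\chf_{[X_{\zeta}=r]}\exp(-A_{\zeta})$; on $[X_{\zeta}=\ell]$ the indicator is eventually $0$ and the same limit holds. As the integrands are bounded by $1$, the bounded convergence theorem yields $E^x[\chf_{[T_y<T_{\ell}]}\exp(-A_{T_y})]\to E^x[\chf_{[X_{\zeta}=r]}\exp(-A_{\zeta})]$, and dividing numerator by denominator gives the stated formula for $g_r$, i.e.\ the case $i=r$; the case $i=\ell$ follows verbatim. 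I expect the main obstacle to be the pathwise identification of the limiting events together with the claim $T_y\uparrow\zeta$ on $[X_{\zeta}=r]$, that is, ruling out paths converging to one boundary while their running supremum reaches the other; this is precisely where finiteness of the scale at both ends, continuity of $X$, and $s(\ell)\ne s(r)$ enter.
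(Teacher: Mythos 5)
Your proposal is correct and follows essentially the same route as the paper's own proof: the paper likewise starts from the ratio representation $g_r(x)=a\,E^x[\chf_{[T_y<T_{\ell}]}\exp(-A_{T_y})]/E^c[\chf_{[T_y<T_{\ell}]}\exp(-A_{T_y})]$ obtained in the proof of Theorem \ref{t:semiinf_s} and lets $y\rar r$ (resp.\ $y\rar\ell$), identifying the limiting event with $[X_{\zeta}=r]$ (resp.\ $[X_{\zeta}=\ell]$). Your additional details --- the monotone identification of the events, $T_y\uparrow\zeta$ on $[X_{\zeta}=r]$, and bounded convergence --- are exactly the steps the paper compresses into one sentence, so there is nothing to correct.
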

\begin{proof} Without loss of generality assume $a=1$.  As observed in the proof of Theorem \ref{t:semiinf_s}, 
	\[
	g_r(x)=\frac{E^x[\chf_{[T_y<T_{\ell}]}\exp(-A_{T_y})]}{E^c[\chf_{[T_y<T_{\ell}]}\exp(-A_{T_y})]}.
	\]
	for any $y >x\vee c$. Letting $y \rar r$ and observing that $X_{\zeta}=r$ on $[T_{r}<T_{\ell}]$ establish the claim. $g_{\ell}$ is handled similarly.
\end{proof}
So far in this paper the focus has been on semi-uniformly integrable subharmonic functions. The next result -- akin to the representation of solutions of ODEs in terms of linearly independent solutions -- shows that this is enough to characterise all.
\begin{theorem} \label{t:representation}
	For any $g\in \cS^+$  there exists a CAF $A$ with Revuz measure $\mu_A$ such that $g =\lambda_1 g_r +\lambda_2 g_{\ell}$, where $g_r$ and $g_{\ell}$ are as in Theorem \ref{t:semiinf_s}. 
\end{theorem}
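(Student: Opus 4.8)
The plan is to regard the condition ``$\phi(X)\exp(-A)$ is a $P^x$–local martingale'' as a second order linear equation in $\phi$ and to exhibit $g_r,g_\ell$ as a fundamental system, mimicking the representation of ODE solutions alluded to just before the statement. Given $g\in\cS^+$, Theorem \ref{t:submvanish} furnishes the CAF $A$ for which $(g,A)$ is an \Ito-Watanabe pair, and I build $g_r,g_\ell$ from this same $A$ (with some fixed $a>0$ and $c\in\elr$) as in Theorem \ref{t:semiinf_s}. Let $\cL$ be the linear space of Borel $\phi$ for which $\phi(X)\exp(-A)$ is a $P^x$–local martingale for all $x\in\elr$; then $g,g_r,g_\ell\in\cL$. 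For any $\phi\in\cL$ an integration by parts shows $\phi(X)-\int_0^\cdot\phi(X_s)\,dA_s$ is a local martingale, and the Revuz–measure identification used in the converse part of Theorem \ref{t:main1} gives the local identity (\ref{e:EhabR}); differentiating it exactly as in the proof of Lemma \ref{l:mainR}(6) (which never uses the sign of $\phi$) shows that each $\phi\in\cL$ has one–sided $s$–derivatives and that the measure $d\!\left(\tfrac{d^+\phi}{ds}\right)$ equals $\phi\,\mu_A$.

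The crux is to show that a member of $\cL$ is determined by its value and $s$–derivative at a single point. Fix $c$ with $\mu_A(\{c\})=0$, which is possible because the Radon measure $\mu_A$ has at most countably many atoms; then every $\phi\in\cL$ is $s$–differentiable at $c$. Integrating the derivative identity twice and applying Fubini, $\phi$ satisfies on each $[c,b]\subset[c,r)$ the Volterra equation
\be
\phi(x)=\phi(c)+\frac{d\phi}{ds}(c)\left(s(x)-s(c)\right)+\int_c^x\left(s(x)-s(z)\right)\phi(z)\,\mu_A(dz),
\ee
and the mirror equation on intervals to the left of $c$. Since $\mu_A$ and $s$ are finite on every compact subinterval of $\elr$, a standard Picard/Gronwall estimate shows this equation has at most one solution with prescribed data $\left(\phi(c),\tfrac{d\phi}{ds}(c)\right)$. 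Hence the linear map $\Phi:\cL\to\bbR^2$, $\Phi(\phi)=\left(\phi(c),\tfrac{d\phi}{ds}(c)\right)$, is injective.

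It remains to see that $\Phi(g_r)$ and $\Phi(g_\ell)$ span $\bbR^2$, i.e. that the Wronskian $W:=g_r\,\tfrac{dg_\ell}{ds}-g_\ell\,\tfrac{dg_r}{ds}$ is nonzero. Using the derivative formulas of Lemma \ref{l:mainR}(6) one has $d\!\left(\tfrac{d^+g_r}{ds}\right)=g_r\,\mu_A$ and $d\!\left(\tfrac{d^+g_\ell}{ds}\right)=g_\ell\,\mu_A$, so a Stieltjes integration by parts — legitimate since $g_r,g_\ell$ are continuous — makes the increments of the two products cancel and shows $W$ is constant. As $g_r,g_\ell>0$, $\tfrac{dg_r}{ds}\ge0$ and $\tfrac{dg_\ell}{ds}\le0$, we get $W\le0$, with $W=0$ forcing both $g_r$ and $g_\ell$ constant; by Theorem \ref{t:semiinf_s} this occurs only when $\mu_A\equiv0$, $s(\ell)=-\infty$ and $s(r)=+\infty$, i.e. $X$ recurrent and $g$ a positive $s$–harmonic, hence constant, function, for which the representation is immediate. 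Away from this degenerate case $W\neq0$, so I may solve $\lambda_1\Phi(g_r)+\lambda_2\Phi(g_\ell)=\Phi(g)$; then $g$ and $\lambda_1g_r+\lambda_2g_\ell$ both lie in $\cL$ and share the same $\Phi$–image, so by injectivity they coincide. Taking $c$ to be an interior minimiser of the $s$–convex function $g$ (so $\tfrac{dg}{ds}(c)=0$) and recalling $W<0$, Cramer's rule yields $\lambda_1,\lambda_2\ge0$, identifying $g$ as a genuine sum of a nonnegative multiple of the increasing solution and of the decreasing one.

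The main obstacle is the injectivity of $\Phi$, i.e. the well–posedness of the initial value problem: one must carry out the Picard/Gronwall iteration for the Volterra equation whose kernel is driven by the merely Radon measure $\mu_A$, controlling the iterates on each compact $[a,b]\subset\elr$ on which $\mu_A$ and $s$ are bounded. The remaining delicate points are the isolated degenerate (recurrent $s$–harmonic) case in the Wronskian step and, if one insists on $\lambda_1,\lambda_2\ge0$, the selection of $c$: when $g$ is monotone or attains its minimum at an atom of $\mu_A$ the choice $\tfrac{dg}{ds}(c)=0$ is unavailable and a short separate argument (e.g. working with one–sided $s$–derivatives at the minimiser, or setting the corresponding coefficient to zero) is needed.
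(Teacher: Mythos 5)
Your argument is sound and reaches the theorem by a genuinely different route than the paper, even though the two proofs share the same skeleton: both extract $A$ from the Doob--Meyer compensator $B$ of $g(X)$ via $dA_t=dB_t/g(X_t)$ (treating $B\equiv 0$, i.e.\ $g$ harmonic, separately), build $g_r,g_\ell$ from this same $A$, and reduce everything to the claim that a function $\phi$ for which $\phi(X)\exp(-A)$ is a local martingale is determined by two scalar data. The paper settles that claim probabilistically: it chooses $\lambda_1,\lambda_2$ by matching $g$ at two points $a^0<b^0$ (the $2\times 2$ matrix is invertible since $g_r$ is increasing, $g_\ell$ decreasing, both positive), then kills the difference on $(a^0,b^0)$ by optional stopping at $T_{a^0b^0}$ and propagates outward by stopping at $T_{zb^0}$ and $T_{a^0z}$. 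You settle it analytically: matching value and $s$-derivative at one point, with uniqueness coming from the measure-driven Volterra equation and spanning from a constant, nonvanishing Wronskian. The two steps you label ``standard'' do in fact close: the Picard iteration converges because the ordered open simplex satisfies $\mu_A^{\otimes n}\left(\{c<z_n<\cdots<z_1<x\}\right)\le \mu_A\left((c,x)\right)^n/n!$ for an arbitrary finite measure, atoms included (the kernels $s(\cdot)-s(\cdot)$ vanish on the diagonal, so only strict orderings contribute); and the Wronskian computation survives atoms of $\mu_A$ because $dg_r$ and $dg_\ell$ are absolutely continuous with respect to the atomless measure $ds$, so no jump-correction terms arise in the Stieltjes integration by parts. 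What each approach buys: the paper's is shorter and dispenses with all regularity bookkeeping, while yours makes the ``fundamental system'' analogy exact, localizes the required data at a single point, and yields the Wronskian identity as a by-product. One correction: your final paragraph is unnecessary, since the theorem imposes no sign restriction on $\lambda_1,\lambda_2$ (nor does the paper's proof produce one); the delicate choice of $c$ as a minimiser of $g$, and the difficulties you flag around it, can simply be dropped.
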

\begin{proof}
	Since $g$ is subharmonic, it is a convex function of $s$ and there exists a CAF $B$ such that $g(X)-B$ is a $P^x$-local martingale for every $x\in \elr$. If $B\equiv 0$, $g$ must be an affine transformation of $s$, in which case $g$ is excessive and the claim holds with $\mu_A\equiv 0$. Note that if both $s(r)$ and $s(\ell)$ are infinite, that is $X$ is recurrent, only excessive functions are constants (see,.e.g., Exercise 10.39 in \cite{GTMP}). 
	
	Thus, suppose $B$ is not identically $0$. Since $g\in \cS^+$, $A_t =\int_0^t \frac{1}{g(X_s)}dB_s$ is well-defined as a CAF. As observed before, $g(X)\exp(-A)$ can be easily checked to be a $P^x$-local martingale. Let $g_r$ and $g_{\ell}$ be as defined in Theorem \ref{t:semiinf_s} with $a=1$.  Next consider an interval $(a^0,b^0)$ with $\ell <a^0<c<b^0<r$ and let $\lambda_1$ and $\lambda_2$ be such that
	\[
	\lambda_1 g_{r}(a^0)+ \lambda_2 g_{\ell}(a^0)= g(a^0) \mbox{ and } \lambda_1 g_r(b^0) + \lambda_2 g_{\ell}(b^0)=g(b^0)
	\]
	noting that the above has a unique solution since $g_{\ell}$ and $g_r$ are linearly independent. Since $\exp(-A) \left\{g(X)- \lambda_1 g_r(X)-\lambda_2 g_{\ell}(X)\right\}$ is a $P^x$-local martingale and $g$ as well as $g_i$s are continuous, one obtains
	\[
	g(x)-\lambda_1 g_r(x)-\lambda_2 g_{\ell}(x)=E^x\left[\exp(-A_{T_{ab}})\left\{g(X_{T_{ab}})- \lambda_1 g_r(X_{T_{ab}})-\lambda_2 g_{\ell}(X_{T_{ab}})\right\}\right]=0
	\]
	for any $x \in (a^0,b^0)$. Using the optional stopping theorem at $T_{zb^0}$ for $\ell<z <a^0$ and $x\in (a^0,b^0)$ shows    $g(z)=\lambda_1 g_r(z)+\lambda_2 g_{\ell}(z)$. Repeating the same argument at $T_{a^0z}$ for $z>b^0$ establishes $g(z)=\lambda_1 g_r(z)+\lambda_2 g_{\ell}(z)$ on $(b^0,r)$, hence the claim.
\end{proof}
One can turn the above result around to construct $g\in \cS^+$ starting with a Radon measure $\mu_A$ that can be associated to a CAF $A$ by solving first the equations for $g_r$ and $g_{\ell}$. However, the difficulty with this approach is that if $s(\ell)>-\infty$ and $A_{\zeta}<\infty$ on $[\zeta=T_{\ell}]$ (resp. $s(r)<\infty$ and $A_{\zeta}<\infty$ on $[\zeta=T_{r}]$), the right (resp. left) derivative of $g_r$ (resp. $g_{\ell}$) at $\ell$ (resp. $r$) is not known.  The next result offers a remedy to this problem.
\begin{theorem}
	Let $\mu_A$ be a Radon measure on $\elr$ and $A$ its corresponding CAF. Then the following statements are valid:
	\begin{enumerate}
		\item The increasing function $g_r$ of Theorem \ref{t:semiinf_s} is the unique solution of
		\[
		g(x)=a\frac{s(x)-s(\ell)}{s(c)-s(\ell)}-\int_{\ell}^c u(c;x,y)g(y)\mu_A(dy) +\int_{c+}^r (s(x\vee y)-s(y))g(y)\mu_A(dy),
		\]
		where
		\[
		u(c;x,y):=\lim_{a \rar \ell} \frac{(s(x\wedge y)-s(a))(s(c)-s(x \vee y))}{s(c)-s(a)}.
		\]
		\item The decreasing function $g_{\ell}$ of Theorem \ref{t:semiinf_s} is the unique solution of
		\[
		g(x)=a\frac{s(r)-s(x)}{s(r)-s(c)}-\int_{c+}^r u(x,y;c)g(y)\mu_A(dy) +\int_{\ell}^c (s(y)-s(x \wedge y))g(y)\mu_A(dy),
		\]
		where
			\[
		u(x,y;c):=\lim_{b \rar r}\frac{(s(x\wedge y)-s(c))(s(b)-s(x \vee y))}{s(b)-s(c)}.
		\]
	\end{enumerate}
\begin{proof}
	Only the first statement will be proven as the other can be shown by similar arguments. First note that if $s(\ell)=-\infty$ the stated equation coincides with the one in Theorem \ref{t:semiinf_s}.
	
	 Suppose $s(\ell)>-\infty$, which in turn yields $g_r(\ell)=0$. Indeed, for $x <c$, $g_r(x)\leq a P^x(T_c<T_{\ell})=a \frac{s(x)-s(\ell)}{s(c)-s(\ell)}$. Moreover, in view of (\ref{g:uilder}) and (\ref{e:gsder}), one has
	\[
	\kappa_r=\frac{a}{s(c)-s(\ell)}-\int_{\ell}^c \frac{s(c)-s(y)}{s(c)-s(\ell)}g_r(y)\mu_A(dy),
	\]
	which should equal $0$ in case  $\int_{\ell}^b (s(y)-s(\ell))\mu_A(dy)=\infty$, for some $b \in \elr$.
	
	A tedious but straightforward algebra  yields
	\[
	-(s(x)-s(c))\frac{s(c)-s(y)}{s(c)-s(\ell)} +s(x\vee y)-s(c\vee y)= -u(c;x,y)
	\]
	for $y\leq c$, hence the claim follows plugging above into the equation from Theorem \ref{t:semiinf_s}. 
	
\end{proof}	
\end{theorem}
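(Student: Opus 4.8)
The plan is to recognise the displayed equation as nothing more than the integral equation of Theorem \ref{t:semiinf_s} with the a priori unknown slope $\kappa_r=\frac{d^+g_r(\ell+)}{ds}$ eliminated in favour of the potential kernel $u(c;x,y)$. Since Theorem \ref{t:semiinf_s} already identifies $g_r$ as the unique solution of $g(x)=a+\kappa_r(s(x)-s(c))+\int_\ell^r(s(x\vee y)-s(c\vee y))g(y)\mu_A(dy)$ with $\kappa_r=\frac{d^+g_r(\ell+)}{ds}$, the whole task reduces to rewriting the $\kappa_r$-term as an explicit functional of $g_r$. Part (2) will then follow by the symmetric reflection interchanging the roles of $\ell$ and $r$, so I would only treat (1).

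First I would dispose of the case $s(\ell)=-\infty$. Here $u(c;x,y)=s(c)-s(x\vee y)$ for $y\le c$ and $a\frac{s(x)-s(\ell)}{s(c)-s(\ell)}\to a$, so the displayed equation collapses termwise onto the equation of Theorem \ref{t:semiinf_s} with $\kappa_r=0$, and there is nothing further to prove. The substance is the case $s(\ell)>-\infty$. I would first record that $g_r(\ell)=0$, which is immediate from the bound $g_r(x)\le aP^x(T_c<T_\ell)=a\frac{s(x)-s(\ell)}{s(c)-s(\ell)}\to 0$. Using that $(g_r,A)$ is uniformly integrable near $\ell$, I would then apply the boundary-derivative formula (\ref{g:uilder}) with $b=c$, $g_r(\ell)=0$ and $g_r(c)=a$ to obtain
\[
\kappa_r=\frac{a}{s(c)-s(\ell)}-\int_\ell^c\frac{s(c)-s(y)}{s(c)-s(\ell)}g_r(y)\mu_A(dy),
\]
and substitute this into the equation of Theorem \ref{t:semiinf_s}.

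The bookkeeping then splits the $\kappa_r(s(x)-s(c))$ term into two pieces. The constant $a$ together with the explicit part $\frac{a(s(x)-s(c))}{s(c)-s(\ell)}$ combine into the boundary profile $a\frac{s(x)-s(\ell)}{s(c)-s(\ell)}$, while the integral part of $\kappa_r(s(x)-s(c))$ merges with the $y\le c$ portion of $\int_\ell^r(s(x\vee y)-s(c\vee y))g_r\,d\mu_A$. The algebraic identity that drives everything is
\[
-(s(x)-s(c))\frac{s(c)-s(y)}{s(c)-s(\ell)}+s(x\vee y)-s(c\vee y)=-u(c;x,y),\qquad y\le c,
\]
which I would verify by clearing the denominator $s(c)-s(\ell)$ and splitting into the two cases $x\le y$ and $x>y$; the $y>c$ contribution is untouched and yields $\int_{c+}^r(s(x\vee y)-s(y))g_r\,d\mu_A$ directly, since $c\vee y=y$ there.

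Two points deserve care and constitute the only real obstacles. The first is the degenerate subcase in which $\kappa_r=0$, corresponding via Theorem \ref{t:Afinite} to $\int_\ell^b(s(y)-s(\ell))\mu_A(dy)=\infty$, i.e. $A_\zeta=\infty$ on $[\zeta=T_\ell]$: here one must confirm that the weighted integral $\int_\ell^c\frac{s(c)-s(y)}{s(c)-s(\ell)}g_r\,d\mu_A$ is nonetheless finite and equals $\frac{a}{s(c)-s(\ell)}$, which is exactly the finiteness established inside the proof of Theorem \ref{t:semiinf_s}, so that the same identity forces consistency with $\kappa_r=0$. Indeed the whole virtue of the kernel $u(c;x,y)$, whose factor $s(x\wedge y)-s(\ell)$ vanishes at $\ell$, is that it keeps the $(\ell,c)$-integral convergent where the raw term $\int_\ell g_r\,d\mu_A$ need not be, which is precisely why the rewritten equation is preferable. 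The second obstacle is uniqueness: any solution of the displayed equation satisfies $g(c)=a$ (set $x=c$ and use $u(c;c,y)=0$), and on $(\ell,c)$ it solves a potential equation with Green kernel $u(c;\cdot,\cdot)$ against the $s$-linear boundary profile. Reversing the substitution above shows such a solution solves (\ref{e:ieunique}) with $\kappa=\frac{d^+g(\ell+)}{ds}$, whence uniqueness follows from Theorem \ref{t:uniqueIE}.
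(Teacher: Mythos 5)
Your existence half --- the identification of $g_r$ as a solution of the displayed equation --- is essentially the paper's own proof, step for step: the same reduction of the case $s(\ell)=-\infty$ to Theorem \ref{t:semiinf_s}, the same observation $g_r(\ell)=0$ from $g_r(x)\le aP^x(T_c<T_{\ell})$, the same expression for $\kappa_r$ obtained from (\ref{g:uilder}) and (\ref{e:gsder}) with $b=c$, and the same algebraic identity $-(s(x)-s(c))\frac{s(c)-s(y)}{s(c)-s(\ell)}+s(x\vee y)-s(c\vee y)=-u(c;x,y)$ for $y\le c$. Your remark about the degenerate subcase $\kappa_r=0$ matches the paper's parenthetical comment on $\int_{\ell}^b(s(y)-s(\ell))\mu_A(dy)=\infty$, via Theorem \ref{t:Afinite} and the finiteness established inside the proof of Theorem \ref{t:semiinf_s}. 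All of this is correct.

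The gap is in your final uniqueness step. Theorem \ref{t:uniqueIE} asserts uniqueness of solutions of (\ref{e:ieunique}) for a \emph{fixed} $\kappa$: its proof begins by using (\ref{e:gsder}) to conclude that two solutions of the same equation have the same derivative at $\ell$, precisely because they share the same $\kappa$. Reversing your substitution turns a solution $g$ of the displayed equation into a solution of (\ref{e:ieunique}) whose $\kappa$ is a $g$-dependent quantity, namely $\frac{d^+g(\ell+)}{ds}=\frac{a}{s(c)-s(\ell)}-\int_{\ell}^c\frac{s(c)-s(y)}{s(c)-s(\ell)}g(y)\mu_A(dy)$; two hypothetical solutions would a priori reverse to two \emph{different} values of $\kappa$, and Theorem \ref{t:uniqueIE} cannot compare solutions of two different equations. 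Worse, the reversal itself presupposes $\int_{\ell}^c\frac{s(c)-s(y)}{s(c)-s(\ell)}|g(y)|\mu_A(dy)<\infty$, which the integrability implicit in the displayed equation does not deliver, since $u(c;x,\cdot)$ vanishes at $\ell$ --- this is exactly the ``virtue'' of the kernel that you yourself single out, and it cuts against your own reversal. To close the argument one must show that the homogeneous version of the displayed equation has only the trivial solution: for instance, on $(\ell,c]$ any connected component of positivity of a solution $h$ would be an interval on which $h$ is $s$-convex (by the localisation argument of Lemma \ref{l:mainR}), strictly positive inside and vanishing at both endpoints, which is impossible, so $h\equiv 0$ there; on $(c,r)$ one is then left with a homogeneous Volterra-type equation $h(x)=\int_{c+}^x(s(x)-s(y))h(y)\mu_A(dy)$, killed by iteration. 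In fairness, the paper's own proof is equally silent on uniqueness (``hence the claim follows''), so you have reproduced its full substance; but the inference ``whence uniqueness follows from Theorem \ref{t:uniqueIE}'' does not hold as written.
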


\section{Path transformations via \Ito-Watanabe pairs} \label{s:transform}
This section is devoted to measure changes via \Ito-Watanabe pairs associated with semi-uniformly integrable subharmonic functions. Note that the pairs $(g,A)$ constructed in Corollary \ref{c:Anotinf} lead to bounded martingales, i.e. $g(X)\exp(-A)$ is bounded. Thus, the changes of measures via these martingales results in diffusion process whose laws are equivalent to that of the original diffusion. On the other hand, the local martingale associated to the \Ito-Watanabe pair of Corollary \ref{c:Ainfinite} is not necessarily a uniformly integrable martingale. 
\begin{proposition} \label{p:conv20}
	Suppose that $X$ is recurrent,  $f\geq 0$ and $A$ is a CAF such that $f(X)\exp(-A)$ is a supermartingale.  Assume further that $f$ is continuous on $\elr$ and either $f(\ell+)$ or $f(r-)$ exist (with the possibility of being infinite). Then, $f(X_t)\exp(-A_t)\rar 0$, $P^x$-a.s. for all $x \in \elr$.
\end{proposition}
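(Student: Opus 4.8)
The plan is to combine the almost-sure convergence of non-negative supermartingales with the recurrence of $X$. Set $M_t := f(X_t)\exp(-A_t)$. Since $M$ is non-negative and, by hypothesis, a $P^x$-supermartingale, Doob's convergence theorem produces a finite limit $M_\infty \geq 0$ with $M_t \to M_\infty$ $P^x$-a.s.; the entire task is thus to show $M_\infty = 0$ a.s. Observe first that recurrence forces $\zeta = \infty$ a.s. (the process never reaches a boundary in finite time), so that $A_\zeta = A_\infty$ and $M_t$ is genuinely defined for all $t$.

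Next I would fix an interior point $z \in \elr$ and exploit recurrence: $T_z < \infty$ a.s., and by the strong Markov property $X$ returns to $z$ along an increasing sequence of times $\sigma_n \uparrow \infty$. Since $A$ is continuous and nondecreasing, $A_{\sigma_n} \uparrow A_\infty$, and along this subsequence $M_{\sigma_n} = f(z)\exp(-A_{\sigma_n}) \to f(z)\exp(-A_\infty)$, where $f(z)\in(0,\infty)$ is finite by continuity. Comparing with $M_{\sigma_n} \to M_\infty$ identifies $M_\infty = f(z)\exp(-A_\infty)$. Everything therefore reduces to proving $A_\infty = \infty$ a.s., since then $M_\infty = f(z)\cdot 0 = 0$. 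This is precisely the content of Theorem \ref{t:Afinite}\,(1): because $X$ is recurrent and $\mu_A$ charges $\bfE$, one has $A_\zeta = A_\infty = \infty$ a.s.

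I expect the main obstacle to be exactly this step $A_\infty = \infty$, i.e. excluding the event $\{A_\infty < \infty\}$, on which $\exp(-A_t)$ stays bounded below and the supermartingale could in principle converge to a strictly positive limit. Recurrence is indispensable here, for on a transient path a CAF may well have finite total mass; Theorem \ref{t:Afinite}\,(1) is the tool that rules this out once $\mu_A(\bfE)>0$. The continuity of $f$ together with the one-sided boundary limit (say $f(r-)$ exists in $[0,\infty]$) plays an auxiliary role: it keeps $f$ locally bounded and tame enough to control $f(X_t)$ along sequences approaching the boundary, and it serves to dispose of the degenerate alternative $\mu_A\equiv 0$, in which $A\equiv 0$ and $f(X)$ is itself a non-negative supermartingale, so that $f$ is excessive and hence constant for the recurrent diffusion. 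In the substantive case $\mu_A(\bfE)>0$ that is of interest in this section, the conclusion $M_\infty = 0$ follows directly from the return-to-$z$ argument above.
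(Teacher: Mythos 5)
Your proof is correct (modulo one caveat it shares with the paper, noted below) but it excludes a positive limit by a genuinely different mechanism. The paper argues by contradiction: given supermartingale convergence and $A_\infty=\infty$ a.s., a limit that is positive with positive probability would force $f(X_t)\to\infty$ on that event; since $f$ is continuous, hence bounded on compact subsets of $\elr$, this would force $X_t$ to converge to $\ell$ or $r$, contradicting recurrence. You instead use recurrence \emph{constructively}: along return times $\sigma_n\uparrow\infty$ to a fixed interior point $z$, the supermartingale equals $f(z)\exp(-A_{\sigma_n})$, so its limit is identified as $f(z)\exp(-A_\infty)$, which vanishes once $A_\infty=\infty$. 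Your route buys economy of hypotheses: it needs only $f(z)<\infty$ at a single point, so the continuity of $f$ and the existence of a boundary limit become superfluous; the paper's route, on the other hand, does not rely on the diffusion hitting points, so it is the argument that would survive in settings (e.g.\ higher-dimensional or non-regular processes) where point recurrence fails. The shared caveat: both proofs rest on $A_\infty=\infty$ a.s., which by Theorem \ref{t:Afinite}(1) requires $\mu_A(\bfE)>0$ --- a hypothesis absent from the statement and silently assumed in the paper's proof (``since $A_{\infty}=\infty$, a.s.''). You flag this dependence explicitly, which is more careful than the paper; however, your remark that continuity ``disposes of'' the degenerate case $\mu_A\equiv 0$ cannot be made to work, because the proposition is simply false there: $f\equiv 1$ and $A\equiv 0$ give a supermartingale identically equal to $1$. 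Finally, your parenthetical $f(z)\in(0,\infty)$ overstates what is given ($f(z)$ may vanish), but that case only helps, yielding $M_\infty=0$ immediately.
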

\begin{proof} Since $f(X)\exp(-A)$ is a non-negative supermartingale, it converges a.s.. If this limit is non-zero with non-zero $P^x$-probability, then $P^x(\lim_{t \rar \infty}f(X_t)=\infty)>0$ since $A_{\infty}=\infty$, a.s.. However, this implies $\lim_{t\rar \infty}X_t$ exists and equals $\ell$ or $r$ with positive probability, which contradicts recurrence.
	\end{proof} 
Nevertheless,  one can still construct a Markov process, i.e. a {\em subprocess}, whose law is locally absolutely continuous with respect to that of the original process since $g(X)\exp(-A)$ is a {\em supermartingale multiplicative functional} (see Section 62 of \cite{GTMP}). 
\begin{theorem} \label{t:cofm}
	Consider an \Ito-Watanabe pair $(g,A)$, where $g$ is semi-uniformly integrable.   Then there exists a unique family of measures  $(Q^x)_{x \in \elr}$ on $(\Omega, \cF^u)$ rendering $X$ Markov with semigroup $(Q_t)_{t \geq 0}$ and $Q^x(X_0=x)=1$. Moreover, the following hold:
	\begin{enumerate}
		\item For every stopping time $T$ and $F\in \cF^*_T$
		\be \label{e:RN}
		Q^x(F,T<\zeta) =\frac{E^x[\chf_F \chf_{[T<\zeta]}g(X_T)\exp(-A_T)]}{g(x)} .
		\ee
		\item The semigroup $(Q_t)_{t \geq 0}$ coincides with that of a one-dimensional regular diffusion with no killing on $\elr$, scale function $s_g$ and speed measure $m_g$,
		where
		\[
		s_g(dx)=\frac{1}{g^2(x)}ds(x), \qquad m_g(dx)=g^2(x)m(dx).
		\]
		\item If $B$ is a CAF of $X$ with Revuz measure $\mu$ under $(P^x)_{x\in \elr}$ and speed measure $m$, its Revuz measure under $(Q^x)_{x\in \elr}$ and speed measure $m_g$ is given by $\mu_g(dx) =g^2(x)\mu(dx)$. 
	\end{enumerate} 
	
\end{theorem}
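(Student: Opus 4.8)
The plan is to realise $(Q^x)_{x\in\elr}$ as the law of the subprocess generated by the supermartingale multiplicative functional
\[
M_t:=\frac{g(X_t)}{g(X_0)}\exp(-A_t),
\]
and then to read off the scale, the speed and the transformation of Revuz measures from the defining relation (\ref{e:RN}). First I would check that $M$ is an exact multiplicative functional: since $A$ is a CAF one has $A_{t+s}=A_t+A_s\circ\theta_t$, while $g(X_{t+s})/g(X_0)=(g(X_{t+s})/g(X_t))(g(X_t)/g(X_0))$, so $M_{t+s}=M_t\,(M_s\circ\theta_t)$ a.s.. As $g(X)\exp(-A)$ is a non-negative $P^x$-local martingale it is a supermartingale with $M_0=1$, so $M$ is a supermartingale multiplicative functional. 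The existence of a unique Markov family $(Q^x)$ on $(\Om,\cF^u)$ with $Q^x(X_0=x)=1$, transition semigroup $Q_tf(x)=g(x)^{-1}E^x[f(X_t)g(X_t)\exp(-A_t)]$, and satisfying (\ref{e:RN}) for every stopping time $T$ and $F\in\cF^*_T$, is then the content of the general transform theory for multiplicative functionals (Section 62 of \cite{GTMP}); the Markov property is inherited from the multiplicativity of $M$, and uniqueness follows because the finite-dimensional distributions are fixed by the densities $M_t$. Two consequences of (\ref{e:RN}) will drive everything else. For $\ell<a<x<b<r$ the stopped functional $M^{T_{ab}}$ is a bounded, hence uniformly integrable, $P^x$-martingale, since $g$ is bounded on $[a,b]$ and $E^x[A_{T_{ab}}]<\infty$ by (\ref{e:potentialA}) and the Radon property of $\mu_A$. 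Therefore
\[
Q^x(T_{ab}<\zeta)=\frac{1}{g(x)}E^x\!\left[g(X_{T_{ab}})\exp(-A_{T_{ab}})\right]=1,
\]
so under $Q$ the process a.s. leaves every compact subinterval: there is no killing in the interior of $\elr$; and on $\cF_{T_{ab}}$ the laws $Q^x$ and $P^x$ are mutually absolutely continuous with the strictly positive bounded density $M_{T_{ab}}$, so $P$-regularity transfers to $Q$.

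For the scale function I would use the principle, immediate from (\ref{e:RN}), that $f(X)$ is a $Q^x$-local martingale up to $T_{ab}$ exactly when $f(X)g(X)\exp(-A)$ is a $P^x$-local martingale up to $T_{ab}$. Applying this to $f=s_g$ reduces the claim to showing that $\psi:=s_g\,g$ enters an \Ito-Watanabe pair with $A$, i.e. that $\psi(X)\exp(-A)$ is a $P^x$-local martingale. Writing $'=d/ds$ and recalling that by Theorem \ref{t:representation} and the derivative formula (\ref{e:gsder}) one has $dg'=g\,\mu_A$ as measures, I compute $\psi'=s_g'g+s_g g'=g^{-1}+s_g g'$ and then
\[
d\psi'=d(g^{-1})+d(s_g g')=-g^{-2}g'\,ds+\big(g^{-2}g'\,ds+s_g\,g\,\mu_A\big)=\psi\,\mu_A .
\]
Thus $\psi$ solves an integral equation of the form (\ref{e:genelIE}), and the potential-matching argument used in the proof of Theorem \ref{t:main1} (identifying the Doob--Meyer CAF of $\psi(X)$ with $\psi\cdot A$, then removing $\exp(-A)$ by parts) shows $\psi(X)\exp(-A)$ is a $P^x$-local martingale. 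Hence $s_g(X)$ is a $Q$-local martingale on compacts and $s_g$ is a scale function of $(Q^x)$.

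For the speed measure I would compute, for bounded $f$, via (\ref{e:RN}) at $T_{ab}$ and the optional projection of $M_{T_{ab}}$ onto $M_s$,
\[
E^x_Q\!\left[\int_0^{T_{ab}}f(X_s)\,ds\right]=\frac{1}{g(x)}E^x\!\left[\int_0^{T_{ab}}f(X_s)g(X_s)\exp(-A_s)\,ds\right]=\frac{1}{g(x)}\int_a^b G^A_{ab}(x,y)f(y)g(y)\,m(dy),
\]
where $G^A_{ab}$ is the Green function of $X$ killed by $A$ and on exiting $(a,b)$. The crucial simplification is that $g$ and $\psi=s_g g$ are two independent solutions of the discounted harmonic equation with constant Wronskian $g\psi'-\psi g'\equiv1$; forming the combinations that vanish at $a$ and at $b$ gives $G^A_{ab}(x,y)=g(x)g(y)\,\frac{(s_g(x\wedge y)-s_g(a))(s_g(b)-s_g(x\vee y))}{s_g(b)-s_g(a)}$. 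Substituting and cancelling $g(x)$ identifies the speed measure paired with $s_g$ as $m_g=g^2m$ (and again exhibits no killing term). Part (3) follows by the same device with $dB_s$ in place of $ds$: using $m_g=g^2m$, (\ref{e:RN}) and the martingale property of $M$,
\[
E^{m_g}_Q\!\left[\int_0^t f(X_s)\,dB_s\right]=\int g(x)\,m(dx)\,E^x\!\left[\int_0^t f(X_s)g(X_s)\exp(-A_s)\,dB_s\right];
\]
dividing by $t$ and letting $t\to0$, the factor $\exp(-A_s)\to1$ does not affect the limit and the Revuz correspondence applied with initial measure $g\,m$ yields $\int f\,d\mu_g=\int fg^2\,d\mu$, i.e. $\mu_g=g^2\mu$; taking $B_t=t$, $\mu=m$ recovers $m_g=g^2m$ consistently.

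The main obstacle I anticipate is the rigorous justification of the scale step when $\mu_A$ has atoms: there $g'$ jumps, so the identities $d(g^{-1})=-g^{-2}g'\,ds$ and $d(s_g g')=g^{-2}g'\,ds+s_g g\,\mu_A$ must be read as measure identities rather than pointwise derivatives, which is exactly the \Ito--Tanaka bookkeeping underlying Theorem \ref{t:ibp}. A secondary point is justifying the $t\to0$ Revuz limit in the presence of the discount $\exp(-A_s)$ and the non-stationary initial measure $g\,m$, for which dominating $|\exp(-A_s)-1|$ by $A_s$ together with the standard short-time Revuz estimate suffices.
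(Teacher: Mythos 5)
Your proposal is correct and, for the existence and uniqueness of $(Q^x)$, the absence of killing, and the identification of the scale function, it is essentially the paper's own argument: both reduce the scale claim to showing that $(s_g g, A)$ is an \Ito-Watanabe pair via the measure identity $d\,\frac{d^+(s_g g)}{ds}=(s_g g)\,\mu_A$ (your differential computation is the paper's integration by parts; like the paper, you should normalise $s_g(a)=0$ and work on $(a,r)$ so that $s_g g\geq 0$ before invoking the Doob--Meyer/potential-matching step, since $s_g g$ changes sign with the usual normalisation $s_g(c)=0$). The genuine divergence is in parts (2) and (3). For the speed measure, the paper first checks that $m_g$ is a symmetry measure for $(Q_t)$ -- a short computation from the $m$-symmetry of $(P_t)$ and the multiplicative functional property -- which fixes the scale--speed pair only up to a reciprocal constant $k$, and then pins $k=1$ using Theorem \ref{t:ibp} and a computation in the style of Theorem VII.3.12 of \cite{RY}. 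You instead compute the Green function of the $e^{-A}$-killed process from the unit-Wronskian pair $(g, s_g g)$, which couples scale and speed directly, leaves no free constant to pin down, and yields the $Q$-potential density as a by-product; the cost is that the Sturm--Liouville representation $G^A_{ab}(x,y)=\phi_a(x\wedge y)\,\phi_b(x\vee y)/(s_g(b)-s_g(a))$ must be justified in this paper's generality (atomic $\mu_A$, no absolute continuity of $s$ or $m$) -- it does hold, either by citing the Green-function theory for diffusions with killing measure in \cite{BorSal}, or by verifying through optional stopping that $\phi_a(X)e^{-A}$ and $\phi_b(X)e^{-A}$ are local martingales (your Wronskian computation together with Theorem \ref{t:main2} supplies exactly this), but that step should be made explicit rather than asserted. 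For part (3), the paper avoids short-time limits entirely (absolutely continuous functionals first, then local times via the occupation formula and Proposition 68.1 of \cite{GTMP}), whereas your direct Revuz-limit computation needs the refined correspondence $\lim_{t\to 0}t^{-1}E^{hm}\bigl[\int_0^t F(X_s)\,dB_s\bigr]=\int F h\,d\mu_B$ for the non-stationary initial law $g\,m$, plus an $o(t)$ domination of $|e^{-A_s}-1|$; both are true for $m$-symmetric diffusions and you flag them, but they are left unproved, so the paper's local-time route is the more self-contained of the two.
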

\begin{proof}
	The first statement follows directly from Theorem 62.19 in \cite{GTMP}.
	
	To prove the second statement observe that the killing measure on $\elr$ under $Q^x$ is null since there is no killing under $P^x$ and $g(X)\exp(-A)$ is a $P^x$-martingale when stopped at $T_{ab}$ for any $\ell<a<b<r$. 
	
	Moreover, $m_g$ is a symmetry measure for $(Q_t)$. Indeed, if $f$ and $h$ are bounded and measurable functions vanishing at $\Delta$, then
	\bean
	\int_{\ell}^r Q^x[f(X_t)]h(x)g^2(x)m(dx)&=&\int_{\ell}^r E^x[f(X_t)g(X_t)\exp(-A_t)]h(x)g(x)m(dx)\\
	&=&\int_{\ell}^r E^x[h(X_t)g(X_t)\exp(-A_t)]f(x)g(x)m(dx)\\
	&=&\int_{\ell}^r Q^x[h(X_t)]f(x)g^2(x)m(dx),
	\eean
	where the second equality follows from the fact that $m$ is the symmetry measure for $(P_t)$ and $\exp(-A)$ is a multiplicative functional in view of Theorem 13.25 in \cite{ChungWalsh}. Thus, $m_g$ is a speed measure associated to $(Q_t)_{t \geq 0}$. 
	
	Next let us observe that $s_g(X)$ is a $Q^x$-local martingale, where $s_g(x) =\int_c^x s_g(dx)$ for an arbitrary $c \in \elr$. However, this is equivalent to $s_g(X)g(X)\exp(-A)$ is a $P^x$-local martingale. That is, $(s_g g, A)$ has to be an \Ito-Watanabe pair. By killing $X$ at $T_a$ if necessary, this will follow from Theorem \ref{t:main2} if $s_g g$ solves (\ref{e:IEh_general}) on $(a,r)$ for any $a>\ell$ once $\ell$ is replaced by $a$.
	
	Indeed, redefining $s_g$ so that $s_g(a)=0$ one has via 
	\[
	\frac{d^+ s_g g}{ds}=\frac{1}{g} + s_g\frac{d^+g}{ds}
	\]
	and integration by parts that 
		\bean
		\frac{d^+ s_g g}{ds}(x)&=&\frac{1}{g(x)}+ s_g(x) \left(\frac{d^+g(a)}{ds}+ \int_a^xg(y)\mu_A(dy)\right)\\
		&=&\frac{1}{g(x)}+ \int_a^xs_g(y)g(y)\mu_A(dy)+ \int_a^x\frac{d^+g(y)}{ds}g^{-2}(y)\mu_A(dy)\\
		&=&\frac{1}{g(a)}+ \int_a^x s_g(y)g(y)\mu_A(dy),
	\eean
	where the first equality follows from (\ref{e:gsder}). Therefore,
	\bean
	s_g(x)g(x)&=&\frac{1}{g(a)}(s(x)-s(a))+ \int_a^x \int_a^z s_g(y)g(y)\mu_A(dy)ds(z)\\
	&=&\frac{1}{g(a)}(s(x)-s(a))+ \int_a^x (s(x)-s(y))s_g(y)g(y)\mu_A(dy),
	\eean
	which establishes that $(s_g g, A)$ is an \Ito-Watanabe pair in view of Theorem \ref{t:main2}.
	
Therefore,  once the speed measure $m_g$ is fixed, the associated scale function, $s^*$, will satisfy $s^*(dx)=k s_g(dx)$ for some $k>0$. Thus, the proof will be complete once it is shown that $k=1$. To this end, note that the potential density  of $X$ killed at $T_{ab}$ under the dynamics defined by $(Q_t)$ is given by $ku^*_{ab}$, where
\[
u^*_{ab}(x,y)= \frac{(s_g(x \wedge y)-s_g(a))(s_g(b)-s_g(x\vee y))}{s_g(b)-s_g(a)}. 
\]
To determine $k$, the quantity $Q^x(s(X_{T_{ab}}))-s(x)$ will be computed in two ways. First,
\be \label{Qxs1}
Q^x(s(X_{T_{ab}}))-s(x)= s(a)\frac{s_g(b)-s_g(x)}{s_g(b)-s_g(a)}+s(b)\frac{s_g(x)-s_g(a)}{s_g(b)-s_g(a)}-s(x).
\ee
On the other hand,
\bean
g(x) Q^x(s(X_{T_{ab}}))&=& E^x \left[s(X_{T_{ab}})g(X_{T_{ab}})\exp(-A_{T_{ab}})\right] \\
&=& g(x)s(x) + E^x\left[\int_0^{T_{ab}}\exp(-A_t)g'(X_t)dB_t\right]\\
&=&g(x)s(x) + g(x)Q^x\left[\int_0^{T_{ab}}\frac{g'(X_t)}{g(X_t)}dB_t\right],
\eean
where $B$ is as in Theorem \ref{t:ibp} and $g'$ stands for the left derivative of $g$ with respect to $s$. Since the Revuz measure of $B$ under $Q^x$ becomes $2g^2(x)s(dx)$ as will be shown below, one obtains
\be \label{Qxs2}
Q^x(s(X_{T_{ab}}))-s(x)=2k\int_a^b u_{ab}^*(x,y) g'(y) g(y)s(dy).
\ee
Now, combining (\ref{Qxs1}) and (\ref{Qxs2}) and repeating the similar calculations used in the proof of Theorem VII.3.12 in \cite{RY} yield
\[
\frac{ds}{ds_g}(x)-\frac{ds}{ds_g}(y)=2k\int_x^y g'(y)g(y)s(dy).
\]
However, the left hand side of the above is $g^2(x)-g^2(y)$ while the right hand side equals $k(g^2(x)-g^2(y))$. Thus, $k$ must equal $1$. 
	
Thus, it remains to prove the last statement. First, suppose $B_t:=\int_0^t f(X_s)ds$ for a non-negative measurable $f$. Then,
\[
Q^x(B_{T_{ab}})=\int_a^b u^*_{ab}(x,y)f(y) m_g(dy)=\int_a^b u^*_{ab}(x,y)f(y) g^2(y)m(dy),
\]
for any $\ell <a<b<r$, which implies the Revuz measure under  $(Q^x)$ given the speed measure $m_g$ equals $f(y)g^2(y)m(dy)$. Since the corresponding measure under $(P^x)$ is given by $f(y)m(dy)$, the claim follows for all such $B$. 

Moreover, by the occupation times formula $B_{T_{ab}}=\int_{\ell}^r L^y_{T_{ab}} f(y) m(dy)$, where $L^y$ is the local time of $X$ at level $y$ under $(P^x)$ with respect to $m$. Thus, for any non-negative measurable $f$
\[
\int_a^b u^*_{ab}(x,y)f(y) g^2(y)m(dy)=\int_{\ell}^r Q^x(L^y_{T_{ab}}) f(y) m(dy).
\]
On the other hand, $L^y$ is a CAF for $X$ under $(Q^x)$ and its support is contained in $\{y\}$ since $Q^x <<P^x$ on $\cF_t^*$  for every $t$ when restricted to $[t<\zeta]$. Then, by Proposition 68.1 in \cite{GTMP} $L^y$ is proportional to the local time at $y$ with respect to $m_g$ under $Q^x$. Therefore, $Q^x(L^y_{T_{ab}})= \alpha u^*_{ab}(x,y)$ for some $\alpha>0$, which can be easily seen equal to $g^2(y)$ in view of the above. This in turn implies the Revuz measure for $L^y$ is given by $g^2(y)\epsilon_y(dx)$, where $\epsilon_y$ is the Dirac measure at $y$. The proof is now complete since if $B$ is a CAF with  Revuz measure $\mu$ under  $(P^x)$ for the speed measure $m$, $B=\int_{\ell}^r\mu(dy)  L^y $.
\end{proof}
\begin{remark} \label{r:cofm}
	A quick inspection of the proof reveals that Theorem \ref{t:cofm} remain valid if $g =c_1 g_1 + c_2 g_2$, where $c_i\geq 0$ and $(g_i,A)$ are \Ito-Watanabe pairs with semi-uniformly integrable $g_i$s. Thus, it is valid for all \Ito-Watanabe pairs in view of Theorem \ref{t:representation}.
\end{remark}
Remarkably  \Ito-Watanabe pairs transform recurrent diffusions to transient ones. 
\begin{corollary} \label{c:trtransform1}
	Suppose $X$ is recurrent and $A$ is a CAF with $\mu_A(\bfE)>0$. Then $P^x(A_{\infty})=\infty)=1$. Consider $g=c_1 g_{r} + c_2 g_{\ell}$, where $g_{\ell}$ and $g_r$ are respectively the decreasing and increasing functions defined in Theorem \ref{t:semiinf_s}, $c_i\geq 0$ and $c_1+c_2>0$. Let $(Q^x)_{x\in \elr}$ denote the family of measures defined in Theorem \ref{t:cofm}. Then $X$ is transient under $(Q^x)_{x\in \elr}$. Moreover, 
		\begin{enumerate}
			\item If $c_1=0$, $Q^x(X_{\zeta -}=\ell)=1$.
			\item If $c_2=0$, $Q^x(X_{\zeta -}=r)=1$.
			\item If $c_1$ and $c_2$ are non-zero, $Q^x(X_{\zeta -}=r)>0$ and $Q^x(X_{\zeta -}=\ell)>0$.
		\end{enumerate}
\end{corollary}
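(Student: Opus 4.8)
The plan is to read everything off the scale function $s_g(dx)=g^{-2}(x)\,ds(x)$ of the $Q$-diffusion supplied by Theorem \ref{t:cofm} (applicable to $g=c_1 g_r+c_2 g_\ell$ through Remark \ref{r:cofm}). First, recurrence gives $\zeta=\infty$, and part (1) of Theorem \ref{t:Afinite} gives $A_\infty=A_\zeta=\infty$ $P^x$-a.s., which is the stated preliminary claim. The decisive structural observation is that, since the density process is $\frac{dQ^x}{dP^x}\big|_{\cF_t}=g(X_t)\exp(-A_t)/g(x)$ and $g_r(X)\exp(-A)$, $g_\ell(X)\exp(-A)$ are $P^x$-local martingales, both $g_r/g$ and $g_\ell/g$ are $Q^x$-local martingales, hence harmonic for the $Q$-diffusion, hence affine in $s_g$: $g_r/g=\alpha_1+\beta_1 s_g$ and $g_\ell/g=\alpha_2+\beta_2 s_g$. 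Transience follows the moment one boundary is reachable in scale ($s_g(\ell+)>-\infty$ or $s_g(r-)<\infty$), and the three cases are separated by which of these limits is finite.

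The heart of the argument is to compute the two boundary limits of $t:=g_\ell/g_r$. Using the explicit formulae of Theorem \ref{t:semiinf_s} with recurrence ($T_\ell=T_r=\infty$, so the indicators equal $1$), one obtains for $x<c$ the identity
\[
\frac{g_r(x)}{g_\ell(x)}=E^x[\exp(-A_{T_c})]\,E^c[\exp(-A_{T_x})]\le E^c[\exp(-A_{T_x})].
\]
As $x\downarrow\ell$ the hitting times $T_x$ increase to $\infty$ a.s.: their supremum cannot be finite, for otherwise continuity would force $X$ to attain $\ell$, contradicting inaccessibility of $\ell$ ($s(\ell)=-\infty$). Hence $A_{T_x}\uparrow A_\infty=\infty$ and the right-hand side tends to $0$ by dominated convergence, so $t(\ell+)=\infty$; the mirror computation with $x>c$ and $x\uparrow r$ gives $t(r-)=0$. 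In particular $t$ is strictly decreasing, so $p:=g_r/g=(c_1+c_2t)^{-1}$ is increasing and $q:=g_\ell/g=t\,p$ is decreasing, and each is non-constant exactly when the relevant $c_i$ is positive, fixing $\beta_1>0$ (if $c_2>0$) and $\beta_2<0$ (if $c_1>0$).

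It remains to translate $t(\ell+)=\infty$, $t(r-)=0$ into statements about $s_g$. If $c_2>0$ then $p$ is non-constant and $p(\ell+)=0$, forcing $s_g(\ell+)=(0-\alpha_1)/\beta_1>-\infty$; if $c_1>0$ then $q$ is non-constant and $q(r-)=0$, forcing $s_g(r-)=(0-\alpha_2)/\beta_2<\infty$. Since $c_1+c_2>0$, at least one endpoint is reachable in scale, so $X$ is transient under $Q^x$. For the dichotomy: when $c_2=0$ one has $q=t/c_1$ with $q(\ell+)=\infty$, whence $s_g(\ell+)=-\infty$ while $s_g(r-)<\infty$, so the $Q$-local martingale $s_g(X)$ is bounded above and converges to $s_g(r-)$, giving $Q^x(X_{\zeta-}=r)=1$, which is (2); case (1) is the mirror image and yields $Q^x(X_{\zeta-}=\ell)=1$. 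When both $c_i>0$, both $s_g(\ell+)$ and $s_g(r-)$ are finite, $s_g(X)$ is a bounded martingale converging to one of the two endpoints, and optional stopping gives $Q^x(X_{\zeta-}=r)=\frac{s_g(x)-s_g(\ell+)}{s_g(r-)-s_g(\ell+)}\in(0,1)$ together with the complementary inequality for $\ell$, which is (3).

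The main obstacle is the limit computation of the second paragraph: the entire argument hinges on $E^c[\exp(-A_{T_x})]\to 0$ as $x\to\ell$ (and as $x\to r$), for which the two ingredients $A_\infty=\infty$ and inaccessibility of the boundaries must be combined to force $T_x\uparrow\infty$ and hence $A_{T_x}\uparrow\infty$ a.s. The remaining bookkeeping—that a continuous function whose composition with $X$ is a $Q$-local martingale is $s_g$-affine, that strict monotonicity of $t$ makes $\beta_1,\beta_2\neq 0$, and the standard scale-function boundary classification for one-dimensional diffusions—is routine.
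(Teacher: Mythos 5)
Your argument is correct, but it reaches the decisive intermediate fact --- finiteness of $s_g$ at the relevant endpoint --- by a genuinely different mechanism than the paper. The paper first proves $g_r(r-)=g_\ell(\ell+)=\infty$ by contradiction: if, say, $g_r(r-)<\infty$, then $g_r(X)\exp(-A)$ would be a bounded martingale converging a.s.\ to $0$ (because $A_\infty=\infty$), forcing $g_r\equiv 0$; it then passes WLOG to natural scale and uses $s$-convexity of $g$ to get a linear minorant near the divergent endpoint, whence $\int g^{-2}\,ds<\infty$ there. You instead extract the boundary limits of $t=g_\ell/g_r$ directly from the stochastic representation of Theorem \ref{t:semiinf_s}, via $g_r(x)/g_\ell(x)\le E^c[\exp(-A_{T_x})]\to 0$ as $x\downarrow\ell$ (monotone convergence, with inaccessibility $-s(\ell)=s(r)=\infty$ forcing $T_x\uparrow\infty$ and $A_{T_x}\uparrow A_\infty=\infty$), and then use that $g_r/g$ and $g_\ell/g$ are $Q$-harmonic, hence affine in $s_g$, to convert those limits into finiteness or infiniteness of $s_g(\ell+)$ and $s_g(r-)$. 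Both proofs pivot on Theorem \ref{t:Afinite}; yours avoids the natural-scale reduction and the convexity growth estimate, treats the three cases uniformly, and yields as a by-product the exact exit law $Q^x(X_{\zeta-}=r)=\frac{s_g(x)-s_g(\ell+)}{s_g(r-)-s_g(\ell+)}$ in case (3), which the paper leaves implicit; the paper's route is shorter given Lemma \ref{l:mainR} and isolates the divergence $g_r(r-)=g_\ell(\ell+)=\infty$, which has independent interest. The one step you should make explicit (it is indeed routine) is the transfer rule that $F(X)$ is a $Q^x$-local martingale iff $F(X)g(X)\exp(-A)$ is a $P^x$-local martingale, which follows from (\ref{e:RN}) applied at exit times of compact intervals.
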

\begin{proof} That $P^x(A_{\infty})=\infty)=1$ follows from Theorem \ref{t:Afinite}.
	
	Proof of the remaining statements will be given when $c_i$s do not vanish as the other cases are treated similarly. First observe that $g_r(r-)=\infty$. Indeed, if $g_r(r-)<\infty$, $g_r(X)\exp(-A)$ will be a bounded martingale with limit $0$ at infinity since $A_{\infty}$ is infinite. However, this would render $g_r(x)=0$ for all $x$ by the martingale property of $g_r(X)\exp(-A)$. Similarly, $g_{\ell}(\ell+)=\infty$.
	
	Suppose $ds(x)=dx$, without loss of generality, and note that
	\[
	s_g(\ell+)=-\int_{\ell}^c \frac{1}{g^2(x)}dx.
	\]
	Since $g$ is convex and $g(\ell+)=+\infty$, there exists $x^*<0$ and $k>0$ such that  $g(x)> -kx$ for all $x <x^*$. Thus, $s_g(\ell+)>-\infty$. Similarly, $s_g(r-)<\infty$. This proves the claim.
\end{proof}
The result above is a general case of the transient transformation considered in Proposition 5.1 in \cite{rectr}. A version  exists for transient diffusions as well.
\begin{corollary} \label{c:trtransform2}
	Suppose that $X$ is transient and $A$ is a CAF with $\mu_A(\bfE)>0$. Consider $g=c_1 g_{r} + c_2 g_{\ell}$, where $g_{\ell}$ and $g_r$ are respectively the decreasing and increasing functions defined in Theorem \ref{t:semiinf_s}, $c_i\geq 0$ and $c_1+c_2>0$. Let $(Q^x)_{x\in \elr}$ denote the family of measures defined in Theorem \ref{t:cofm}. Then $X$ is transient under $(Q^x)_{x\in \elr}$. Moreover, $Q^x(X_{\zeta-}=\ell)>0$ if $c_2>0$ and $Q^x(X_{\zeta-}=r)>0$ if $c_1>0$.
\end{corollary}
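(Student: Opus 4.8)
The plan is to reduce the entire statement to the boundary behaviour of the scale function $s_g$ of $X$ under $(Q^x)$. By Theorem \ref{t:cofm} together with Remark \ref{r:cofm}, under $(Q^x)$ the process is a regular diffusion with no killing whose scale satisfies $s_g(dx)=g^{-2}(x)\,ds(x)$; fixing the reference point $c$ this gives $s_g(\ell+)=-\int_\ell^c g^{-2}\,ds$ and $s_g(r-)=\int_c^r g^{-2}\,ds$. Such a diffusion is recurrent exactly when $s_g(\ell+)=-\infty$ and $s_g(r-)=+\infty$ simultaneously, and in the transient case it converges to a boundary with the limiting exit law $Q^x(X_{\zeta-}=\ell)=\frac{s_g(r-)-s_g(x)}{s_g(r-)-s_g(\ell+)}$ (and the symmetric expression at $r$). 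Consequently it suffices to prove that $c_2>0$ forces $s_g(\ell+)>-\infty$ and $c_1>0$ forces $s_g(r-)<\infty$: transience is then immediate from $c_1+c_2>0$, and each of these finiteness properties makes the corresponding limiting exit probability strictly positive.

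For the first implication I would use only monotonicity and the trivial discount bound $\exp(-A_{T_x})\le 1$. Since $c_2>0$ and $g_r\ge 0$ we have $g\ge c_2 g_\ell$ on $(\ell,c)$, so $\int_\ell^c g^{-2}\,ds\le c_2^{-2}\int_\ell^c g_\ell^{-2}\,ds$ and it is enough to control the latter. From the explicit form in Theorem \ref{t:semiinf_s}, for $x<c$,
\[
g_\ell(x)^{-1}=a^{-1}E^c[\chf_{[T_x<T_r]}\exp(-A_{T_x})]\le a^{-1}P^c(T_x<T_r)=a^{-1}\frac{s(r)-s(c)}{s(r)-s(x)},
\]
so that $g_\ell(x)\ge a\,(s(r)-s(x))/(s(r)-s(c))$. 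The substitution $u=s(x)$ then reduces the integrability of $g_\ell^{-2}$ near $\ell$ to that of $(s(r)-u)^{-2}$.

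The verification splits according to the left boundary. If $s(\ell)>-\infty$, the cruder bound $g_\ell\ge g_\ell(c)=a$ (monotonicity) already gives $\int_\ell^c g_\ell^{-2}\,ds\le a^{-2}(s(c)-s(\ell))<\infty$. If $s(\ell)=-\infty$, transience of $X$ under $(P^x)$ rules out $s(r)=+\infty$, so $s(r)<\infty$, and the linear-in-scale bound above together with $\int_{-\infty}^{s(c)}(s(r)-u)^{-2}\,du=(s(r)-s(c))^{-1}$ yields $\int_\ell^c g_\ell^{-2}\,ds<\infty$. Either way $s_g(\ell+)>-\infty$. The implication $c_1>0\Rightarrow s_g(r-)<\infty$ is entirely symmetric: one uses $g\ge c_1 g_r$ and the bound $g_r(x)\ge a\,(s(x)-s(\ell))/(s(c)-s(\ell))$ for $x>c$, splitting instead on whether $s(r)<\infty$ (constant bound) or $s(r)=+\infty$ (so $s(\ell)>-\infty$ by transience, and the linear bound applies).

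The only genuinely delicate point is this last dichotomy: when the boundary one is trying to reach under $(Q^x)$ is the one that is \emph{inaccessible in scale} for the $P$-diffusion (e.g.\ $c_2>0$ while $s(\ell)=-\infty$). There the constant lower bound on the monotone function is worthless, and the argument hinges on observing that $P$-transience forbids both $s(\ell)=-\infty$ and $s(r)=+\infty$ at once, which is precisely what supplies the genuine linear growth of $g_\ell$ (resp.\ $g_r$) needed to beat the infinite scale distance. Once this is noted, everything else is a one-line change of variables and the identification of $Q^x(X_{\zeta-}=\cdot)$ with the ratios of $s_g$.
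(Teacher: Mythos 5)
Your proposal is correct, and its skeleton coincides with the paper's: both arguments reduce the corollary to showing $s_g(\ell+)>-\infty$ when $c_2>0$ (resp. $s_g(r-)<\infty$ when $c_1>0$), from which transience and the positive exit probabilities follow by standard scale-function facts, and both split on whether the scale of $X$ is finite or infinite at the boundary in question. Where you differ is in the key growth estimate at an infinite-scale boundary. The paper normalizes to natural scale and argues by convexity: since $g_\ell$ is $s$-convex, decreasing, and blows up at $\ell$ when $s(\ell)=-\infty$, it admits a linear lower bound $g(x)>-kx$, whence $g^{-2}$ is integrable near $\ell$. You instead exploit the explicit probabilistic formula of Theorem \ref{t:semiinf_s} together with the trivial bound $\exp(-A_{T_x})\le 1$, giving $g_\ell(x)\ge a\,(s(r)-s(x))/(s(r)-s(c))$ once transience has forced $s(r)<\infty$; this produces the same linear-in-scale growth without invoking convexity at all, and correctly isolates the role of transience (it is exactly what rules out $s(\ell)=-\infty$ and $s(r)=\infty$ holding together). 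Your route has two advantages: it works in an arbitrary scale without the "WLOG $ds=dx$" normalization, and it bypasses the paper's polynomial-growth step in the case $\ell>-\infty$, $g_\ell(\ell+)=\infty$ (stated in the paper as $g_\ell(x)\le (x-\ell)^{1/4}$, which as written is a typo and, even with the inequality reversed, is stronger than needed — your observation that the decreasing function $g_\ell\ge g_\ell(c)=a$ already suffices there is the cleaner argument). The paper's convexity argument, in turn, is marginally more self-contained in that it never appeals to the hitting-probability identity $P^c(T_x<T_r)=(s(r)-s(c))/(s(r)-s(x))$; but both are elementary, and your version makes explicit the reduction (recurrence iff both $s_g(\ell+)=-\infty$ and $s_g(r-)=+\infty$, exit law given by ratios of $s_g$) that the paper leaves implicit in the phrase "the finiteness of $s_g(\ell+)$ \ldots is clear."
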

\begin{proof}
	Suppose $ds(x)=dx$ without loss of generality. If $\ell >-\infty$, $0< g_{\ell}(\ell+)<\infty$ if $A_{\zeta}<\infty$ on $[X_{\zeta-}=\ell]$, and $g_{\ell}(\ell+)=\infty$ on $A_{\zeta}=\infty$ on $[X_{\zeta-}=\ell]$. In the former case the finiteness of $s_g(\ell+)$ when $c_2>0$ is clear. Moreover, if $g_{\ell}(\ell+)=\infty$, $g_{\ell}(x)\leq (x-\ell)^{\frac{1}{4}}$ on $(\ell,x^*)$ for some $x^*$. This in turn implies $s_g(\ell+)<\infty$ if $c_2>0$. If, on the other hand, $\ell=-\infty$, $g_{\ell}(\ell+)=\infty$ and $g(x)> -kx$ for all $x <x^*$  for some $x^*<0$, which implies  the finiteness of $s_g(\ell+)$ when $c_2>0$.
	
	The implication of $c_1>0$ is proved similarly.
\end{proof}
\section{Examples}\label{s:examples}
\begin{example}[Connection with the fundamental solutions of ODEs]
	Suppose that $X$ is a solution of $dX_t=\sigma(X_t)dB_t + b(X_t)dt$, where $B$ is a standard Brownian motion and the coefficients $\sigma$ and $b$ are continuous. If $A_t=\int_0^t f(X_s)ds$, for some continuous and non-negative $f$, then $\mu_A(dx)=f(x)m(dx)=\frac{2f(x)}{\sigma^2(x)s'(x)}dx$.  Moreover, the increasing and decreasing functions of Theorem \ref{t:semiinf_s} can be easily shown to satisfy the ODE
	\[
	\half \sigma^2 g'' + b g'= fg.
	\]
\end{example}
\begin{example}[Soft borders in diffusion neighbourhoods]
	Consider a one-dimensional diffusion on natural scale with the state space $\bbR$. Let $\delta >0$ and note that $\mu_A(dx)=\frac{\epsilon_1(dx)}{\delta}$ is the Revuz measure for the CAF $(2\delta)^{-1}L^1$, where $L^1$ is the semimartingale local time for $X$ at $1$. One can solve (\ref{e:IEh}) explicitly in this case to find
	\[
	g_{r}(x)= \delta +(x-1)^+ \mbox{ and } g_{\ell}(x)=\delta +(1-x)^+
	\]
	satisfying $g_r(1)=g_{\ell}(1)=\delta$. Moreover, $(g_r,A)$ and $(g_{\ell},A)$ are \Ito-Watanabe pairs due to Theorem \ref{t:main1}.
	
	If one uses $(g_r,A)$ to apply a path transformation to $X$ via Theorem \ref{t:cofm}, one obtains a transient diffusion (see Corollary \ref{c:trtransform1}) with scale function
	\[
	s_{\delta}(x)=\left\{\ba{ll}
	\frac{1}{\delta}-\frac{1}{\delta+x-1}, & \mbox{if } x\geq 1;\\
	\frac{x-1}{\delta^2}, & \mbox{if } x<1.
	\ea	\right.
	\]
	Note that $s_{\delta}(\infty)<\infty$, implying that the diffusion drifts towards infinity in the long run. Moreover, the potential density $u_{\delta}$ is given by
	\[
	u_{\delta}(x,y)=\frac{1}{\delta}-s_{\delta}(x\vee y).
	\]
	In particular if the original $X$ is a Brown motion, the dynamics of $X$ under $(Q^x)$, where $Q^x$ is as defined in Theorem \ref{t:cofm}, is given by
	\[
	dX_t=dW_t+ \chf_{[X_t >1]}\frac{1}{\delta +X_t -1}dt,
	\]
	where $W$ is a standard Brownian motion. $X$ following the above dynamics still has the whole $\bbR$ as it state space. However, it can be guessed that for smaller values of $\delta$ it must be getting harder for $X$ to move from the half space $(1,\infty)$ to $(-\infty,1)$.  By taking formal limits as $\delta \rar 0$ one can see that $X$ is no longer regular: it is a Brownian motion on $(-\infty,1]$ while $X-1$ becomes a $3$-dimensional Bessel process on $[1,\infty)$. The set $\{1\}$ can be viewed as {\em soft border} between two regimes allowing transitions from the Brownian regime to the Bessel one but not the other way around. 
	
	This formal description can be made more rigorous by analysing $L^1_{\infty}$ - the cumulative local time spent at $1$ at the lifetime. It is well-known that $L^1_{\infty}$ is exponentially distributed under $Q^1$. For a fixed $\delta>0$ the parameter of this exponential distribution equals $\frac{s'_{\delta}(1)}{2 u_{\delta}(1,1)}=\frac{1}{2\delta}$. In particular $Q^1(L^1_{\infty})=2\delta\rar 0$ as $\delta$ tends to $0$. Moreover, for $x <1<y$, $Q^y(T_x<\infty)=1$ whereas
	\[
	Q^x(T_y<\infty)=\frac{u_{\delta}(x,y)}{u_{\delta}(y,y)}=\delta^2\frac{\delta +1-y}{\delta+x-1}\rar 0 \mbox{ as } \delta \rar 0.
	\]
	Thus, the transitions from $(-\infty,1)$ to $[1,\infty)$ continue as $\delta$ gets small. However, once the border $\{1\}$ is reached, $X$ is strongly pulled into the interior of $[1,\infty)$ and finds it increasingly difficult to get back to the border.
\end{example}

\begin{example}[Hard borders in diffusion neighbourhoods] \label{ex:hardborder}
As a continuation of the above example suppose $X$ is a standard Brownian motion but set $g=\lambda g_r +g_{\ell}
$ for some $\lambda >0$. In this case the path transformation via $(g,A)$ results in a transient diffusion that can go both $+\infty$ and $-\infty$ in the limit. Straightforward computations yield
	\[
s_{\delta}(x)=\left\{\ba{ll}
\frac{1}{\lambda}\left(\frac{1}{(1+\lambda)\delta}-\frac{1}{(1+\lambda)\delta+\lambda(x-1)}\right), & \mbox{if } x\geq 1;\\
\frac{1}{(1+\lambda)\delta+1-x}-\frac{1}{(1+\lambda)\delta}, & \mbox{if } x<1.
\ea	\right.
\]
Moreover, for $x<1<y$
\bean
Q^x(T_y<\infty)&=&\frac{ \delta(\delta(1+\lambda) +\lambda(y-1))}{( \delta(1+\lambda)+1-x)(\delta+ y-1)}\\ Q^y(T_x<\infty)&=&\frac{\delta(\delta(1+\lambda)+1-x)}{( \delta(1+\lambda)+\lambda (y-1))(\delta+ 1-x)}.
\eean
 Observe that both probabilities approach to $0$ as $\delta\rar 0$ indicating a {\em hard border} in the long run. 

On the other hand, for $x <1<y$, one has $Q^1(T_y<\infty)\rar \frac{\lambda}{1+\lambda}$ and $Q^1(T_x<\infty)=\frac{1}{1+\lambda}$. That is, if the process start at the hard border, it will end up in  the upper regime $(1,\infty)$ with probability $\lambda /1+\lambda$ as $\delta$ tends to $0$.
\end{example}

\begin{example}[Three communities with soft borders] In the last two examples are two distinct regimes in the limit. However, it is possible to divide the state space into more regions by using a mixture of local times at different levels.  
	
For instance in the setting of Example \ref{ex:hardborder} consider the CAF $A$ with the Revuz measure $\mu_A=\frac{1}{\delta}(\epsilon_{-1}+\epsilon_1)$. Then assuming $g_r(-1)=\delta$ and solving
\[
g_r(x)=\delta + (x \vee (-1)+1) + (x \vee 1-1)\frac{g_r(1)}{\delta}
\]
lead to $g_r(1)=\delta +2$ and, therefore,
\[
g_r(x)=\left\{\ba{ll}
\delta, & \mbox{if } x\leq -1;\\
x+1+\delta,  & \mbox{if } -1<x\leq 1;\\
\delta -\frac{2}{\delta} + x\frac{2(\delta+1)}{\delta}, & \mbox{if } x\geq 1,
\ea	\right.
\]
is an increasing subharmonic function such that $(g_r,A)$ is an \Ito-Watanabe pair. 

Similar considerations yield
\[
g_{\ell}(x)=\left\{\ba{ll}
\delta -(x+1)\frac{2(\delta+1)}{\delta} , & \mbox{if } x\leq -1;\\
\delta-(x+1)\frac{\delta}{\delta+2},  & \mbox{if } -1<x\leq 1;\\
\delta -\frac{2\delta}{\delta+2}& \mbox{if } x\geq 1,
\ea	\right.
\]
which is a decreasing subharmonic function such that $(g_{\ell},A)$ s an \Ito-Watanabe pair. 

Thus,  $g:=\frac{\delta}{\delta+2}g_r +g_{\ell}$ is constant on $(-1,1)$ and one should expect no change in the drift when $X$ belongs $(-1,1)$ after the application of Theorem \ref{t:cofm} via $(g,A)$. In fact an easy application of Girsanov's theorem shows that under $Q^x$
\[
dX_t=dW_t + b_{\delta}(X_t)dt,
\]
where $W$ is a $Q^x$-Brownian motion and 
\[
b_{\delta}(x)=\left\{\ba{ll}
\frac{-(\delta+1)}{\delta^2-1-x(\delta+1)}, & \mbox{if } x\leq -1;\\
0,  & \mbox{if } -1<x\leq 1;\\
\frac{\delta+1}{\delta^2-1+x(\delta+1)}& \mbox{if } x\geq 1.
\ea	\right.
\]
Clearly, as $\delta \rar 0$, the drift term converges to 
\[
b(x)=\left\{\ba{ll}
\frac{1}{x+1}, & \mbox{if } x\leq -1;\\
0,  & \mbox{if } -1<x\leq 1;\\
\frac{1}{x-1}& \mbox{if } x\geq 1.
\ea	\right.
\]
This indicates three regions $(-\infty, -1]$, $(-1,1)$, and $[1,\infty)$, where $X$ behaves like a Brownian motion in $(-1,1)$ and is able to move to the upper and lower regions. However, once $X$ enters  $(-\infty, -1]$ or $[1,\infty)$, it is not possible to exit these domains. More precisely, $-1-X$ and $X-1$ behave like $3$-dimensional Bessel processes on $(-\infty, -1]$ and $[1,\infty)$, respectively.  Following the terminology of the previous examples this can be regarded as three neighbourhoods with two soft borders. 
\end{example}
\section{Optimal stopping with random discounting} \label{s:OS}
Consider  the optimal stopping problem
\be \label{e:OSP}
V(x):=\sup_{\tau}E^x[e^{-A_{\tau}}f(X_{\tau})\chf_{[\tau<\zeta]}],
\ee
where $A$ is a CAF with Revuz measure $\mu_A$ with  $\mu_A(\bfE)>0$ and $f$ is continuous on $\bfE$.

The essence of the method that will be employed to solve the above problem is the removal of the discounting via a measure change that is developed in Section \ref{s:transform}. To this end set $g:=\lambda_1 g_r +\lambda_2 g_{\ell}$, where $g_r$ and $g_{\ell}$ are the increasing and decreasing functions defined in Theorem \ref{t:semiinf_s} and $\lambda_i$s are strictly positive constants. Then in view of Theorem \ref{t:cofm} and Remark \ref{r:cofm} there exists a unique family of measures $(Q^x)$ on $(\Omega,\cF^u)$ that renders $X$ a regular diffusion and satisfies (\ref{e:RN}). 

The next results gives a necessary condition for the finiteness of $V$.
\begin{proposition}
	Let $V$ be as defined in (\ref{e:OSP}). Then $V$ is finite only if $\frac{f}{g}$ is bounded on $\elr$. 
\end{proposition}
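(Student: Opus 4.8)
The plan is to argue by contraposition: assuming $\sup_{y\in\elr} f(y)/g(y)=+\infty$, I would exhibit a sequence of stopping times along which the discounted reward diverges, forcing $V(x)=+\infty$ for every $x$. Since for a maximisation problem only large positive values of the reward can render the supremum infinite, this is exactly the necessary condition asserted (and it coincides with two-sided boundedness in the case $f\geq 0$). The first step is to localise the unboundedness of $h:=f/g$. Because $f$ is continuous on $\elr$ while $g$ is continuous and strictly positive there, $h$ is continuous on $\elr$ and hence bounded on every compact subinterval $[a,b]\subset\elr$. Consequently, if $\sup h=+\infty$ there must be a sequence $(y_n)$ with $h(y_n)\to+\infty$ and $y_n\to\ell$ or $y_n\to r$; by symmetry I assume $y_n\uparrow r$, so that $y_n>x$ for all large $n$.

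Next I would test the supremum in (\ref{e:OSP}) with the hitting times $\tau_n:=T_{y_n}$. On the event $\{T_{y_n}<\zeta\}$ one has $X_{T_{y_n}}=y_n$, so that $f(X_{\tau_n})=f(y_n)$ and $g(X_{\tau_n})=g(y_n)$ are deterministic. Applying the change-of-measure identity (\ref{e:RN}) with $F=\Omega$ and $T=T_{y_n}$, and pulling the constant $g(y_n)$ out of the expectation, gives
\[
E^x\!\left[\chf_{[T_{y_n}<\zeta]}e^{-A_{T_{y_n}}}\right]=\frac{g(x)}{g(y_n)}\,Q^x(T_{y_n}<\zeta),
\]
and therefore, using again that $f(X_{\tau_n})=f(y_n)$ on $\{\tau_n<\zeta\}$,
\[
V(x)\;\geq\; E^x\!\left[e^{-A_{\tau_n}}f(X_{\tau_n})\chf_{[\tau_n<\zeta]}\right]= g(x)\,h(y_n)\,Q^x(T_{y_n}<\zeta).
\]
Note that this uses only the defining relation (\ref{e:RN}), so no uniform integrability of $g(X)\exp(-A)$ is required. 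It then remains to bound $Q^x(T_{y_n}<\zeta)$ from below uniformly in $n$.

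This uniform lower bound is the crucial and most delicate step. Here I would invoke that, since $g=\lambda_1 g_r+\lambda_2 g_\ell$ with $\lambda_1,\lambda_2>0$, the process $X$ is transient under $(Q^x)$ and reaches $r$ in the limit with positive probability, that is $Q^x(X_{\zeta-}=r)>0$, by Corollary \ref{c:trtransform1} (when $X$ is recurrent) or Corollary \ref{c:trtransform2} (when $X$ is transient), where the hypothesis $\lambda_1>0$ is precisely what is needed. By continuity of the paths, on the event $\{X_{\zeta-}=r\}$ the trajectory attains every level in $[x,r)$, in particular $y_n$, whence $\{X_{\zeta-}=r\}\subseteq\{T_{y_n}<\zeta\}$ for $y_n>x$. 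Therefore $Q^x(T_{y_n}<\zeta)\geq Q^x(X_{\zeta-}=r)=:\delta>0$ for all large $n$, and the previous display yields $V(x)\geq g(x)\,\delta\,h(y_n)\to+\infty$, a contradiction. The case $y_n\downarrow\ell$ is handled identically, this time using $\lambda_2>0$ and $Q^x(X_{\zeta-}=\ell)>0$.

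I expect the main obstacle to be precisely the justification that $Q^x(T_{y_n}<\zeta)$ does not degenerate to $0$ as $y_n$ approaches the boundary. The point is that this lower bound is supplied essentially for free by the transience-towards-both-ends established in Corollaries \ref{c:trtransform1}--\ref{c:trtransform2}, which is exactly where the assumption $\lambda_1,\lambda_2>0$ is exploited; everything else reduces to the continuity of $h$ on the interior, which confines any blow-up of $f/g$ to the two boundary points.
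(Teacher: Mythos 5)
Your proof is correct and follows essentially the same route as the paper's: both rewrite the discounted reward via the change-of-measure identity (\ref{e:RN}) and then invoke Corollaries \ref{c:trtransform1}--\ref{c:trtransform2} to keep the relevant $Q^x$-probabilities bounded away from zero as the test points approach the boundary. The only real difference is that you test with one-sided hitting times $T_{y_n}$ rather than the paper's two-sided exit times $T_{a_nb_n}$, which is a minor variant but in fact treats a sign-changing $f$ (with $f/g\to+\infty$ at one endpoint and $f/g\to-\infty$ at the other) more cleanly, since only the value of $f/g$ at the single point $y_n$ enters and no cancellation between the two endpoints can occur.
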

\begin{proof}
By (\ref{e:RN}) for any $\ell<a<x<b<r$
\[
E^x[e^{-A_{T_{ab}}}f(X_{T_{ab}})]=g(x)Q^x\left[\frac{f(X_{T_{ab}})}{g(X_{T_{ab})}}\right].
\]
If $\frac{f}{g}$ is unbounded, there is a sequence of $a_n \rar \ell$ and $b_n \rar r$ such that either $\frac{f(a_n)}{g(a_n)}$ or $\frac{f(b_n)}{g(b_n)}$ diverges to infinity. Moreover, Corollaries \ref{c:trtransform1} and \ref{c:trtransform2} imply that $Q^x(X_{\zeta-}=\ell)Q^x(X_{\zeta-}=r)>0$. Thus,
\[
\lim_{n \rar \infty}Q^x\left[\frac{f(X_{T_{a_n b_n}})}{g(X_{T_{a_n b_n}})}\right]=\infty.
\]
\end{proof}
Given the above necessary condition of finiteness solution of (\ref{e:OSP}) becomes equivalent to that of an optional stopping problem without discounting as shown in the following result, whose proof follows very closely the arguments used in the proof of Theorem 7.1 in \cite{rectr}.
\begin{theorem}
	\label{t:OS} Suppose $\frac{f}{g}$ is bounded on $\elr$. Then $V(x)=g(x) G(s_g(x))$, where $s_g$ is as in Theorem \ref{t:cofm} and $G$ is the smallest concave majorant of $\frac{f(s_g^{-1})}{g(s_g^{-1})}$. Moreover, 
		\[
	\tau_{\eps}^*:=\inf\left\{t\geq 0:\frac{f(X_t)}{g(X_t)}+\eps \geq G(s_g(X_t)) \right\}
	\]
	is $\delta$-optimal in the sense that for any $\delta >0$ there exists $\eps^*>0$ such that $E^x[e^{-A_{\tau^*_{\eps}}}f(X_{\tau^*_{\eps}})\chf_{[\tau_{\eps}^*<\zeta]}]> V(x)-\delta$ for all $\eps<\eps^*$.
	Furthermore, the stopping time  
		\[
	\tau^*:=\inf\left\{t\geq 0: f(X_t)\geq g(X_t)G(s_g(X_t)) \right\}
	\]
	is optimal if and only if $G(s_g(X_{\tau^*-}))\chf_{[\tau^*=\zeta]}=0$, $Q^x$-a.s..
\end{theorem}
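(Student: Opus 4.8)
The plan is to use the change of measure (\ref{e:RN}) to remove the random discount and reduce (\ref{e:OSP}) to an \emph{undiscounted} optimal stopping problem for $X$ under $(Q^x)$, a regular transient diffusion on natural scale $s_g$, and then appeal to the concave-majorant description of the value function, exactly as in Theorem 7.1 of \cite{rectr}. First I would note that, applying (\ref{e:RN}) with the bounded function $\psi:=f/g$, every stopping time $\tau$ satisfies $E^x[e^{-A_{\tau}}f(X_{\tau})\chf_{[\tau<\zeta]}]=g(x)\,Q^x[\psi(X_{\tau})\chf_{[\tau<\zeta]}]$, so that $V=g\,\tilde V$ with $\tilde V(x):=\sup_{\tau}Q^x[\psi(X_{\tau})\chf_{[\tau<\zeta]}]$. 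By Corollaries \ref{c:trtransform1} and \ref{c:trtransform2} the process is transient under $(Q^x)$, whence $\alpha:=s_g(\ell+)$ and $\beta:=s_g(r-)$ are finite and $Y:=s_g(X)$ is a bounded $Q^x$-martingale on $I:=(\alpha,\beta)$ converging $Q^x$-a.s. at $\zeta$ to one of the endpoints. In the scale coordinate the reduced problem is $\tilde V(x)=\sup_{\tau}Q^x[\tilde\psi(Y_{\tau})\chf_{[\tau<\zeta]}]$ with $\tilde\psi:=\psi\circ s_g^{-1}$ bounded on $I$ and $Y_0=s_g(x)$.

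For the inequality $V\le g\,(G\circ s_g)$ I would let $G$ be the smallest concave majorant of $\tilde\psi$ on $I$, where the indicator $\chf_{[\tau<\zeta]}$, which discards the reward on $[\tau=\zeta]$, is encoded by assigning $\tilde\psi$ the value $0$ at $\alpha$ and $\beta$; this forces $G\ge 0$, since a concave function that is non-negative at both endpoints of $I$ dominates the chord joining them. Boundedness of $\tilde\psi$ makes $G$ bounded, so $G(Y)$ is a concave function of a bounded martingale and hence a bounded $Q^x$-supermartingale. Optional sampling at $\tau\wedge\zeta$, together with the a.s.\ convergence of $Y$ at $\zeta$, then gives $G(s_g(x))\ge Q^x[G(Y_{\tau})]\ge Q^x[\tilde\psi(Y_{\tau})\chf_{[\tau<\zeta]}]$, using $G\ge\tilde\psi$ in the interior and $G\ge 0$ at the endpoints; taking the supremum over $\tau$ yields $\tilde V\le G\circ s_g$.

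The matching lower bound and the two stopping rules rest on the classical fact that $G$ is \emph{affine} on each connected component of the continuation set $\{G>\tilde\psi\}$, so that $G(Y)$ is there a genuine bounded $Q^x$-martingale. Since $g>0$, the region defining $\tau^*$ is $\{t:f(X_t)\ge g(X_t)G(s_g(X_t))\}=\{t:\tilde\psi(Y_t)=G(Y_t)\}$, the contact set; because the paths are continuous they remain in a single component up to $\tau^*$, and optional stopping of the bounded martingale $G(Y)^{\tau^*}$ gives $Q^x[\tilde\psi(Y_{\tau^*})\chf_{[\tau^*<\zeta]}]=G(s_g(x))-Q^x[G(Y_{\zeta-})\chf_{[\tau^*=\zeta]}]$. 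As $G\ge 0$ at the endpoints, the subtracted term is non-negative and equals $0$ precisely when $G(s_g(X_{\tau^*-}))\chf_{[\tau^*=\zeta]}=0$ a.s., which is the asserted necessary and sufficient condition for optimality of $\tau^*$. For $\tau^*_{\eps}$ I would repeat the computation on the smaller set $\{G>\tilde\psi+\eps\}$, on which $G$ is still affine; at the stopping instant $\tilde\psi(Y_{\tau^*_{\eps}})\ge G(Y_{\tau^*_{\eps}})-\eps$, so $Q^x[\tilde\psi(Y_{\tau^*_{\eps}})\chf_{[\tau^*_{\eps}<\zeta]}]\ge G(s_g(x))-\eps-Q^x[G(Y_{\zeta-})\chf_{[\tau^*_{\eps}=\zeta]}]$, and multiplying by $g(x)$ shows the value achieved by $\tau^*_{\eps}$ is at least $V(x)-g(x)\big(\eps+Q^x[G(Y_{\zeta-})\chf_{[\tau^*_{\eps}=\zeta]}]\big)$.

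The main obstacle, and the only genuinely delicate point, is the behaviour at the two \emph{finite} transient endpoints. Concretely I must establish that $G(\alpha+)=\limsup_{y\to\alpha}\tilde\psi(y)\ge 0$ and $G(\beta-)=\limsup_{y\to\beta}\tilde\psi(y)\ge 0$, so that the boundary terms above are non-negative and the clean condition $G(s_g(X_{\tau^*-}))\chf_{[\tau^*=\zeta]}=0$ captures optimality of $\tau^*$; and, for $\delta$-optimality, that $Q^x[G(Y_{\zeta-})\chf_{[\tau^*_{\eps}=\zeta]}]\to 0$ as $\eps\downarrow 0$. The latter follows once one shows that $G-\tilde\psi$ tends to $0$ along the boundary, so that the $\eps$-continuation set $\{G>\tilde\psi+\eps\}$ is bounded away from $\{\alpha,\beta\}$ and $Q^x(\tau^*_{\eps}=\zeta)\to 0$; choosing $\eps$ small then secures $V(x)-\delta$. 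Verifying that optional sampling is legitimate up to the lifetime $\zeta$ is routine given the boundedness of $G$ and the transience-driven $Q^x$-a.s.\ convergence of $Y$, so the crux is pinning down these endpoint values of the least concave majorant.
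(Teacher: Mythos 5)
Your overall strategy is the same as the paper's: reduce (\ref{e:OSP}) via (\ref{e:RN}) to an undiscounted stopping problem for the transient diffusion $Y=s_g(X)$ under $(Q^x)$, identify the value with the smallest concave majorant $G$, and characterise $\tau^*$ through the boundary term $Q^x\bigl[G(Y_{\zeta-})\chf_{[\tau^*=\zeta]}\bigr]$ coming from optional stopping of the (locally affine, hence martingale) process $G(Y)$ stopped at $\tau^*$; this last part of your argument is essentially identical to the paper's. The difference is that the paper does not re-derive the classical facts you prove by hand: it cites Shiryaev (Theorem 1 of Section 3.3 for $G\circ s_g=\sup_\tau Q^x[\tilde\psi(Y_\tau)\chf_{[\tau<\zeta]}]$, and Lemma 8 of Section 3.2 with Theorem 2 of Section 3.3 for $G(s_g(x))=Q^x[G(s_g(X_{\tau^*_\eps}))]$ together with $Q^x(\tau^*_\eps<\zeta)=1$, the latter resting on lower semicontinuity of $\frac{f}{g}\chf_{\elr}$ extended by $0$ to the boundary).

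This outsourcing is exactly where your proposal has a genuine gap: the step you yourself flag as ``the crux'' -- controlling $Q^x\bigl[G(Y_{\zeta-})\chf_{[\tau^*_\eps=\zeta]}\bigr]$ -- is left unproved, and without it you have neither the lower bound $V\ge g\,(G\circ s_g)$ nor $\delta$-optimality, i.e.\ none of the three assertions of the theorem is actually established. Worse, the route you sketch to close it is false in general: it is not true that $G-\tilde\psi\to 0$ at the endpoints, nor that the $\eps$-continuation set $\{G>\tilde\psi+\eps\}$ is bounded away from $\{\alpha,\beta\}$. The correct endpoint fact (for $G$ the smallest concave majorant with your boundary convention $\tilde\psi(\alpha)=\tilde\psi(\beta)=0$) is $G(\alpha+)=\max\bigl(0,\limsup_{y\downarrow\alpha}\tilde\psi(y)\bigr)$, which only yields $\liminf_{y\downarrow\alpha}\bigl(G(y)-\tilde\psi(y)\bigr)=0$ when the $\limsup$ is nonnegative; if $\tilde\psi$ oscillates near $\alpha$ the continuation set accumulates at $\alpha$ along with the stopping set, and if $\limsup_{y\downarrow\alpha}\tilde\psi(y)<0$ the $\eps$-stopping set does not accumulate at $\alpha$ at all. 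The repair is a case split at each endpoint: either $G(\alpha+)=0$, in which case $G(Y_{\zeta-})\chf_{[Y_{\zeta-}=\alpha]}=0$ identically; or $G(\alpha+)=\limsup_{y\downarrow\alpha}\tilde\psi(y)>0$, in which case there exist points $y_n\downarrow\alpha$ with $G(y_n)-\tilde\psi(y_n)\to 0$, so every continuous path with $Y_{\zeta-}=\alpha$ hits the $\eps$-stopping region before $\zeta$ by the intermediate value theorem, making $[\tau^*_\eps=\zeta,\,Y_{\zeta-}=\alpha]$ null. Since $Q^x(Y_{\zeta-}\in\{\alpha,\beta\})=1$ by transience, both cases give $Q^x\bigl[G(Y_{\zeta-})\chf_{[\tau^*_\eps=\zeta]}\bigr]=0$ for every $\eps>0$, which completes your lower bound and the $\delta$-optimality. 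With this lemma added (and it is a lemma, not a routine verification), your self-contained argument would be a valid alternative to the paper's citation of Shiryaev -- indeed one that makes explicit the boundary hypotheses the paper leaves implicit in its lower-semicontinuity claim.
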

\begin{proof}
It follows from Theorem 1 in Section 3.3 in \cite{Shiryaev} that 
\[
G(s_g(x))=\sup_{\tau}Q^x\left[\chf_{[\tau<\zeta]}\frac{f(X_{\tau})}{g(X_{\tau})}\right]
\]
since excessive functions are concave functions of $s_g$. Note that since $G$ is concave and $s_g$ is increasing, $G(s_g)$ can be defined by continuity at $\ell$ and $r$.  

Moreover,  Lemma 8  in Section 3.2 and Theorem 2 in  Section 3.3 of \cite{Shiryaev} yield
\be \label{e:Gepsopt}
G(s_g(x))=Q^x\left[G(s_g(X_{\tau^*_{\eps}}))\right]
\ee
and  $Q^x(\tau_\eps<\zeta)=1$ due to the fact that $x \mapsto \frac{f(x)}{g(x)}\chf_{x\in \elr}$ is lower semicontinuous. Thus,
\be \label{e:epslbd}
v(x)=E^x[e^{-A_{\tau_{\eps}^*}}v(X_{\tau_{\eps}^*})\chf_{[\tau_{\eps}^*<\zeta]}],
\ee
where $v:=g G(s_g)$.

The first consequence of the above is that $V=v$. Indeed,  
\[
E^x[e^{-A_{\tau}}f(X_{\tau})\chf_{[\tau<\zeta]}]=g(x)Q^x\left[\frac{f(X_{\tau})}{g(X_{\tau})}\chf_{[\tau<\zeta]}\right]\leq v(x).
\]
That is, $v$ is an upper bound for $V$. On the other hand, 
\be\label{e:epsoptV}
V(x)\geq  E^x[e^{-A_{\tau^*_{\eps}}}f(X_{\tau^*_{\eps}})\chf_{[\tau_{\eps}^*<\zeta]}]=g(x)Q^x\left[\frac{f(X_{\tau^*_{\eps}})}{g(X_{\tau^*_{\eps}})}\right]\geq g(x)Q^x[G(s_g(X_{\tau^*_{\eps}}))-\eps].
\ee
Consequently, $V(x)\geq v(x)$ by letting $\eps \rar 0$ in view of (\ref{e:Gepsopt}). This establishes that $v=V$ and $\tau_{\eps}^*$ is $\delta$-optimal at once. 

Moreover, if  $\tau^*$ is optimal,
\[
V(x)= E^x[e^{-A_{\tau^*}}f(X_{\tau^*})\chf_{[\tau^*<\zeta]}]=g(x) Q^x[G(s_g(X_{\tau^*}))\chf_{[\tau^*<\zeta]}].
\]
However,
\[
Q^x[G(s_g(X_{\tau^*}))\chf_{[\tau^*<\zeta]}]=Q^x[G(s_g(X_{\tau^*-}))]-Q^x[G(s_g(X_{\tau^*-}))\chf_{[\tau^*=\zeta]}]=G(s_g(x))-Q^x[G(s_g(X_{\tau^*-}))\chf_{[\tau^*=\zeta]}],
\]
where the last equality is following from the fact that $G(s_g(X))$ is a bounded $Q^x$-local martingale on $[0,\tau^*)$. Thus, $G(s_g(X_{\tau^*-}))\chf_{[\tau^*=\zeta]}=0$, $Q^x$-a.s..

Conversely, if $G(s_g(X_{\tau^*-}))\chf_{[\tau^*=\zeta]}=0$, $Q^x$-a.s., $G(s_g(x))=Q^x[G(s_g(X_{\tau^*-}))]=Q^x[G(s_g(X_{\tau^*}))\chf_{[\tau^*<\zeta]}]$, which implies $\tau^*$ is optimal. 
 
\end{proof}
\bibliographystyle{siam}
\bibliography{../ref.bib}

\begin{thebibliography}{10}

\bibitem{AS98}
{\sc S.~Assing and W.~M. Schmidt}, {\em Continuous strong {M}arkov processes in
  dimension one}, vol.~1688 of Lecture Notes in Mathematics, Springer-Verlag,
  Berlin, 1998.
\newblock A stochastic calculus approach.

\bibitem{BL}
{\sc M.~Beibel and H.~R. Lerche}, {\em Optimal stopping of regular diffusions
  under random discounting}, Theory of Probability \& Its Applications, 45
  (2001), pp.~547--557.

\bibitem{BG}
{\sc R.~M. Blumenthal and R.~K. Getoor}, {\em Markov processes and potential
  theory}, Pure and Applied Mathematics, Vol. 29, Academic Press, New
  York-London, 1968.

\bibitem{BorSal}
{\sc A.~N. Borodin and P.~Salminen}, {\em Handbook of {B}rownian motion---facts
  and formulae}, Probability and its Applications, Birkh\"auser Verlag, Basel,
  second~ed., 2002.

\bibitem{rectr}
{\sc U.~\c{C}etin}, {\em Diffusion transformations, {B}lack–{S}choles
  equation and optimal stopping}, Ann. Appl. Probab., 28 (2018),
  pp.~3102--3151.

\bibitem{ChungRao80}
{\sc K.~L. Chung and K.~M. Rao}, {\em A new setting for potential theory. {I}},
  Annales de l’institut Fourier, 30 (1980), pp.~167--198.

\bibitem{ChungWalsh}
{\sc K.~L. Chung and J.~B. Walsh}, {\em Markov processes, {B}rownian motion,
  and time symmetry}, vol.~249 of Grundlehren der Mathematischen Wissenschaften
  [Fundamental Principles of Mathematical Sciences], Springer, New York,
  second~ed., 2005.

\bibitem{dayanikRD}
{\sc S.~Dayanik}, {\em Optimal {Stopping} of {Linear} {Diffusions} with
  {Random} {Discounting}}, Mathematics of Operations Research, 33 (2008),
  pp.~645--661.

\bibitem{DM-C}
{\sc C.~Dellacherie and P.-A. Meyer}, {\em Probabilities and potential. {C}},
  vol.~151 of North-Holland Mathematics Studies, North-Holland Publishing Co.,
  Amsterdam, 1988.
\newblock Potential theory for discrete and continuous semigroups, Translated
  from the French by J. Norris.

\bibitem{Duncan71}
{\sc R.~Duncan}, {\em Integral representation of excessive functions of a
  {Markov} process.}, Pacific Journal of Mathematics, 39 (1971), pp.~125--144.

\bibitem{skewBM81}
{\sc J.~M. Harrison and L.~A. Shepp}, {\em On {Skew} {Brownian} {Motion}}, The
  Annals of Probability, 9 (1981), pp.~309--313.

\bibitem{IWmult}
{\sc K.~It\^{o} and S.~Watanabe}, {\em Transformation of {M}arkov processes by
  multiplicative functionals}, Ann. Inst. Fourier (Grenoble), 15 (1965),
  pp.~13--30.

\bibitem{MU}
{\sc A.~Mijatovi{\'c} and M.~Urusov}, {\em Convergence of integral functionals
  of one-dimensional diffusions}, Electron. Commun. Probab., 17 (2012), pp.~no.
  61, 13.

\bibitem{Pro}
{\sc P.~E. Protter}, {\em Stochastic integration and differential equations},
  vol.~21 of Stochastic Modelling and Applied Probability, Springer-Verlag,
  Berlin, 2005.
\newblock Second edition. Version 2.1, Corrected third printing.

\bibitem{Revuz70}
{\sc D.~Revuz}, {\em Mesures associ{\'e}es aux fonctionnelles additives de
  {M}arkov. {I}}, Transactions of the American Mathematical Society, 148
  (1970), pp.~501--531.

\bibitem{RY}
{\sc D.~Revuz and M.~Yor}, {\em Continuous martingales and {B}rownian motion},
  vol.~293 of Grundlehren der Mathematischen Wissenschaften [Fundamental
  Principles of Mathematical Sciences], Springer-Verlag, Berlin, third~ed.,
  1999.

\bibitem{rost_stopping71}
{\sc H.~Rost}, {\em The {Stopping} {Distributions} of a {Markov} {Process}.},
  Inventiones mathematicae, 14 (1971), pp.~1--16.

\bibitem{GTMP}
{\sc M.~Sharpe}, {\em General theory of {M}arkov processes}, vol.~133 of Pure
  and Applied Mathematics, Academic Press, Inc., Boston, MA, 1988.

\bibitem{Shiryaev}
{\sc A.~N. Shiryaev}, {\em Optimal stopping rules}, vol.~8, Springer Science \&
  Business Media, 2007.

\end{thebibliography}
\appendix
\section{Appendix}
 `\begin{proof}[Proof of Lemma \ref{l:mainR}] Proof will consider $v_c(x,y)=s(x \vee y)-s(c\vee y)$ and the other case is handled similarly.
 	\begin{enumerate}
 		\item  Note that one can choose $c$ such that $a<c<x$ without loss of generality. Then,
 		\bean
 		E^x[g(X_{T_{ab}})]&=& g(c) + \kappa (s(x)-s(c)) \int_{\ell}^r (E^x [s(X_{T_{ab}}\vee y)]-s(c\vee y))g(y)\mu_A(dy) \\
 		&=& g(c)+  \kappa (s(x)-s(c))+ \int_{\ell}^a (E^x s(X_{T_{ab}})-s(c))g(y)\mu_A(dy)\\
 		&&+\int_{a+}^b (E^x s(X_{T_{ab}}\vee y)-s(c\vee y))g(y)\mu_A(dy)\\
 		&=& g(c)+  \kappa (s(x)-s(c))+\int_{\ell}^{a} (s(x)-s(c))g(y)\mu_A(dy) \\
 		&&+\int_{a+}^b (E^x [s(X_{T_{ab}}\vee y)]-s(c\vee y))g(y)\mu_A(dy),
 		\eean
 		where the first equality is due to the fact that $s(z\vee y)=s(z)$ for $y<a$ and $s(z\vee y)=s(y)$ whenever $z\in (a,b)$. On the other hand, for $y \in [a,b]$,
 		\bean
 		E^x[s(X_{T_{ab}}\vee y)]&=&s(y) P^x(T_a<T_b)+ s(b) P^x(T_b<T_a)= s(y)\frac{s(b)-s(x)}{s(b)-s(a)}+ s(b) \frac{s(x)-s(a)}{s(b)-s(a)}\\
 		&=&s(x)+ \frac{(s(y)-s(a))(s(b)-s(x))}{s(b)-s(a)}.
 		\eean
 		Therefore,
 		\bean
 		E^x[g(X_{T_{ab}})]&=& g(c) + \kappa (s(x)-s(c))+\int_{\ell}^x (s(x\vee y)-s(c\vee y))g(y)\mu_A(dy)\nn \\
 		&& +\int_{x+}^b \left(s(x)-s(y)-\frac{(s(y)-s(a))(s(b)-s(x))}{s(b)-s(a)}\right)g(y)\mu_A(dy)\nn\\
 		&&\int_{a+}^x\frac{(s(y)-s(a))(s(b)-s(x))}{s(b)-s(a)}g(y)\mu_A(dy)\nn\\
 		&=&g(x)+ \int_{x+}^b \frac{(s(x)-s(a))(s(b)-s(y))}{s(b)-s(a)}g(y)\mu_A(dy)\nn\\
 		&&+\int_{a+}^x\frac{(s(y)-s(a))(s(b)-s(x))}{s(b)-s(a)}g(y)\mu_A(dy)\nn\\
 		&=&g(x)+ \int_a^b \frac{(s(x\wedge y)-s(a))(s(b)-s(x\vee y))}{s(b)-s(a)}g(y)\mu_A(dy),
 		\eean
 		where the second equality is due to (\ref{e:IEh}), the third follows from that $y \mapsto \frac{(s(y)-s(a))(s(b)-s(x))}{s(b)-s(a)}$ vanishes at $y=a$.
 		\item It is clear from the definition that $g$ is continuous on $\elr$. Thus, both $O^+$ and $O^-$ are open. Let $ x \in O^+$ and consider a neighborhood around $x$ with left endpoint $a$ and right endpoint $b$ such that $(a,b)\subset O^+$. Then, (\ref{e:EhabR}) yields
 		\[
 		E^x[g(X_{T_{ab}})]\geq g(x),
 		\]
 		as a consequence of the strict positivity of $g$ on $O^+$. This proves that $g$ is $s$-convex on $O^+$ since
 		\[
 		E^x[g(X_{T_{ab}})]= g(a)\frac{s(b)-s(x)}{s(b)-s(a)}+ g(b)\frac{s(x)-s(a)}{s(b)-s(a)}.
 		\]
 		The same technique can be used to prove $g$ is $s$-concave on $O^-$. 
 		\item Suppose $s(\ell)=-\infty$ and $v_c(x,y)= s(x\vee y)-s(c\vee y)$. First observe that the integrability assumption $\int_l^r |s(x\vee y)-s(c \vee y)||g(y)|\mu_A(dy)<\infty$ implies
 		\[
 		\int_l^{x} |g(y)|\mu_A(dy)<\infty
 		\]
 		for any $x \in \elr$. 
 		
 		Moreover, for any $a<x<b$,
 		\[
 		\frac{(s(x\wedge y)-s(a))(s(b)-s(x\vee y))}{s(b)-s(a)}\leq s(b)-s(x)
 		\]
 		for all $y \in (a,b)$. Thus, the dominated convergence theorem applied to (\ref{e:EhabR}) yields
 		\[
 		(s(b)-s(x))\lim_{a\rar \ell}\frac{g(a)}{s(b)-s(a)}+g(b)= \lim_{a\rar \ell}E^x[g(X_{T_{ab}})]=g(x)+\int_{\ell}^b (s(b)-s(x\vee y))g(y)\mu_A(dy).
 		\]
 		Note in particular that if $g$ is u.i. near $\ell$, $\lim_{a\rar \ell}E^x[g(X_{T_{ab}})]=g(b)$ and, consequently, $\lim_{a\rar \ell}\frac{g(a)}{s(b)-s(a)}=0$. 
 		
 		Thus,
 		\[
 		\lim_{a\rar \ell}\frac{g(a)}{s(b)-s(a)}=\lim_{x\rar \ell}\frac{g(x)}{s(b)-s(x)}+\lim_{x\rar \ell}\int_{\ell}^b \frac{s(b)-s(x\vee y)}{s(b)-s(x)}g(y)\mu_A(dy),
 		\]
 		which in turn yields 
 		\[
 		\lim_{x\rar \ell}\int_{\ell}^b \frac{s(b)-s(x\vee y)}{s(b)-s(x)}g(y)\mu_A(dy)=0.
 		\]
 		
 		On the other hand, (\ref{e:genelIE}) implies
 		\be \label{e:gblim}
 		0=\lim_{x\rar \ell}\frac{g(x)}{s(b)-s(x)}+ \kappa+ \lim_{x\rar \ell}\int_{\ell}^b \frac{s(b)-s(x\vee y)}{s(b)-s(x)}g(y)\mu_A(dy),
 		\ee
 		which establishes $\kappa=-\lim_{x\rar \ell}\frac{g(x)}{s(b)-s(x)}$. 
 		
 		The second assertion follows form the fact that $\lim_{a\rar \ell}\frac{g(a)}{s(b)-s(a)}=0$ when $g$ is u.i. near $\ell$ as observed above. 
 		
 		\item If $g$ changes its sign,  there exists a $c^* \in (\ell,r)$ such that either $g$ is decreasing, $s$-convex on $(\ell,c^*)$ and $s$-concave on $(c^*,r)$ or increasing, $s$-concave on $(\ell,c^*)$ and $s$-convex on $(c^*,r)$. Since $-g$ also solves (\ref{e:genelIE}), assume without loss of generality that the former case holds. Fix $c \in (\ell,c^*)$ and let $x \in (c,c^*)$ be arbitrary. Then, assuming $v_c(x,y)= s(x\vee y)-s(c\vee y)$ 
 		\[
 		g(x)= g(c) +  \int_{\ell}^x( s(x)-s(c\vee y))g(y)\mu_A(dy) \geq g(c)
 		\]
 		since $g$ is non-negative on $(\ell,c^*)$. This shows $g$ is increasing on $(l,c^*)$ yielding a contradiction. 
 		
 		Similarly, if $v_c(x,y)=s(c\wedge y)-s(x\wedge y)$, let $c^*<c<x$ and note that $g$ is nonpositive on $(c^*,r)$. Then,
 		\[
 		g(x)= g(c) +  \int_{x+}^r( s(c)-s(x \wedge y))g(y)\mu_A(dy) \geq g(c)
 		\]
 		contradicts that $g$ is decreasing.
 		
 		\item If $g \geq 0$ and $x>c$, then
 		\[
 		g(x)-\kappa s(x)=g(c)-\kappa s(c)+\int_{\ell}^x (s(x)-s(c\vee y))g(y)\mu_A(dy)\geq g(c)
 		\]
 		since $g\geq0$ and $s$ is increasing.
 		\item  Note that, for sufficiently small $h>0$ and  $x\in [\ell,r)$ such that $g(x)<\infty$,
 		\bean
 		g(x+h)-g(x)&=&\kappa (s(x+h)-s(x))+(s((x+h))-s(x))\int_{\ell}^x g(y)\mu_A(dy)\\
 		&&+\int_{x+}^{x+h}(s(x+h)-s(y))g(y)\mu_A(dy),
 		\eean
 		which in turn yields 
 		\[
 		\frac{d^+g(x)}{ds}=\kappa+ \int_{\ell}^x g(y)\mu_A(dy)
 		\]
 	since $g$ is continuous, $\mu_A$ is finite on any small neighbourhood around $x$ and does not charge $\{\ell\}$. 
 	
 	Similarly,
 	\[
 	g(x)-g(x-h)=\kappa(s(x)-s(x-h)) +(s(x)-s(x-h))\int_{\ell}^{x-h}g(y)\mu_A(dy) + \int_{(x-h)+}^{x}(s(x)-s(y))g(y)\mu_A(dy),
 	\]
 	and therefore 
 	\[
 	\frac{d^-g(x)}{ds}=\kappa + \int_{\ell}^{x-} g(y)\mu_A(dy).
 	\]
 	The other case for $v_c$ is handled in the same manner.
 	\end{enumerate}

 \end{proof}
\begin{proof}[Proof of Theorem \ref{t:main2}]
	\Implies{t:main:eq1}{t:main:eq2}:
	Suppose $s(\ell)>-\infty$. Then  $g(\ell)<\infty$ since $g$ is uniformly integrable near $\ell$.  Consider, as suggested above, $\tilde{g}(x)=g(x)-g(\ell)-\frac{d^+g(\ell)}{ds}(s(x)-s(\ell))$ and note that $\tilde{g}$ is  $s$-convex with $\tilde{g}(\ell)=\frac{d^+g(\ell+)}{ds}=0$. Moreover, $\tilde{g}(X)\exp(-B)$ is a $P^x$-local martingale for any $x \in (\ell,r)$, where
	\[
	dB_t=\frac{g(X_t)}{\tilde{g}(X_t)}dA_t.
	\]
	This in particular implies $\mu_B(dy)=\frac{g(y)}{\tilde{g}(y)}\mu_A(dy)$.  Moreover, thanks to Proposition \ref{p:hhp0},
	\[
	\tilde{g}(x)=\int_{\ell}^x (s(x)-s(y))\tilde{g}(y)\mu_B(dy)=\int_{\ell}^x (s(x)-s(y))g(y)\mu_A(dy).
	\]
	Thus,
	\[
	g(x)= g(\ell)+\frac{d^+g(\ell+)}{ds}(s(x)-s(\ell))+ \int_{\ell}^x (s(x)-s(y))g(y)\mu_A(dy),
	\]
	which implies (\ref{e:IEh_general}). 
	
	If $s(r)<\infty$ and $g$ is u.i. near $r$, then define $\tilde{g}(x):= g(x)-g(r) + \frac{d^-g(r-)}{ds}(s(r)-s(x))$ and proceed along the above lines to arrive at (\ref{e:IEh_general}).

	\Implies{t:main:eq2}{t:main:eq1}: Suppose $s(\ell)>-\infty$ and observe that $g(\ell)<\infty$ as a consequence. Moreover, in view of  (\ref{e:gsder})
	\[
	\frac{d^+g(\ell+)}{ds}= \kappa.
	\]
	Thus,  $\tilde{g}(x):=g(x)-g(\ell)-\frac{d^+g(\ell+)}{ds}(s(x)-s(\ell))$ is  non-negative and $s$-convex, and  there exists a CAF $B$ such that $\tilde{g}(X)-B$ is a $P^x$-local martingale for each $x \in \elr$. Proceeding as in the second part of the proof of Theorem \ref{t:main1} one obtains for $\ell <a<x<b <r$
	\[
	E^x[\tilde{g}(X_{T_{ab}})]=\tilde{g}(x)+ E^x[B_{T_{ab}}]=\tilde{g}(x)+ \int_a^b \frac{(s(x\wedge y)-s(a))(s(b)-s(x\vee y))}{s(b)-s(a)}\mu_B(dy).
	\]
	Next observe that 
	\[
	E^x[\tilde{g}(X_{T_{ab}})]-\tilde{g}(x)=E^x[g(X_{T_{ab}})]={g}(x)
	\]
	since $s(X)$ is a bounded martingale when stopped at $T_{ab}$. Combining this with (\ref{e:EhabR}) yields
	$\mu_B(dy)=g(y)\mu_A(dy)$ by the same argument in the proof of Theorem \ref{t:main1}. Consequently, $g(X)-\int_0^{\cdot}g(X_t)dA_t$ is a local martingale, which entails $g(X)\exp(-A)$ is a $P^x$-local martingale. Since $g$ is continuous and $g(\ell)<\infty$, $g$ is obviously u.i. near $\ell$.
	
	The case $s(r)<\infty$ is handled in the same manner.
\end{proof}
\end{document}